\newtheorem{proposition}{Proposition}
\newtheorem{lemma}{Lemma}
\newtheorem{theorem}{Theorem}
\theoremstyle{remark}\newtheorem{remark}{Remark}
 \newcommand{\RR}{\mathbb{R}}
 \newcommand{\F}{\mathcal{F}}
 \newcommand{\B}{\mathcal{B}}
 \newcommand{\C}{\mathcal{C}}
\newcommand{\be}{\begin{equation}}
\newcommand{\ee}{\end{equation}}
\begin{document}

\title{Structure preserving schemes for nonlinear Fokker-Planck equations and applications}
\author{	Lorenzo Pareschi \thanks{Department of Mathematics and Computer Sciences, University of Ferrara, Via N. Machiavelli 35, 44121 Ferrara, Italy.
			\texttt{lorenzo.pareschi@unife.it}} \and
		Mattia Zanella\thanks{Department of Mathematical Sciences ``G. L. Lagrange'', Politecnico di Torino, Corso Duca degli Abruzzi 24, 10129 Turin, Italy.
			\texttt{mattia.zanella@polito.it}}
		}
\date{}

\maketitle

\begin{abstract}
In this paper we focus on the construction of numerical schemes for nonlinear Fokker-Planck equations that preserve the structural properties, like non negativity of the solution, entropy dissipation and large time behavior. The methods here developed are second order accurate, they do not require any restriction on the mesh size and are capable to capture the asymptotic steady states with arbitrary accuracy. These properties are essential for a correct description of the underlying physical problem. Applications of the schemes to several nonlinear Fokker-Planck equations with nonlocal terms describing emerging collective behavior in socio-economic and life sciences are presented. 

\noindent{\bf Keywords:} structure preserving methods, finite difference schemes, Fokker-Planck equations, emerging collective behavior.\\

\end{abstract}

\section{Introduction}

In this paper we construct and discuss a steady-state preserving method for a wide class of nonlinear Fokker-Planck equations of the form 
\begin{equation}\begin{cases}\label{eq:NAD}
\partial_t f(w,t) = \nabla_w \cdot \Big[ \B[f](w,t)f(w,t) + \nabla_w (D(w)f(w,t)) \Big],\\
f(w,0)=f_0(w),
\end{cases}\end{equation}
where $t\ge 0$, $w\in \Omega\subseteq \RR^d$, $d\ge 1$, $f(w,t)\ge 0$ is the unknown distribution function, $\B[\cdot]$ is a bounded operator which describes aggregation dynamics and $D(\cdot)\ge0$ is a diffusion function. 

A typical example is given by mean-field models of collective behavior where the nonlocal operator $\B[\cdot]$ has the form
 \be\label{eq:B_collective}
 \B[f](w,t) = S(w) + \int_{\RR^d} P(w,w_*)(w-w_*)f(w_*,t)dw_*,
 \ee
 with $P:\RR^{d\times d }\rightarrow \RR^+$ and $S:\RR^d\rightarrow \RR^d $. With the choice \eqref{eq:B_collective} equation \eqref{eq:NAD} describes typical features of the collective behavior in multiagent systems with nonlocal type interactions. These models of collective behavior has been extensively discussed in the last decades at the particle, kinetic and hydrodynamic level \cite{APTZ,APZc,BarDeg,BCCD,CCH,CChoH,CFRT,CS,DOCBC,T}. In particular, many heterogeneous phenomena like swarming behaviors, human crowds motion and formation of wealth distributions are described by these type of PDEs under special assumptions. We refer to \cite{NPT,PT2}, and the references therein, for a recent overview of such models.
 
In the following, we focus on the construction of numerical methods for such problems which are able to preserve the structural properties of the PDE, like non negativity of the solution, entropy dissipation and large time behavior. The methods here developed are second order accurate, they do not require any restriction on the mesh size and are capable to capture the asymptotic steady states with arbitrary accuracy. These properties are essential for a correct description of the underlying physical problem.

The derivation of the schemes follows the main lines of the seminal work of Chang--Cooper for the linear Fokker-Planck equation \cite{BuetDellacherie2010,ChCo,MB,SG}.
However, in the nonlinear case, the exact stationary solution is unknown and a more advanced treatment is needed in order to find a good approximation to the problem. Similar approaches for nonlinear Fokker-Planck equations were previously derived in \cite{BCDS,LLPS}. Related methods for the case of nonlinear degenerate diffusions equations were proposed in \cite{BCF,CJS} and with nonlocal terms in \cite{BCW,CCH}. We refer also to \cite{AlbiPareschi2013ab} for the development of methods based on stochastic approximations and to \cite{Gosse} for a recent survey on schemes which preserve steady states of balance laws and related problems.

Although we derive the schemes in the case of Fokker-Planck equations, the methods can be easily applied to more general problems where the solution depends on additional parameters and the PDE is of Vlasov-Fokker-Planck type. In this case, preservation of the steady states is of fundamental importance in order to develop asymptotic-preserving methods \cite{DP15}.

The rest of the paper is organized as follows. In the next Section we first derive the Chang-Cooper type schemes in one-dimension with a particular attention to the steady state preserving properties. We then prove non negativity of solutions for explicit and semi-implicit schemes and entropy inequality for a class of one dimensional Fokker-Planck models. In Section 3 we introduce a modification of the schemes based on a more general entropy dissipation principle. We show that these entropic schemes preserve stationary solutions and derive sufficient conditions for non negativity of explicit and semi-implicit schemes. Several applications of the schemes are finally presented in Section 4 for various nonlinear Fokker-Planck problems describing collective behaviors in socio-economic and life sciences. Some conclusions are reported at the end of the manuscript. Extension to high order semi-implicit methods and the multi-dimensional case are considered in separate appendices.

\section{Chang-Cooper type schemes}\label{sec:3}
In the following we focus on the design of numerical schemes for \eqref{eq:NAD} which we rewrite in divergence form as
\be\label{eq:NAD2}
\partial_t f(w,t) = \nabla_w \cdot [(\B[f](w,t)+\nabla_w D(w))f(w,t)+D(w)\nabla_w f(w,t)].
\ee
We can define the $d$-dimensional flux 
\[
\F[f](w,t) = (\B[f](w,t)+\nabla_w D(w))f(w,t)+D(w)\nabla_w f(w,t),
\]
therefore \eqref{eq:NAD2} reads
\be\label{eq:NAD_dim}
\partial_t f(w,t) = \nabla_w \cdot \F(w,t).
\ee

\subsection{Derivation of the schemes}
In the one-dimensional case $d=1$ equation \eqref{eq:NAD_dim} becomes
\be\label{eq:FP_flux}
\partial_t f(w,t) = \partial_w \F[f](w,t),\quad w\in {\Omega} \subseteq \RR,
\ee
where now
\be
\label{eq:flux}
\F[f](w,t) = ( \B[f](w,t)+D'(w)) f(w,t)+D(w)\partial_w f(w,t)
\ee
using the compact notation $D'(w)=\partial_w D(w)$. Typically, when ${\Omega}$ is a finite size set the problem is complemented with no-flux boundary conditions at the extremal points. In the sequel we assume $D(w) \geq \alpha > 0$ in the internal points of $\Omega$.

We introduce an uniform spatial grid $w_i \in \Omega$, such that $w_{i+1}-w_i=\Delta w$. We denote as usual $w_{i\pm 1/2}=w_i\pm \Delta w/2$ and 
consider the conservative discretization
\be\label{eq:dflux}
\frac{d}{dt}f_i(t) = \dfrac{\F_{i+1/2}(t)-\F_{i-1/2}(t)}{\Delta w},
\ee
where for each $t\ge 0$, $f_i(t)$ is an approximation of $f(w_i,t)$  and $\F_{i\pm 1/2}[f](t)$ is the numerical flux function characterizing the  discretization. 

Let us set ${\C}[f](w,t)=\B[f](w,t)+D'(w)$ and adopt the notations $D_{i+1/2}=D(w_{i+1/2})$, $D'_{i+1/2}=D'(w_{i+1/2})$. We will consider a general flux function which is combination of the grid points $i+1$ and $i$ 
\be\begin{split}\label{eq:CC_flux}
\F_{i+ 1/2} = \tilde{\C}_{i+1/2}\tilde{f}_{i+1/2}+D_{i+1/2}\dfrac{f_{i+1}-f_i}{\Delta w},
\end{split}\ee
where 
\be\label{eq:f_CC}
\tilde{f}_{i+1/2}=(1-\delta_{i+1/2})f_{i+1}+\delta_{i+1/2}f_i.
\ee
Here, we aim at deriving suitable expressions for $\delta_{i+1/2}$ and $\tilde{\C}_{i+1/2}$ in such a way that the method yields nonnegative solutions, without restriction on $\Delta w$, and preserves the steady state of the system with arbitrary order of accuracy.

For example, the standard approach based on central difference is obtained by considering for all $i$ the quantities 
 \[
 \delta_{i+1/2}=1/2, \qquad \tilde{\C}_{i+1/2}={\C}[f](w_{i+1/2},t).
 \]
 It is well-known, however, that such a discretization method is subject to restrictive conditions over the mesh size $\Delta w$ in order to keep non negativity of the solution.  

First, observe that when the numerical flux vanishes from \eqref{eq:CC_flux} we get 
\[
\dfrac{f_{i+1}}{f_i} = \dfrac{-\delta_{i+1/2}\tilde{\C}_{i+1/2}+\dfrac{D_{i+1/2}}{\Delta w}}{(1-\delta_{i+1/2})\tilde{\C}_{i+1/2}+\dfrac{D_{i+1/2}}{\Delta w}}.
\]
Similarly, if we consider the analytical flux imposing ${\cal F}[f](w,t) \equiv 0$, we have
\be\label{eq:FP_steady}
D(w)\partial_w f(w,t) = -(\B[f](w,t)+D'(w))f(w,t),
\ee
which is in general not solvable, except in some special cases due to the nonlinearity on the right hand side. We may overcome this difficulty in the quasi steady-state approximation integrating equation \eqref{eq:FP_steady} on the cell $[w_i,w_{i+1}]$ 
\[
\int_{w_i}^{w_{i+1}}\dfrac{1}{f(w,t)}\partial_w f(w,t)dw = -\int_{w_i}^{w_{i+1}}\dfrac{1}{D(w)}(\B[f](w,t)+D'(w))dw,
\]
which gives
\be\label{eq:quasi_SS}
\dfrac{f(w_{i+1},t)}{f(w_i,t)} = \exp \left\{ -\int_{w_i}^{w_{i+1}}\dfrac{1}{D(w)}(\B[f](w,t)+D'(w))dw  \right\}.
\ee
The terminology quasi steady-state was introduced originally by Chang and Cooper in \cite{ChCo}, the "quasi" coming from the fact that, with general time-dependent coefficients, no time-stabilization can be expected.

Now, by equating the ratio $f_{i+1}/f_i$ and $f(w_{i+1},t)/f(w_i,t)$ of the numerical and exact flux, and setting
\be\label{eq:B_tilde}
\tilde{\C}_{i+1/2}=\dfrac{D_{i+1/2}}{\Delta w}\int_{w_i}^{w_{i+1}}\dfrac{\B[f](w,t)+D'(w)}{D(w)}dw
\ee
we recover
\be\label{eq:delta}
\delta_{i+1/2} = \dfrac{1}{\lambda_{i+1/2}}+\dfrac{1}{1-\exp(\lambda_{i+1/2})}, 
\ee
where
\be\label{eq:lambda_high}
\lambda_{i+1/2}=\int_{w_i}^{w_{i+1}}\dfrac{\B[f](w,t)+D'(w)}{D(w)}dw=\frac{\Delta w\,\tilde{\C}_{i+1/2}}{D_{i+1/2}}.
\ee
We can state the following 
\begin{proposition}
The numerical flux function \eqref{eq:CC_flux}-\eqref{eq:f_CC} with $\tilde{\C}_{i+1/2}$ and $\delta_{i+1/2}$ defined by \eqref{eq:B_tilde} and \eqref{eq:delta}-\eqref{eq:lambda_high} vanishes when the corresponding flux (\ref{eq:flux}) is equal to zero over the cell $[w_i,w_{i+1}]$. Moreover the nonlinear weight functions $\delta_{i+1/2}$ defined by \eqref{eq:delta}-\eqref{eq:lambda_high} are such that $\delta_{i+1/2} \in (0,1)$.  
\end{proposition}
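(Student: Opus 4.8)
The plan is to verify the two assertions separately, exploiting the algebra that produced the definitions \eqref{eq:B_tilde}--\eqref{eq:lambda_high}. First I would establish that the numerical flux vanishes. By construction, the numerical flux \eqref{eq:CC_flux} equals zero precisely when
\[
\frac{f_{i+1}}{f_i} = \frac{-\delta_{i+1/2}\tilde{\C}_{i+1/2}+D_{i+1/2}/\Delta w}{(1-\delta_{i+1/2})\tilde{\C}_{i+1/2}+D_{i+1/2}/\Delta w},
\]
as already recorded in the excerpt. On the other hand, the analytical flux \eqref{eq:flux} vanishing over the cell gives, via \eqref{eq:quasi_SS}, the ratio $f(w_{i+1},t)/f(w_i,t) = \exp(-\lambda_{i+1/2})$ with $\lambda_{i+1/2}$ as in \eqref{eq:lambda_high}. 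So the task reduces to checking the identity
\[
\frac{-\delta_{i+1/2}\tilde{\C}_{i+1/2}+D_{i+1/2}/\Delta w}{(1-\delta_{i+1/2})\tilde{\C}_{i+1/2}+D_{i+1/2}/\Delta w} = e^{-\lambda_{i+1/2}}.
\]
Using $\tilde{\C}_{i+1/2} = \lambda_{i+1/2} D_{i+1/2}/\Delta w$ from \eqref{eq:lambda_high}, both numerator and denominator factor through $D_{i+1/2}/\Delta w$, leaving $(1-\delta\lambda)/(1+(1-\delta)\lambda) = e^{-\lambda}$ (dropping subscripts). Substituting the definition \eqref{eq:delta}, $\delta = 1/\lambda + 1/(1-e^{\lambda})$, one computes $\delta\lambda = 1 + \lambda/(1-e^\lambda)$ and $(1-\delta)\lambda = -\lambda/(1-e^\lambda) - 1 + \lambda = \lambda e^\lambda/(e^\lambda-1) - 1$; plugging in and simplifying the resulting rational expression in $\lambda$ and $e^\lambda$ yields $e^{-\lambda}$. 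This is a short, routine manipulation; the only care needed is the bookkeeping of signs.

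Next I would prove $\delta_{i+1/2}\in(0,1)$. Fixing $\lambda = \lambda_{i+1/2}\in\RR\setminus\{0\}$, define $g(\lambda) = 1/\lambda + 1/(1-e^\lambda)$. The function $g$ extends continuously to $\lambda = 0$ with $g(0) = 1/2$ (expand $e^\lambda = 1 + \lambda + \lambda^2/2 + \cdots$ so $1-e^\lambda = -\lambda(1 + \lambda/2 + \cdots)$ and $1/(1-e^\lambda) = -1/\lambda + 1/2 + O(\lambda)$). To see $0 < g < 1$ for all real $\lambda$, I would show $g$ is strictly monotone and identify its limits: as $\lambda\to+\infty$, $1/\lambda\to 0$ and $1/(1-e^\lambda)\to 0^-$, and more precisely $g(\lambda)\to 0^+$ after checking the sign; as $\lambda\to-\infty$, $1/\lambda\to 0^-$ and $1/(1-e^\lambda)\to 1$, so $g(\lambda)\to 1^-$. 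Monotonicity can be obtained either by differentiating $g$ and showing $g'<0$, or, more cleanly, by writing $g(\lambda) = \frac{1}{\lambda} - \frac{1}{e^\lambda - 1}$ and recognizing this as (a shifted form of) the generating function of the Bernoulli numbers, which is known to be strictly decreasing on $\RR$; alternatively one checks $g(\lambda) + g(-\lambda) = 1$ (a direct computation: $1/(1-e^\lambda) + 1/(1-e^{-\lambda}) = 1$), which together with the limit at $+\infty$ and continuity pins down the range.

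The main obstacle, such as it is, lies in the $\delta\in(0,1)$ part: one must handle $\lambda$ of either sign and the removable singularity at $\lambda=0$, and the cleanest argument is the symmetry identity $g(\lambda)+g(-\lambda)=1$ combined with a single one-sided limit, rather than a brute-force derivative sign analysis. Everything else is elementary algebra driven by the construction. I would therefore organize the proof as: (i) the flux-vanishing identity via direct substitution, then (ii) the symmetry relation $g(\lambda) + g(-\lambda) = 1$, (iii) continuity and the value $g(0)=1/2$, (iv) the limit $g(\lambda)\to 0^+$ as $\lambda\to+\infty$ (hence $g\to 1^-$ as $\lambda\to -\infty$ by symmetry), and (v) monotonicity to conclude $g$ maps $\RR$ onto $(0,1)$, giving $\delta_{i+1/2}\in(0,1)$.
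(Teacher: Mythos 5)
Your proposal is correct. The first half (flux vanishing) is exactly the paper's argument: the weights were derived precisely by equating the numerical ratio $f_{i+1}/f_i$ with the quasi-stationary ratio \eqref{eq:quasi_SS}, so your substitution check $\bigl(1-\delta\lambda\bigr)/\bigl(1+(1-\delta)\lambda\bigr)=e^{-\lambda}$ is the same computation run in the verification direction, and your algebra there is right. Where you genuinely diverge is the bound $\delta_{i+1/2}\in(0,1)$: the paper disposes of it in one line, observing that it follows from the elementary inequality $\exp(x)\ge 1+x$ — applied with $x=\lambda$ this gives $\delta=1/\lambda-1/(e^{\lambda}-1)>0$ for both signs of $\lambda$, and applied with $x=-\lambda$ (equivalently, via the symmetry $\delta(\lambda)+\delta(-\lambda)=1$ that you yourself record) it gives $\delta<1$. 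Your route instead goes through continuity at the removable singularity, the limits at $\pm\infty$, and strict monotonicity of $g(\lambda)=1/\lambda+1/(1-e^{\lambda})$; this works, but the monotonicity step is the expensive part — showing $g'<0$ reduces to $\lambda^2 e^{\lambda}<(e^{\lambda}-1)^2$, i.e.\ $\sinh(\lambda/2)>\lambda/2$, and the appeal to the Bernoulli generating function is not quite literal since $g$ is not a shift of $\lambda/(e^{\lambda}-1)$. What your approach buys is extra information (that $\delta$ is a strictly decreasing bijection of $\RR$ onto $(0,1)$, interpolating between the upwind values $0$ and $1$, which is exactly the limit behavior noted in Remark \ref{rem:grad}); what the paper's one-inequality argument buys is brevity. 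Note also that combining your own symmetry identity with positivity from $e^{\lambda}\ge 1+\lambda$ would let you skip the limits and monotonicity entirely.
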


The latter result follows from the simple inequality $\exp(x) \geq 1 + x$. We refer to this type of schemes as structure preserving Chang-Cooper (SP-CC) type schemes.

By discretizing \eqref{eq:lambda_high} through the midpoint rule 
\[
\int_{w_i}^{w_{i+1}}\dfrac{\B[f](w,t)+D'(w)}{D(w)}dw = \dfrac{\Delta w(\B_{i+1/2}+D'_{i+1/2})}{D_{i+1/2}}+O(\Delta w^3),
\]
we obtain the second order method defined by 
\be\label{eq:lambdamid}
\lambda_{i+1/2}^{\textrm{mid}} = \dfrac{\Delta w(\B_{i+1/2}+ D'_{i+1/2})}{D_{i+1/2}}
\ee
and 
\be
\label{eq:deltamid}
\delta_{i+1/2}^{\textrm{mid}} = \dfrac{D_{i+1/2}}{\Delta w(\B_{i+1/2}+ D'_{i+1/2})}+\dfrac{1}{1-\exp(\lambda_{i+1/2}^{\textrm{mid}})}.
\ee
Higher order accuracy of the steady state solution can be obtained using suitable higher order quadrature formulas for the integral \eqref{eq:B_tilde}. We refer to Section \ref{sec:applications} for examples and more details. For linear problems of the form $\B[f](w,t)=\B(w)$ with constant diffusion $D'=0$, the above scheme (\ref{eq:lambdamid})-(\ref{eq:deltamid}) is usually referred to as the Chang-Cooper method \cite{ChCo,MB}. In particular, if $\B(w)$ is a first order polynomial in $w$ as in \cite{BuetDellacherie2010} the midpoint rule is equivalent to the exact evaluation of the integral (\ref{eq:B_tilde}).

\begin{remark}\label{rem:grad}~
\begin{itemize}
\item 
If we consider the limit case $D_{i+1/2}\to 0$, $D'_{i+1/2}\to 0$ in \eqref{eq:lambdamid}-\eqref{eq:deltamid} we obtain the weights
\[
\delta_{i+1/2}=
\left\{
\begin{array}{cc}
 0, & \B_{i+1/2}>0,  \\
 1, & \B_{i+1/2}<0  \\
\end{array}
\right.
\]
and the scheme locally reduces to a first order upwind scheme for the corresponding continuity equation.
\item
For linear problems of the form $\B[f](w,t)=\B(w)$ the exact stationary state $f^{\infty}(w)$ can be directly computed from the solution of
\be\label{eq:FP_steady2}
D(w)\partial_w f^{\infty}(w) = -(\B(w)+D'(w))f^{\infty}(w),
\ee
together with the boundary conditions. Explicit examples of stationary states will be reported in Section \ref{sec:applications}.

Using the knowledge of the stationary state we have 
\be\label{eq:SS}
\dfrac{f^{\infty}_{i+1}}{f^{\infty}_i} = \exp \left\{ -\int_{w_i}^{w_{i+1}}\dfrac{1}{D(w)}(\B(w)+D'(w))dw  \right\}=\exp \left(-\lambda^{\infty}_{i+1/2} \right),
\ee 
therefore 
\be
\label{eq:lambda_inf}
\lambda^{\infty}_{i+1/2}=\log \left(\frac{f_{i}^{\infty}}{f_{i+1}^\infty}\right)\ee
and
\be\label{eq:delta_inf}
\delta^{\infty}_{i+1/2} = \dfrac{1}{\log(f_i^{\infty})-\log(f_{i+1}^{\infty})}+\dfrac{f_{i+1}^{\infty}}{f_{i+1}^{\infty}-f_{i}^{\infty}}. 
\ee
In this case, the numerical scheme preserves the steady state exactly. Finally, in Table \ref{tb:1} we summarize the different expressions of the weight functions. 

\end{itemize}
\end{remark}

\begin{table}[t]
\caption{Different expressions of the weights in (\ref{eq:f_CC})}
\begin{center}
\begin{tabular}{l|cc}
\hline
\hline\\[-.3cm]
{\bf Scheme} & $\delta_{i+1/2}$ & $\lambda_{i+1/2}$\\[+.2cm]
\hline
\hline\\[-.3cm]
SP-CC & $\dfrac{1}{\lambda_{i+1/2}}+\dfrac{1}{1-\exp(\lambda_{i+1/2})}$ & $\displaystyle\int_{w_i}^{w_{i+1}}\dfrac{\B[f](w,t)+D'(w)}{D(w)}dw$\\[+.4cm]
\hline\\[-.3cm]
SP-CC$_2$ (midpoint) & $\dfrac{1}{\lambda_{i+1/2}}+\dfrac{1}{1-\exp(\lambda_{i+1/2})}$ & $\displaystyle\dfrac{\Delta w(\B_{i+1/2}+ D'_{i+1/2})}{D_{i+1/2}}$\\[+.4cm]
\hline\\[-.3cm]
SP-CC$_E$ (exact) & $\dfrac{1}{\log(f_i^{\infty})-\log(f_{i+1}^{\infty})}+\dfrac{f_{i+1}^{\infty}}{f_{i+1}^{\infty}-f_{i}^{\infty}}$ & $\displaystyle\log \left(\frac{f_{i}^{\infty}}{f_{i+1}^\infty}\right)$\\[+.4cm]
\hline
\end{tabular}
\end{center}
\label{tb:1}
\end{table}%

\subsection{Main properties}
In order to study the structural properties of the numerical schemes, like conservations, non negativity and entropy property, we restrict to the one-dimensional case. To start with we consider the following simple result. 
\begin{lemma}
Let us consider the scheme \eqref{eq:dflux}-\eqref{eq:CC_flux} for $i=0,\ldots,N$ with no flux boundary conditions $\F_{N+1/2}=\F_{-1/2}=0$. We have 
\[
\sum_{i=0}^N \frac{d}{dt}f_i(t) = 0,\quad \forall\, t>0.
\]
\end{lemma}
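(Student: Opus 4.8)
The plan is to exploit the telescoping structure of the conservative discretization \eqref{eq:dflux}. Summing the ODEs over the grid points $i=0,\ldots,N$ gives
\[
\sum_{i=0}^N \frac{d}{dt}f_i(t) = \frac{1}{\Delta w}\sum_{i=0}^N \left(\F_{i+1/2}(t)-\F_{i-1/2}(t)\right),
\]
and the right-hand side is a telescoping sum. First I would reindex the second piece, writing $\sum_{i=0}^N \F_{i-1/2} = \sum_{j=-1}^{N-1}\F_{j+1/2}$, so that almost all interior flux terms $\F_{1/2},\ldots,\F_{N-1/2}$ cancel pairwise, leaving only the boundary contributions $\F_{N+1/2}(t)-\F_{-1/2}(t)$.

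Then I would invoke the no-flux boundary conditions $\F_{N+1/2}=\F_{-1/2}=0$ to conclude that the remaining boundary terms vanish, giving $\sum_{i=0}^N \frac{d}{dt}f_i(t)=0$ for all $t>0$. Interchanging the finite sum with the time derivative is trivially justified since the sum is finite, so one also gets $\frac{d}{dt}\sum_{i=0}^N f_i(t)=0$, i.e.\ the discrete total mass $\sum_i f_i(t)$ is conserved in time.

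There is essentially no obstacle here: the result is a direct consequence of the conservative (flux-difference) form of the scheme together with the homogeneous boundary fluxes, and does not use the specific Chang-Cooper expressions for $\tilde{\C}_{i+1/2}$ or $\delta_{i+1/2}$ at all. The only point worth stating carefully is the bookkeeping of the telescoping sum and the fact that the two surviving terms are exactly the prescribed boundary fluxes. I would therefore keep the proof to two or three lines, perhaps remarking that the identity holds for any numerical flux $\F_{i+1/2}$ consistent with the conservative form, which is why it is stated as a preliminary lemma before the more delicate non-negativity and entropy properties.
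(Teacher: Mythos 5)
Your proof is correct and is exactly the argument the paper has in mind: the paper dismisses the lemma in one line as "a simple consequence of the telescopic summation property and the no flux boundary conditions," which is precisely your telescoping-plus-boundary-flux computation. Your additional remarks (reindexing bookkeeping, independence from the specific Chang--Cooper weights) are accurate but do not change the route.
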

The proof is a simple consequence of the telescopic summation property and the no flux boundary conditions.

\subsubsection{Positivity preservation}
Concerning non negativity, first we prove a result for the explicit scheme. We introduce a time discretization $t^n=n\Delta t$ with $\Delta t>0$ and $n = 0,\dots,T$ and consider the simple forward Euler method
\be\label{eq:NAD_dimd}
f^{n+1}_i=f^n_i + \Delta t \dfrac{\F_{i+1/2}^n-\F_{i-1/2}^n}{\Delta w}.
\ee  
\begin{proposition}
Under the time step restriction 
\be
\Delta t\le \dfrac{\Delta w^2}{2(M\Delta w+D)},\quad
M = \max_{i} |\tilde{\C}_{i+1/2}^n|,\quad D = \max_{i} {D}_{i+1/2}, 
\label{eq:nu}
\ee
the explicit scheme \eqref{eq:NAD_dimd} with flux defined by \eqref{eq:delta}-\eqref{eq:lambda_high} preserves nonnegativity, i.e 
$ f^{n+1}_i\ge 0$ if $f^n_i\ge 0$, $i=0,\dots,N$. 
\end{proposition}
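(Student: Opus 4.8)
\emph{Sketch of the intended proof.} The plan is to insert the flux \eqref{eq:CC_flux}--\eqref{eq:f_CC} into the forward Euler update \eqref{eq:NAD_dimd} and exhibit $f_i^{n+1}$ as a linear combination of $f_{i-1}^n$, $f_i^n$, $f_{i+1}^n$ with nonnegative coefficients; nonnegativity of $f^{n+1}$ then follows at once from that of $f^n$. First I would rewrite the numerical flux in two-point form
\[
\F_{i+1/2} = a_{i+1/2}^+ f_{i+1} - a_{i+1/2}^- f_i,\qquad
a_{i+1/2}^+ = \tilde{\C}_{i+1/2}(1-\delta_{i+1/2})+\frac{D_{i+1/2}}{\Delta w},\quad
a_{i+1/2}^- = \frac{D_{i+1/2}}{\Delta w}-\tilde{\C}_{i+1/2}\delta_{i+1/2},
\]
and check that both weights are nonnegative. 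Using the relation $\tilde{\C}_{i+1/2}=\lambda_{i+1/2}D_{i+1/2}/\Delta w$ from \eqref{eq:lambda_high} together with the explicit form \eqref{eq:delta} of $\delta_{i+1/2}$, a short algebraic simplification gives
\[
a_{i+1/2}^+ = \frac{D_{i+1/2}}{\Delta w}\cdot\frac{\lambda_{i+1/2}\,e^{\lambda_{i+1/2}}}{e^{\lambda_{i+1/2}}-1},\qquad
a_{i+1/2}^- = \frac{D_{i+1/2}}{\Delta w}\cdot\frac{\lambda_{i+1/2}}{e^{\lambda_{i+1/2}}-1}.
\]
Since $\lambda$ and $e^{\lambda}-1$ always carry the same sign (a case split on $\lambda>0$, $\lambda<0$, and the limit $\lambda\to 0$), and $D_{i+1/2}\ge\alpha>0$, both $a_{i+1/2}^{\pm}\ge 0$.

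Next I would substitute into \eqref{eq:NAD_dimd} and use the telescoped flux difference $\F_{i+1/2}-\F_{i-1/2} = a_{i+1/2}^+ f_{i+1} - (a_{i+1/2}^-+a_{i-1/2}^+)f_i + a_{i-1/2}^- f_{i-1}$ to obtain
\[
f_i^{n+1} = \frac{\Delta t}{\Delta w}\,a_{i+1/2}^+ f_{i+1}^n + \Big(1-\frac{\Delta t}{\Delta w}\big(a_{i+1/2}^-+a_{i-1/2}^+\big)\Big) f_i^n + \frac{\Delta t}{\Delta w}\,a_{i-1/2}^- f_{i-1}^n .
\]
By the previous step the coefficients of $f_{i\pm1}^n$ are nonnegative with no restriction on $\Delta t$ (at $i=0,N$ one of them simply disappears because of the no-flux conditions $\F_{-1/2}=\F_{N+1/2}=0$, which only helps). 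It then remains to make the diagonal coefficient nonnegative. Here I would invoke the bound $\delta_{i+1/2}\in(0,1)$ established in the Proposition above, together with $|\tilde{\C}_{i+1/2}|\le M$ and $D_{i+1/2}\le D$, to estimate
\[
a_{i+1/2}^- = \frac{D_{i+1/2}}{\Delta w}-\tilde{\C}_{i+1/2}\delta_{i+1/2}\le \frac{D}{\Delta w}+M,\qquad
a_{i-1/2}^+ = \tilde{\C}_{i-1/2}(1-\delta_{i-1/2})+\frac{D_{i-1/2}}{\Delta w}\le M+\frac{D}{\Delta w},
\]
so that $a_{i+1/2}^-+a_{i-1/2}^+\le 2(M\Delta w+D)/\Delta w$ and the diagonal coefficient is bounded below by $1-2\Delta t(M\Delta w+D)/\Delta w^2$, which is nonnegative precisely under \eqref{eq:nu}. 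Hence $f_i^{n+1}$ is a nonnegative linear combination of $f_{i-1}^n$, $f_i^n$, $f_{i+1}^n$, and nonnegativity is preserved.

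The routine parts are the two simplifications of $a_{i+1/2}^{\pm}$ and the telescoping of the flux difference; the only point that needs genuine care is the sign analysis of the exponential weights $a_{i+1/2}^{\pm}$. This is exactly where the Chang--Cooper choice of $\delta_{i+1/2}$ pays off: it forces the two off-diagonal couplings to be nonnegative \emph{unconditionally}, so that the mesh-size-free condition \eqref{eq:nu} is required only to keep the diagonal entry nonnegative. I would expect the write-up to be only a few lines once these observations are in place.
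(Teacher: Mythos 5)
Your proposal is correct and follows essentially the same route as the paper: write the forward Euler update as a three-point combination, observe that the Chang--Cooper weights make the off-diagonal coefficients $a_{i\pm 1/2}^{\pm}$ nonnegative unconditionally (the sign argument on $\lambda/(e^{\lambda}-1)$ and $\lambda e^{\lambda}/(e^{\lambda}-1)$ is exactly the paper's ``properties of the exponential function'' step), and then impose \eqref{eq:nu} only to keep the diagonal coefficient nonnegative, using $\delta\in(0,1)$, $|\tilde{\C}|\le M$, $D_{i\pm1/2}\le D$. Your closed-form expressions for $a_{i+1/2}^{\pm}$ and the final estimate $a_{i+1/2}^-+a_{i-1/2}^+\le 2(M\Delta w+D)/\Delta w$ reproduce the paper's bound $\Delta t\le\Delta w/\nu$, so no gap remains.
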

\begin{proof}
The scheme reads
\be\begin{split}\label{eq:prop1_f}
& f_i^{n+1} = f_i^n + \dfrac{\Delta t}{\Delta w}\Bigg[ \left( (1-\delta_{i+1/2}^n)\tilde{\C}_{i+1/2}^{n}+\dfrac{D_{i+1/2}}{\Delta w} \right)f_{i+1}^n\\
& +\left(\tilde{\C}_{i+1/2}^n\delta_{i+1/2}^n-\tilde{\C}_{i-1/2}^n(1-\delta_{i-1/2}^n)-\dfrac{1}{\Delta w}(D_{i+1/2}+D_{i-1/2})\right)f_i^n \\
&- \left(\tilde{\C}^n_{i-1/2}\delta_{i-1/2}^n-\dfrac{D_{i-1/2}}{\Delta w}\right)f_{i-1}^n \Bigg].
\end{split}\ee
From \eqref{eq:prop1_f} we have a convex combination if the coefficients of $f_{i+1}^n$ and $f_{i-1}^n$  satisfy
\[
\begin{split}
(1-\delta_{i+1/2})\tilde{\C}_{i+1/2}^n+\dfrac{D_{i+1/2}}{\Delta w}\ge 0, \qquad
-\delta_{i-1/2}\tilde{\C}_{i-1/2}^n+\dfrac{D_{i-1/2}}{\Delta w}\ge 0,
\end{split}
\]
or equivalently 
\[
\begin{split}
\lambda_{i+1/2}\left(1-\dfrac{1}{1-\exp{(\lambda_{i+1/2})}}\right)\ge 0, \qquad \dfrac{\lambda_{i-1/2}}{\exp{(\lambda_{i-1/2})}-1}\ge 0,
\end{split}
\]
which holds true thanks to the properties of the exponential function. In order to ensure the non negativity of the scheme the time step should satisfy the restriction $\Delta t\le {\Delta w}/{\nu}$, with 
\[
\nu = \max_{0\le i\le N} \Big\{ -\tilde{\C}_{i+1/2}^n\delta^n_{i+1/2}+\tilde{\C}_{i-1/2}^n(1-\delta_{i-1/2}^n)+\dfrac{D_{i+1/2}+D_{i-1/2}}{\Delta w} \Big\}.
\]
Being $M$ and $D$ defined in \eqref{eq:nu}, and $0\le \delta_{i\pm 1/2}\le 1$, we obtain the prescribed bound. 
\end{proof}
\begin{remark}
Higher order SSP methods {\rm \cite{GST}} are obtained by considering a convex combination of forward Euler methods. Therefore, the non negativity result can be extended to general SSP methods. 
\end{remark}
In practical applications, it is desirable to avoid the parabolic restriction $\Delta t = O(\Delta w^2)$ of explicit schemes. Unfortunately, fully implicit methods originate a nonlinear system of equations due to the nonlinearity of $\B[f]$ and the dependence of the weights $\delta_{i\pm 1/2}$ from the solution. However, we can prove that nonnegativity of the solution holds true also for the semi-implicit case
\be
f^{n+1}_i=f^n_i + \Delta t \dfrac{\hat{\F}_{i+1/2}^{n+1}-\hat{\F}_{i-1/2}^{n+1}}{\Delta w},
\label{eq:semi}
\ee
where
\be
\hat{\F}_{i+1/2}^{n+1}= \tilde{\C}_{i+1/2}^n \left[ (1-\delta_{i+1/2}^n)f_{i+1}^{n+1}+\delta^n_{i+1/2}f_i^{n+1} \right]+D_{i+1/2}\dfrac{f_{i+1}^{n+1}-f_i^{n+1}}{\Delta w}.
\ee
We have 
\begin{proposition}\label{prop:semiimplicit_CFL}
Under the time step restriction 
\be\label{eq:time_step_implicit}
\Delta t< \dfrac{\Delta w}{2M},\qquad M = \max_{i}|\tilde{\C}^n_{i+1/2}|
\ee
the semi-implicit scheme (\ref{eq:semi}) preserves nonnegativity, i.e 
$ f^{n+1}_i\ge 0$ if $f^n_i\ge 0$, $i=0,\dots,N$.
\label{prop:semi} 
\end{proposition}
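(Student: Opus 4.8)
The plan is to read \eqref{eq:semi} as a linear system for the unknown vector $f^{n+1}=(f_0^{n+1},\dots,f_N^{n+1})$ and to show that the associated matrix is a nonsingular $M$-matrix, so that its inverse is entrywise nonnegative and positivity of $f^{n+1}$ follows from positivity of $f^n$. Since the weights $\delta^n_{i+1/2}$ and $\tilde{\C}^n_{i+1/2}$ are frozen at time $t^n$, the update is genuinely linear: substituting $\hat{\F}^{n+1}_{i\pm1/2}$ into \eqref{eq:semi} and using the no-flux conditions $\hat{\F}^{n+1}_{-1/2}=\hat{\F}^{n+1}_{N+1/2}=0$, the scheme reads $A f^{n+1}=f^n$ with $A$ tridiagonal, whose interior off-diagonal entries are
\[
A_{i,i+1}=-\frac{\Delta t}{\Delta w}\Big[(1-\delta^n_{i+1/2})\tilde{\C}^n_{i+1/2}+\frac{D_{i+1/2}}{\Delta w}\Big],\qquad
A_{i,i-1}=-\frac{\Delta t}{\Delta w}\Big[\frac{D_{i-1/2}}{\Delta w}-\delta^n_{i-1/2}\tilde{\C}^n_{i-1/2}\Big],
\]
with the obvious one-sided modifications at $i=0$ and $i=N$. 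By the very inequalities already used in the explicit case — $(1-\delta_{i+1/2})\tilde{\C}_{i+1/2}+D_{i+1/2}/\Delta w\ge0$ and $D_{i-1/2}/\Delta w-\delta_{i-1/2}\tilde{\C}_{i-1/2}\ge0$, which hold for every value of $\lambda_{i\pm1/2}$ thanks to the sign properties of $x\mapsto x/(e^x-1)$ and $D_{i\pm1/2}\ge\alpha>0$ — the off-diagonal entries of $A$ are nonpositive.

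Next I would compute the row sums of $A$. This is the only genuine computation: in each interior row the diffusive terms $D_{i\pm1/2}/\Delta w$ cancel and the $\delta$-weighted terms collapse, leaving $\sum_j A_{ij}=1+\frac{\Delta t}{\Delta w}\big(\tilde{\C}^n_{i-1/2}-\tilde{\C}^n_{i+1/2}\big)$, while the two boundary rows give $1-\frac{\Delta t}{\Delta w}\tilde{\C}^n_{1/2}$ and $1+\frac{\Delta t}{\Delta w}\tilde{\C}^n_{N-1/2}$. Since $|\tilde{\C}^n_{i\pm1/2}|\le M$, every row sum is bounded below by $1-2M\Delta t/\Delta w$, which is strictly positive exactly under the assumption \eqref{eq:time_step_implicit}. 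Because the off-diagonal entries are $\le0$, a strictly positive row sum forces the diagonal entry to be strictly positive and makes $A$ strictly diagonally dominant by rows; hence $A$ is a nonsingular $M$-matrix, $A^{-1}\ge0$ entrywise, and $f^{n+1}=A^{-1}f^n\ge0$ whenever $f^n\ge0$. (Equivalently one may invoke the discrete minimum principle: if the minimum of $f^{n+1}$ were negative and attained at some node $i_0$, bounding each off-diagonal term from above by $A_{i_0,j}\,\min_k f_k^{n+1}$ and using the positivity of the row sum would contradict $f_{i_0}^n\ge0$, and the boundary nodes are treated in the same way.)

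The main point — and the reason the restriction is only the hyperbolic-type $\Delta t=O(\Delta w)$ rather than the parabolic $\Delta t=O(\Delta w^2)$ of the explicit scheme — is precisely the cancellation of the $D$-dependent terms in the row sums, reflecting that the diffusive part is treated implicitly. Once that cancellation is checked, pinning down the constant is merely the estimate $|\tilde{\C}^n_{i-1/2}-\tilde{\C}^n_{i+1/2}|\le2M$ at the interior nodes (and $|\tilde{\C}^n_{1/2}|,|\tilde{\C}^n_{N-1/2}|\le M$ at the boundary, which is even less restrictive); all the remaining ingredients are inherited verbatim from the analysis of the explicit method.
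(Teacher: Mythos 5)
Your proof is correct and follows essentially the same route as the paper: freeze the coefficients at time $t^n$, write the update as a tridiagonal system $\mathcal A f^{n+1}=f^n$ with nonpositive off-diagonal entries (by the same sign properties of $x\mapsto x/(e^x-1)$ used in the explicit case), verify strict diagonal dominance under $\Delta t<\Delta w/(2M)$, and conclude $\mathcal A^{-1}\ge 0$ by the M-matrix argument. Your row-sum computation, with the cancellation of the $D$-terms, is just a reformulation of the paper's direct check $|R_i^n|>|Q_i^n|+|P_i^n|$, which reduces to the same condition $1>\frac{\Delta t}{\Delta w}\bigl(\tilde{\C}^n_{i+1/2}-\tilde{\C}^n_{i-1/2}\bigr)$; your explicit treatment of the boundary rows is a small but welcome addition.
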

\begin{proof}
Equation \eqref{eq:semi} corresponds to
\[
\begin{split}
& f_i^{n+1}\left\{1-\dfrac{\Delta t}{\Delta w}\left[\tilde{\C}^n_{i+1/2}\delta_{i+1/2}^{n}-\tilde{\C}_{i-1/2}^{n}(1-\delta_{i-1/2}^{n})-\dfrac{1}{\Delta w}(D_{i+1/2}+D_{i-1/2})\right] \right\} \\
&+ f_{i+1}^{n+1} \left\{ -\dfrac{\Delta t}{\Delta w}\left[ (1-\delta_{i+1/2}^{n})\tilde{\C}_{i+1/2}^{n}+\dfrac{D_{i+1/2}}{\Delta w} \right] \right\}\\
&+ f_{i-1}^{n+1}\left\{ -\dfrac{\Delta t}{\Delta w}\left[-\tilde{\C}_{i-1/2}^{n}\delta^{n}_{i-1/2}+\dfrac{D_{i-1/2}}{\Delta w}\right] \right\}=f_i^n
\end{split}
\]
thanks to the definition of the flux function introduced in \eqref{eq:CC_flux}-\eqref{eq:f_CC}. Using the indentity $\lambda_{i+1/2}^{n} = \Delta w{\tilde{\C}_{i+1/2}^n}/{D_{i+1/2}}$ we obtain
\[
\begin{split}
&f_i^{n+1}\left\{1+\dfrac{\Delta t}{\Delta w^2}\left[ D_{i+1/2}\dfrac{\lambda_{i+1/2}^{n}}{\exp(\lambda_{i+1/2}^{n})-1}+D_{i-1/2}\dfrac{\lambda_{i-1/2}^{n}}{\exp(\lambda_{i-1/2}^{n})-1}\exp(\lambda_{i-1/2}^{n}) \right]\right\}\\
& +f_{i+1}^{n+1}\left\{-\dfrac{\Delta t}{\Delta w^2}D_{i+1/2}\dfrac{\lambda_{i+1/2}^{n}}{\exp(\lambda_{i+1/2}^{n})-1}\exp(\lambda_{i+1/2}^{n})\right\} \\
& + f_{i-1}^{n}\left\{ -\dfrac{\Delta t}{\Delta w^2}D_{i-1/2}\dfrac{\lambda_{i-1/2}^{n}}{\exp(\lambda_{i-1/2}^{n})-1} \right\} = f_i^n.
\end{split}
\] 
Let us denote $\alpha_{i+1/2}^{n}=\dfrac{\lambda_{i+1/2}^{n}}{\exp(\lambda_{i+1/2}^{n})-1} \geq 0$ and 
\be\begin{split}\label{eq:RQP}
& R_i^{n} = 1+\dfrac{\Delta t}{\Delta w^2}\left[ D_{i+1/2}\alpha_{i+1/2}^{n}+D_{i-1/2}\alpha_{i-1/2}^{n}\exp(\lambda_{i-1/2}^{n})  \right] \\
& Q_i^{n} = -\dfrac{\Delta t}{\Delta w^2} D_{i+1/2}\alpha_{i+1/2}^{n}\exp(\lambda_{i+1/2}^{n})\\
& P_i^{n} = -\dfrac{\Delta t}{\Delta w^2}D_{i-1/2}\alpha_{i-1/2}^{n},
\end{split}\ee
we can write    
\be\label{eq:implicit_2}
R_i^{n} f_i^{n+1} - Q_i^{n}f_{i+1}^{n+1}-P_i^{n}f_{i-1}^{n+1}=f_i^n.
\ee
If we introduce the matrix 
\be
(\mathcal A[f^{n}])_{ij}=
\begin{cases}
R_i^{n}, & j=i\\
-Q_i^{n}, & j = i+1,\quad 0\le i\le N-1 \\
-P_i^{n}, & j = i-1,\quad 1\le i \le N,
\end{cases}
\label{eq:A}
\ee
with $R_i^{n}> 0$, $Q_i^{n}\geq 0$, $P_i^{n}\geq 0$ defined in \eqref{eq:RQP} the semi-implicit scheme may be expressed in matrix form as follows
\be\label{eq:iter_S}
\mathcal A[\textbf{f}^{n}] \textbf{f}^{n+1}=\textbf{f}^n,
\ee
with $\textbf{f}^{n}=\left(f_0^n,\dots,f_N^n\right)$. Since $\textbf{f}^{n}\ge 0$, in order to prove that $\textbf{f}^{n+1}\ge 0$ it is sufficient to show $\mathcal A[f^n]^{-1}\ge 0$. Now, $\mathcal A[\cdot]$ is a tridiagonal matrix with positive diagonal elements and if $\mathcal A$ is strictly diagonally dominant we can conclude that $\mathcal A^{-1}\ge 0$.

The matrix $\mathcal A$ is strictly diagonally dominant if and only if 
\[
|R_i^n|>|Q_i^n|+|P_i^n|,\qquad i=0,1\dots,N,
\]
condition which holds true if
\[
\begin{split}
1&> \dfrac{\Delta t}{\Delta w^2}\left[ D_{i+1/2}\alpha_{i+1/2}^{n}\left(\exp(\lambda_{i+1/2}^{n})-1\right)-D_{i-1/2}\alpha_{i-1/2}^{n}\left( \exp(\lambda_{i-1/2}^{n})-1 \right) \right]\\
&=\dfrac{\Delta t}{\Delta w^2}\left[ D_{i+1/2}\lambda_{i+1/2}^{n}-D_{i-1/2}\lambda_{i-1/2}^{n} \right]
= \dfrac{\Delta t}{\Delta w}\left[\tilde{\C}_{i+1/2}^{n}-\tilde{\C}_{i-1/2}^{n} \right].
\end{split}
\]
\end{proof}
\begin{remark}~
\begin{itemize}
\item Higher order semi-implicit approximations can be constructed following {\rm \cite{BFR}}. An example of second order semi-implicit IMEX scheme is given in Appendix A. We mention also \cite{MB} where a second order semi-implicit BDF method has been considered. Note, however, that the determination of nonnegative semi-implicit schemes with optimal stability regions is an open problem which goes beyond the purpose of the present manuscript. 
 
\item 
Although fully implicit schemes originate a nonlinear system of equations, we remark that the same argument used in Proposition \ref{prop:semi} permits to prove nonnegativity of the scheme even with the fully implicit fluxes
\be
\F^{n+1}_{i+1/2}= \tilde{\C}_{i+1/2}^{n+1} \left[ (1-\delta_{i+1/2}^{n+1})f_{i+1}^{n+1}+\delta_{i+1/2}f_i^{n+1} \right]+D_{i+1/2}\dfrac{f_{i+1}^{n+1}-f_i^{n+1}}{\Delta w},
\ee
with 
\be
\label{eq:time_step_implicit2}
\Delta t < \dfrac{\Delta w}{2M}, \quad M = \max_{0\le i\le N}|\tilde{\C}^{n+1}_{i+1/2}|.
\ee
In fact, we obtain the nonlinear system 
\be
\mathcal{A}[{\bf f}^{n+1}]{\bf f}^{n+1}={\bf{f}}^n,
\ee
where the matrix $\mathcal A[{\bf f}^{n+1}]$ has the same structure \eqref{eq:A} with the entries evaluated at time $n+1$. The above system can  be solved iteratively at each time step
\be\begin{split}
{\bf{f}}^{n+1}_0 &= {\bf{f}}^n,\\
{\bf{f}}^{n+1}_{k+1} &= \mathcal A^{-1}[{\bf{f}}_k^{n+1}]{\bf{f}}^n, \qquad k = 0,1,\dots
\end{split}\ee
where now each iteration step is explicit and can be made non negative under a stability restriction analogous to \eqref{eq:time_step_implicit}. Therefore, if ${\bf{f}}_k^{n+1}\rightarrow {\bf{f}}^{n+1}$ as ${k\rightarrow +\infty}$ we can infer the nonnegativity of the scheme under the condition \eqref{eq:time_step_implicit2}, being $\mathcal A[{\bf{f}}^{n+1}]\ge 0$ strictly diagonally dominant and then $\mathcal A[{\bf{f}}^{n+1}]^{-1}\ge 0$. In general, the convergence properties of the above iterative method depend on the nonlinear flux function $\mathcal{B}[f]$ which defines the coefficients in $\mathcal A[{\bf{f}}]$. For example, since by nonnegativity and mass conservation $\|f_{k+1}^{n+1}\|_1=\|f_{k}^{n+1}\|_1$, $k \geq 0$, where $\|\cdot\|_1$ denotes the discrete $L_1$ norm, convergence is guaranteed for any choice of the initial value $f^{n+1}_0$ if $\mathcal A^{-1}[{\bf{\cdot}}]{\bf{f}}^n$ is a contraction \cite{CTK}.

\end{itemize}
\end{remark}


\subsubsection{Relative entropy dissipation}
In order to discuss the entropy property we consider the prototype equation 
\be\label{eq:wu}
\partial_t f(w,t) = \partial_w \left[ (w-u)f(w,t) + \partial_w (D(w)f(w,t)) \right], \qquad w\in  I=[-1,1],
\ee
with $-1<u<1$ a given constant and boundary conditions
\be\label{eq:wu_boundary}
\partial_w (D(w)f(w,t))+(w-u)f(w,t) = 0, \qquad w=\pm1.
\ee
If the stationary state $f^\infty$ exists equation \eqref{eq:wu} may be written in the \emph{Landau form} as
\be
\partial_t f(w,t) = \partial_w \left[ D(w)f(w,t)\partial_w \log\left( \dfrac{f(w,t)}{f^{\infty}(w)} \right) \right],
\label{eq:landau}
\ee
or in the \emph{non logarithmic Landau form} as
\be\label{eq:nonlog_landau}
\partial_t f(w,t) = \partial_w \left[ D(w)f^{\infty}(w)\partial_w \left(\dfrac{f(w,t)}{f^{\infty}(w)}\right) \right].
\ee
We define the relative entropy for all positive functions $f(w,t),g(w,t)$ as follows
\be\label{eq:rel_entropy}
\mathcal H(f,g) = \int_I f(w,t) \log\left(\dfrac{f(w,t)}{g(w,t)} \right),
\ee
we have \cite{FPTT_K}
\be
\dfrac{d}{dt}\mathcal H(f,f^{\infty}) = -\mathcal I_D(f,f^{\infty}),
\ee
where the dissipation functional $\mathcal I_D(\cdot,\cdot)$ is defined as 
\be\begin{split}
\mathcal I_D(f,f^{\infty})& = 
 \int_{\mathcal I} D(w)f(w,t)\left(\partial_w \log \left( \dfrac{f(w,t)}{f^{\infty}(w)}\right)\right)^2dw,\\
& = 
 \int_{\mathcal I} D(w)f^{\infty}(w,t)\partial_w \log \left( \dfrac{f(w,t)}{f^{\infty}(w)}\right)\partial_w\left(\frac{f}{f^{\infty}}\right)dw.\\ 
\end{split}\ee
Of course we might consider other entropies like the $L^2$ entropy which is defined as
\be\begin{split}
&L^2(f,f^{\infty}) = \int_{I}\dfrac{(f(w,t)-f^{\infty}(w))^2}{f^{\infty}(w)}dw,\\
&\dfrac{d}{dt}L_2(f,f^{\infty})= -J_D(f,f^{\infty}),
\end{split}\ee
with 
\be
J_D(f,f^{\infty}) = 2\int_{\mathcal I}D(w)f^{\infty}\left(\partial_w \left(\dfrac{f(w,t)}{f^{\infty}(w)}\right)^2\right),
\ee
see \cite{FPTT_K} for further examples.

Note that, since the definition of relative entropy implies the existence of a steady state $f^{\infty}$ then the considerations of the second part of Remark \ref{rem:grad} apply. Therefore, the weights in the Chang-Cooper type method can be evaluated exactly and are given by (\ref{eq:lambda_inf})-(\ref{eq:delta_inf}). This property is used to prove the following results.

\begin{lemma}\label{lem:flux_CCE}
 In the case $\B[f](w,t)=\B(w)$ the numerical flux function (\ref{eq:CC_flux})-(\ref{eq:f_CC}) with $\tilde{\C}_{i+1/2}$ and $\delta_{i+1/2}$ given by (\ref{eq:B_tilde})-(\ref{eq:delta}) can be written in the form (\ref{eq:nonlog_landau}) and reads
 \be
 \F_{i+1/2}^n = \dfrac{D_{i+1/2}}{\Delta w} \hat f_{i+1/2}^{\infty} \left( \dfrac{f^n_{i+1}}{f^{\infty}_{i+1}}-\dfrac{f^n_i}{f^{\infty}_i} \right),
 \label{eq:nnll}
 \ee
 with 
 \[
 \hat{f}^{\infty}_{i+1/2} = \dfrac{f_{i+1}^{\infty}f_i^{\infty}}{f_{i+1}^{\infty}-f_i^{\infty}}\log \left(\dfrac{f_{i+1}^{\infty}}{f_i^{\infty}}\right).
 \]
 \end{lemma}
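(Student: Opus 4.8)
The plan is to substitute the exact, steady-state form of the Chang--Cooper weights into the numerical flux and then reorganize the resulting two-point formula until the logarithmic mean of $f^\infty_i$ and $f^\infty_{i+1}$ appears as a coefficient. Since here $\B[f](w,t)=\B(w)$, the second part of Remark~\ref{rem:grad} applies: the stationary state $f^\infty$ exists and the integral in \eqref{eq:lambda_high} defining $\lambda_{i+1/2}$ is evaluated exactly through \eqref{eq:SS}, so the weights \eqref{eq:delta}--\eqref{eq:lambda_high} coincide with \eqref{eq:lambda_inf}--\eqref{eq:delta_inf}. Writing $\lambda:=\lambda^\infty_{i+1/2}=\log(f^\infty_i/f^\infty_{i+1})$, so that $e^\lambda=f^\infty_i/f^\infty_{i+1}$, we have $\tilde{\C}_{i+1/2}=D_{i+1/2}\lambda/\Delta w$ by \eqref{eq:lambda_high} and $\delta_{i+1/2}=1/\lambda-1/(e^\lambda-1)$ (the equivalent form of \eqref{eq:delta}).

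First I would plug these into \eqref{eq:CC_flux}--\eqref{eq:f_CC} and group the coefficients of $f^n_{i+1}$ and of $f^n_i$. A short computation gives
\[
\lambda\bigl(1-\delta_{i+1/2}\bigr)+1=\frac{\lambda\,e^\lambda}{e^\lambda-1},\qquad \lambda\,\delta_{i+1/2}-1=-\frac{\lambda}{e^\lambda-1},
\]
so that
\[
\F_{i+1/2}^n=\frac{D_{i+1/2}}{\Delta w}\,\frac{\lambda}{e^\lambda-1}\,\bigl(e^\lambda f^n_{i+1}-f^n_i\bigr).
\]
Using $e^\lambda=f^\infty_i/f^\infty_{i+1}$ to factor $e^\lambda f^n_{i+1}-f^n_i=f^\infty_i\bigl(f^n_{i+1}/f^\infty_{i+1}-f^n_i/f^\infty_i\bigr)$, it then remains to recognize the prefactor as the logarithmic mean,
\[
\frac{\lambda\,f^\infty_i}{e^\lambda-1}=\frac{f^\infty_i\log(f^\infty_i/f^\infty_{i+1})}{f^\infty_i/f^\infty_{i+1}-1}=\frac{f^\infty_{i+1}f^\infty_i}{f^\infty_{i+1}-f^\infty_i}\log\!\left(\frac{f^\infty_{i+1}}{f^\infty_i}\right)=\hat f^\infty_{i+1/2},
\]
which yields \eqref{eq:nnll}.

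To justify that this is the discretization \eqref{eq:nonlog_landau} of the non-logarithmic Landau form, I would note that $\hat f^\infty_{i+1/2}$ is exactly the logarithmic mean of $f^\infty_i$ and $f^\infty_{i+1}$, hence a consistent (second-order) approximation of $f^\infty(w_{i+1/2})$, while $\Delta w^{-1}\bigl(f^n_{i+1}/f^\infty_{i+1}-f^n_i/f^\infty_i\bigr)$ is the standard centered approximation of $\partial_w(f/f^\infty)$ at $w_{i+1/2}$; thus $\F_{i+1/2}^n$ approximates $D(w)f^\infty(w)\,\partial_w(f/f^\infty)$ at the cell interface. The degenerate case $f^\infty_i=f^\infty_{i+1}$ ($\lambda=0$) is covered by continuity: there $\delta_{i+1/2}\to 1/2$ and $\hat f^\infty_{i+1/2}\to f^\infty_i$, and \eqref{eq:nnll} holds in the limit.

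I do not expect a genuine obstacle: the statement is an algebraic identity and the proof is essentially the computation above. The only point requiring care is the simplification of $\lambda(1-\delta_{i+1/2})+1$ and $\lambda\,\delta_{i+1/2}-1$, where a sign error in $\lambda$ (it is $\log(f^\infty_i/f^\infty_{i+1})$, not the reciprocal) or in $e^\lambda-1$ versus $1-e^\lambda$ would propagate through the rest; carrying $\delta_{i+1/2}$ in the form $1/\lambda-1/(e^\lambda-1)$ from the start keeps this transparent.
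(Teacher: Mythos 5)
Your proposal is correct and follows essentially the same route as the paper: it uses that for $\B[f]=\B(w)$ the weights reduce to their exact steady-state form $\lambda_{i+1/2}=\log(f_i^\infty/f_{i+1}^\infty)$, substitutes them into the Chang--Cooper flux, and simplifies algebraically to the logarithmic-mean form \eqref{eq:nnll}; grouping the coefficients of $f_{i+1}^n$ and $f_i^n$ via $e^\lambda$ is only a cosmetic variant of the paper's direct substitution, and your remarks on consistency with \eqref{eq:nonlog_landau} and on the degenerate case $f_i^\infty=f_{i+1}^\infty$ are correct additions.
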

 \begin{proof}
 In the hypothesis $\B[f](w,t)=\mathcal  B(w)$ the definition of $\lambda_{i+1/2}$ does not depends on time, i.e. $\lambda_{i+1/2}=\lambda_{i+1/2}^{\infty}$ and if a steady state exists we may write
 \[
 \log f_{i}^{\infty}-\log f_{i+1}^{\infty} = \lambda_{i+1/2}.
 \]
Furthermore, the flux function $\F^n_{i+1/2}$ assumes the following form
 \be\begin{split}
 \F^n_{i+1/2} &= \dfrac{D_{i+1/2}}{\Delta w}\left[ \lambda_{i+1/2}\tilde{f}^{n}_{i+1/2} +(f_{i+1}^n-f_i^n)\right]\\
 &=  \dfrac{D_{i+1/2}}{\Delta w}\left[ \lambda_{i+1/2} (f_{i+1}^n+\delta_{i+1/2}(f_i^n-f_{i+1}^n)) +(f_{i+1}^n-f_i^n)\right],
 \end{split}\ee
 where 
 \be
 \delta_{i+1/2} = \dfrac{1}{\log f_{i}^{\infty}-\log f_{i+1}^{\infty}}+\dfrac{f_{i+1}^{\infty}}{f_{i+1}^{\infty}-f_i^{\infty}}.
 \ee
 Hence we have
   \be\begin{split}
 \mathcal F_{i+1/2}^n = \dfrac{D_{i+1/2}}{\Delta w} \log \left( \dfrac{f^{\infty}_i}{f_{i+1}^{\infty}}\right)  &\left[f_{i+1}+ \left(\dfrac{f_i-f_{i+1}}{\log f_{i}^{\infty}-\log f_{i+1}^{\infty}}+\dfrac{f_{i+1}^{\infty}(f_i-f_{i+1})}{f_{i+1}^{\infty}-f_i^{\infty}}\right)\right.\\
 &\left.\quad+\dfrac{f_{i+1}-f_{i}}{\log f_i^{\infty}-\log f_{i+1}^{\infty}}  \right], \\
 = \dfrac{D_{i+1/2}}{\Delta w} \log\left( \dfrac{f^{\infty}_i}{f_{i+1}^{\infty}}\right)& \left( \dfrac{f_{i+1}^{\infty}f_i-f_{i}^{\infty}f_{i+1}}{f_{i+1}^{\infty}-f_i^{\infty}} \right)
 \end{split}\ee
  which gives (\ref{eq:nnll}).
 \end{proof}

\begin{theorem}\label{th:1}
Let us consider $\B[f](w,t)=w-u$ as in equation \eqref{eq:wu}. The numerical flux (\ref{eq:CC_flux})-(\ref{eq:f_CC}) with $\tilde{\C}_{i+1/2}$ and $\delta_{i+1/2}$ given by (\ref{eq:B_tilde})-(\ref{eq:delta}) satisfies the discrete entropy dissipation
\be
\dfrac{d}{dt}\mathcal H_{\Delta}(f,f^{\infty})=-\mathcal I_{\Delta}(f,f^{\infty}),
\ee
where
\be\label{eq:relative_entropy}
\mathcal H_{\Delta w}(f,f^{\infty}) = \Delta w \sum_{i=0}^N f_i \log \left(\dfrac{f_i}{f_i^{\infty}} \right)
\ee
and $I_{\Delta}$ is the positive discrete dissipation function 
\be
\mathcal I_{\Delta}(f,f^{\infty}) = \sum_{i=0}^N
 \left[ \log \left(\dfrac{f_{i+1}}{f^{\infty}_{i+1}}\right)-\log\left(\dfrac{f_i}{f_i^{\infty}}\right) \right]\cdot \left(\dfrac{f_{i+1}}{f_{i+1}^{\infty}}-\dfrac{f_i}{f_{i}^{\infty}}\right)\hat{f}_{i+1/2}^{\infty}D_{i+1/2}\ge 0.
\ee
\end{theorem}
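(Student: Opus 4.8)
The plan is to differentiate the discrete relative entropy \eqref{eq:relative_entropy} in time, insert the scheme, perform a discrete summation by parts, and then use Lemma~\ref{lem:flux_CCE} to recognize the outcome as minus a manifestly nonnegative functional.

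First I would exploit that the $f^\infty_i$ are time independent to write
\[
\frac{d}{dt}\mathcal H_{\Delta w}(f,f^\infty)
=\Delta w\sum_{i=0}^N\Big(\log\tfrac{f_i}{f^\infty_i}+1\Big)\frac{d}{dt}f_i
=\sum_{i=0}^N\Big(\log\tfrac{f_i}{f^\infty_i}+1\Big)\big(\F_{i+1/2}-\F_{i-1/2}\big),
\]
using the conservative discretization \eqref{eq:dflux}. By the conservation property (telescopic summation together with the no-flux conditions $\F_{-1/2}=\F_{N+1/2}=0$) the contribution of the constant $+1$ drops, leaving $\sum_i\log(f_i/f^\infty_i)\,(\F_{i+1/2}-\F_{i-1/2})$.

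Next I would apply a discrete summation by parts, moving the difference onto the logarithmic factor and again using the no-flux conditions to cancel the boundary contributions, so that
\[
\frac{d}{dt}\mathcal H_{\Delta w}(f,f^\infty)
=-\sum_{i=0}^N\Big(\log\tfrac{f_{i+1}}{f^\infty_{i+1}}-\log\tfrac{f_i}{f^\infty_i}\Big)\F_{i+1/2}.
\]
At this point the key input is Lemma~\ref{lem:flux_CCE}: since here $\B[f](w,t)=w-u$ is of the admissible form $\B(w)$ and the steady state $f^\infty$ exists, the Chang--Cooper weights are the exact ones \eqref{eq:lambda_inf}--\eqref{eq:delta_inf} and the numerical flux takes the non-logarithmic Landau form \eqref{eq:nnll}. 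Substituting \eqref{eq:nnll} and tracking the mesh-size factors yields exactly
\[
\frac{d}{dt}\mathcal H_{\Delta w}(f,f^\infty)
=-\sum_{i=0}^N\Big(\log\tfrac{f_{i+1}}{f^\infty_{i+1}}-\log\tfrac{f_i}{f^\infty_i}\Big)\Big(\tfrac{f_{i+1}}{f^\infty_{i+1}}-\tfrac{f_i}{f^\infty_i}\Big)\hat f^\infty_{i+1/2}D_{i+1/2}
=-\mathcal I_\Delta(f,f^\infty).
\]

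It remains to see that $\mathcal I_\Delta\ge 0$, which I would do termwise: each summand is a product of three nonnegative quantities. Indeed $D_{i+1/2}\ge\alpha>0$ by assumption; the weight $\hat f^\infty_{i+1/2}=\frac{f^\infty_{i+1}f^\infty_i}{f^\infty_{i+1}-f^\infty_i}\log\frac{f^\infty_{i+1}}{f^\infty_i}$ is strictly positive because $f^\infty_i,f^\infty_{i+1}>0$ and $\log(a/b)$ has the sign of $a-b$ (its value being the removable limit $f^\infty_i>0$ when $f^\infty_{i+1}=f^\infty_i$); and $(\log x-\log y)(x-y)\ge 0$ for all $x,y>0$ by monotonicity of the logarithm, applied with $x=f_{i+1}/f^\infty_{i+1}$, $y=f_i/f^\infty_i$, which are positive by the positivity-preservation property of the scheme. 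The only points requiring care are the vanishing of the boundary terms in the summation by parts — precisely where the no-flux conditions are used — and the well-definedness and positivity of $\hat f^\infty_{i+1/2}$ where the steady state is locally constant; the genuinely substantive step, rewriting the flux in non-logarithmic Landau form, has already been carried out in Lemma~\ref{lem:flux_CCE}, so the theorem reduces to assembling that identity with a discrete integration by parts.
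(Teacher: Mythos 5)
Your proposal is correct and follows essentially the same route as the paper's proof: differentiate the discrete relative entropy, use the conservative scheme and a discrete summation by parts, insert the non-logarithmic Landau form of the flux from Lemma~\ref{lem:flux_CCE}, and conclude by the monotonicity of the logarithm. Your additional remarks on the cancellation of the constant term, the mesh-size factors, and the positivity of $\hat f^{\infty}_{i+1/2}$ are consistent with (and slightly more explicit than) the argument in the paper.
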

\begin{proof}
From the definition of relative entropy we have
\[
\begin{split}
\dfrac{d}{dt} \mathcal H(f,f^{\infty})&= \Delta w\sum_{i=0}^N  \dfrac{df_i}{dt}\left(\log\left(\dfrac{f_i}{f_i^{\infty}}\right)+1\right)\\
&= \Delta w \sum_{i=0}^N \left( \log\left(\dfrac{f_i}{f_i^{\infty}}\right)+1 \right)(\F_{i+1/2}-\F_{i-1/2}),
\end{split}
\]
and after summation by parts we get
\be
\dfrac{d}{dt} \mathcal H(f,f^{\infty})=-\Delta w \sum_{i=0}^N \left[ \log \left(\dfrac{f_{i+1}}{f_{i+1}^{\infty}}\right)-\log\left(\dfrac{f_i}{f_i^{\infty}}\right) \right]\F_{i+1/2}.
\ee
Thanks to the identity of Lemma \ref{lem:flux_CCE}  we may conclude since the function $(x-y)\log(x/y)$ is non-negative for all $x,y\ge 0$.
\end{proof}

\section{Entropic schemes for gradient flow problems}
In this section we introduce a second class of structure preserving numerical scheme based on the entropy dissipation principle.
To this aim, let us consider the general class of nonlinear Fokker-Planck equation with 
gradient flow structure \cite{BCCD,CCH,CMV}
\be\label{eq:gradient_2}
\partial_t f(w,t) = \nabla_w \cdot [f(w,t)\nabla_w\xi(w,t)], \qquad w\in\Omega\subseteq\RR^d,
\ee
and no-flux boundary conditions. In the case of equation \eqref{eq:NAD} with constant diffusion $D$ 
we have
\be
\nabla_w \xi(w,t) = {\B}[f](w,t) + D {\nabla_w \log f(w,t)}.
\label{eq:xi}
\ee
We focus on the following prototype of function $\xi(w,t)$, $w\in\RR^d$ 
\be\label{eq:xi_2}
\xi = (U*f)(w,t)+D\log f(w,t),
\ee
which in our case corresponds to
\[
{\B}[f] (w,t)=\nabla_w (U*f)(w,t),
\]
with $U=U(|w|)$ an interaction potential.

The corresponding free energy is given by
\be
\mathcal E(t) = \dfrac{1}{2}\int_{\RR^d}(U*f)(w,t)f(w,t)dw+D\int_{\RR^d}\log f(w,t) f(w,t)dw.
\label{eq:Ent}
\ee 
Using the fact that $U$ is an even function we can write 
\be
\begin{split}
\dfrac{d}{dt}\mathcal E(t) &= \int_{\RR^d} \partial_t f(w,t)dw+ \int_{\RR^d} ((U*f)(w,t)+D\log f(w,t))\partial_t f(w,t)dw\\
&= \int_{\RR^d} \nabla_w \cdot [f(w,t)\nabla_w\xi(w,t)](1+\xi)dw. 
\end{split}
\ee
Hence, upon integration by parts we obtain 
the dissipation of the free energy $\mathcal E(t)$ along solutions 
\be\label{eq:dissipation}
\dfrac{d}{dt}\mathcal E(t) = -\int_{\RR^d}|\nabla_w\xi|^2 f(w,t)dw =- \mathcal I(t),
\ee
where $\mathcal I(\cdot)$ is the entropy dissipation function.
\medskip

\subsection{Derivation of the schemes}
\label{sec:ea}
In the one-dimensional case $d=1$ equation \eqref{eq:gradient_2} reads 
\be
\label{eq:gradient_2b}
\partial_t f(w,t) = \partial_w [f(w,t)\partial_w\xi(w,t)], \qquad w\in\Omega\subseteq\RR,
\ee
where 
\be
\partial_w \xi(w,t) = {\B}[f](w,t) + D {\partial_w \log f(w,t)},
\label{eq:xib}
\ee
and we assume ${\B}[f](w,t)=\partial_w (U \ast f)(w,t)$.

In order to derive schemes which satisfy the entropy dissipation property \eqref{eq:dissipation} we consider the semi-discrete version of the entropy (\ref{eq:Ent})
\be\begin{split}\label{eq:discrete_entropy}
\mathcal E_{\Delta}(t) &= \Delta w \sum_{j=0}^N \left[ \dfrac{1}{2}\Delta w\sum_{i=0}^N U_{j-i}f_if_j+D f_j\log f_j \right].
\end{split}
\ee
Therefore, we have
\[
\begin{split}
\dfrac{d}{dt}\mathcal{E}_{\Delta} &= \Delta w \sum_{j=0}^N \left[\Delta w \sum_{i=0}^N U_{j-i}f_i\dfrac{df_j}{dt} +D\left(\log f_j+1\right)\dfrac{df_j}{dt} \right]\\
&= \Delta w\sum_{j=0}^N \left[\Delta w \sum_{i=0}^NU_{j-i}f_i+D\log f_j+1\right]\dfrac{df_j}{dt}.
\end{split}
\]
Now using the general discrete conservative formulation (\ref{eq:dflux}) and the fact that $\xi_i = U*f_i+D\log f_i$ we get
\[
\dfrac{d}{dt}\mathcal{E}_{\Delta} = \sum_{j=0}^N (\xi_j+1)(\F_{j+1/2}-\F_{j-1/2}).
\]
Furthermore, after summation by parts we can write the last term as follows
\be\begin{split}\label{eq:back_E}
\dfrac{d}{dt}\mathcal{E}_{\Delta} &= -\sum_{j=0}^N(\xi_{j+1}-\xi_{j})\F_{j+1/2}.
\end{split}\ee
Now, integrating \eqref{eq:xi} over the cell $[w_i,w_{i+1}]$ we obtain
\be
\xi_{i+1}-\xi_i  = {\Delta w\,\tilde{ \B}_{i+1/2}} + D\log\left(\dfrac{f_{i+1}}{f_i}\right),
\label{eq:xj1xj}
\ee
where 
\[
\tilde{\B}_{i+1/2}=\frac1{\Delta w}\int_{w_i}^{w_{i+1}}\B[f](w,t)\,dw.
\]
Let us now consider a general scheme in the Chang-Cooper form \eqref{eq:CC_flux}, which in our case can be rewritten as
\be
\F_{i+1/2} = \left({\tilde{ \B}_{j+1/2}}+\frac{D}{\Delta w}\log\left(\dfrac{f_{j+1}}{f_j}\right)K_{j+1/2}\right)\tilde f_{j+1/2}
\ee
with 
\be
K_{j+1/2} = \frac1{\tilde f_{j+1/2}}\dfrac{f_{j+1}-f_j}{(\log f_{j+1}-\log f_j)},\qquad f_{j+1}\ne f_j.
\ee
Therefore, we have
\be
\label{eq:diss_entropy_general}
\begin{split}
\dfrac{d}{dt}\mathcal{E}_{\Delta} =-\Delta w \sum_{j=0}^N  &\left({\tilde{ \B}_{j+1/2}}+ \dfrac{D}{\Delta w}\log\left(\dfrac{f_{j+1}}{f_j}\right)\right)\\
& \left({\tilde{ \B}_{j+1/2}} + \dfrac{D}{\Delta w}\log\left(\dfrac{f_{j+1}}{f_j}\right)K_{j+1/2}\right)\tilde f_{j+1/2}.
\end{split}
\ee
Thus we cannot prove that the discrete entropy functional \eqref{eq:discrete_entropy} is dissipated by the Chang-Cooper type scheme developed in the previous sections, unless $K_{j+1/2}\equiv 1$. This latter requirement is satisfied if we consider the new entropic flux function
\be\label{eq:entropic_average}
 \tilde f^{E}_{i+1/2} =
 \begin{cases}
  \dfrac{f_{i+1}-f_i}{\log f_{i+1}-\log f_i} & f_{i+1}\ne f_i,\\
 f_{i+1} & f_{i+1}=f_i.
 \end{cases}\ee
 We will refer to the above approximation of the solution at the grid point $i+1/2$ as \emph{entropic average} of the grid points $i$ and $i+1$. In the general case of the flux function \eqref{eq:flux} with non constant diffusion the resulting numerical flux reads
\be\label{eq:F_E}
\F^E_{i+1/2} = D_{i+1/2}\left( \frac{\tilde {\C}_{i+1/2}}{D_{i+1/2}}+\dfrac{\log f_{i+1}-\log f_i}{\Delta w}  \right) \tilde f^E_{i+1/2}.
 \ee
Finally, concerning the stationary state, we obtain immediately imposing the numerical flux equal to zero
\[
\frac{\tilde {\C}_{i+1/2}}{D_{i+1/2}}+\dfrac{\log f_{i+1}-\log f_i}{\Delta w}=0,
\]  
and therefore we get
\[
\frac{f_{i+1}}{f_i} = \exp\left(-\frac{\Delta w\,\tilde {\C}_{i+1/2}}{D_{i+1/2}}\right).
\] 
By equating the above ratio with the quasi-stationary approximation \eqref{eq:quasi_SS} we get the same expression for $\tilde{\C}_{i+1/2}$ as in \eqref{eq:B_tilde}
\be\label{eq:B_tilde2}
\tilde{\C}_{i+1/2}=\dfrac{D_{i+1/2}}{\Delta w}\int_{w_i}^{w_{i+1}}\dfrac{\B[f](w,t)+D'(w)}{D(w)}dw.
\ee 
We can state the following 
\begin{proposition}
The numerical flux function \eqref{eq:F_E} with $\tilde{\C}_{i+1/2}$ defined by \eqref{eq:B_tilde2}  vanishes when the corresponding flux \eqref{eq:xib} is equal to zero over the cell $[w_i,w_{i+1}]$.  
\end{proposition}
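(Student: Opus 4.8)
The plan is to mimic, for the entropic flux \eqref{eq:F_E}, the argument already used for the Chang-Cooper numerical flux \eqref{eq:CC_flux}--\eqref{eq:f_CC}, the only change being that the convex-combination reconstruction is replaced by the entropic average $\tilde f^E_{i+1/2}$ of \eqref{eq:entropic_average}. First I would make precise the meaning of ``the corresponding flux \eqref{eq:xib} vanishes over the cell $[w_i,w_{i+1}]$'': since the continuous steady equation \eqref{eq:FP_steady} is in general not solvable, this is to be read in the integrated, quasi steady-state sense. Imposing $\F[f](w,t)\equiv 0$, that is $D(w)\partial_w f=-(\B[f](w,t)+D'(w))f=-\C[f](w,t)f$, and integrating $\partial_w\log f=-\C[f]/D$ over the cell, one recovers exactly \eqref{eq:quasi_SS}, namely
\[
\log f(w_{i+1},t)-\log f(w_i,t)=-\int_{w_i}^{w_{i+1}}\frac{\B[f](w,t)+D'(w)}{D(w)}\,dw=-\lambda_{i+1/2},
\]
with $\lambda_{i+1/2}$ as in \eqref{eq:lambda_high}.

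Next I would compute the vanishing condition of the numerical flux. Since $f$ is positive on the grid, the entropic average $\tilde f^E_{i+1/2}$ is strictly positive — it is a logarithmic mean of $f_i$ and $f_{i+1}$, reducing to $f_{i+1}$ when $f_{i+1}=f_i$ — so from \eqref{eq:F_E} the relation $\F^E_{i+1/2}=0$ is equivalent to
\[
\frac{\tilde{\C}_{i+1/2}}{D_{i+1/2}}+\frac{\log f_{i+1}-\log f_i}{\Delta w}=0,
\]
i.e. $\log f_{i+1}-\log f_i=-\Delta w\,\tilde{\C}_{i+1/2}/D_{i+1/2}$. Inserting the definition \eqref{eq:B_tilde2} of $\tilde{\C}_{i+1/2}$ gives precisely $\Delta w\,\tilde{\C}_{i+1/2}/D_{i+1/2}=\lambda_{i+1/2}$, so the numerical-flux-vanishing condition reads $\log f_{i+1}-\log f_i=-\lambda_{i+1/2}$. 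Comparing with the quasi steady-state identity of the first paragraph, the two conditions coincide, which proves the claim.

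The argument is a one-line matching of ratios, so I do not anticipate a genuine obstacle; the only points deserving a word of care are (i) stating explicitly that ``flux zero over the cell'' means the quasi steady-state relation \eqref{eq:quasi_SS} rather than pointwise solvability of \eqref{eq:FP_steady}, and (ii) noting that $\tilde f^E_{i+1/2}>0$, so that dividing by it is legitimate and the equivalence above is genuine, including in the degenerate case $f_{i+1}=f_i$, where both sides of the vanishing condition vanish and $\tilde f^E_{i+1/2}=f_{i+1}>0$. If one prefers to read the statement as a mere consistency property (the numerical flux vanishes when evaluated on a continuous steady profile), the very same computation applies verbatim.
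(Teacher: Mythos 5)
Your argument is correct and coincides with the paper's own derivation, which obtains the proposition by setting $\F^E_{i+1/2}=0$ in \eqref{eq:F_E}, using positivity of the entropic average to reduce this to $\log f_{i+1}-\log f_i=-\Delta w\,\tilde{\C}_{i+1/2}/D_{i+1/2}$, and matching this ratio with the quasi steady-state relation \eqref{eq:quasi_SS}, exactly as you do. Your extra remarks on the meaning of ``flux zero over the cell'' and on the degenerate case $f_{i+1}=f_i$ are consistent with the paper and add only minor bookkeeping.
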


\subsection{Main properties}
 
A fundamental result concerning the entropic average \eqref{eq:entropic_average} is the following Lemma.

 \begin{lemma}
The entropy average defined in \eqref{eq:entropic_average} may be written as a convex combination with nonlinear weights
\be
\tilde{f}^{E}_{i+1/2} = \delta_{i+1/2}^{E} f_i +(1-\delta_{i+1/2}^{E})f_{i+1},
\ee
where 
\be\label{eq:deltaE}
\delta_{i+1/2}^{E} = \dfrac{f_{i+1}}{f_{i+1}-f_i}+\dfrac{1}{\log f_i-\log f_{i+1}}\in(0,1).
\ee
 \end{lemma}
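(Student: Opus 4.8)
The plan is to show that $\delta_{i+1/2}^{E}$ as defined in \eqref{eq:deltaE} is precisely the weight that makes the stated convex combination reproduce the entropic average \eqref{eq:entropic_average}, and then to verify $\delta_{i+1/2}^{E}\in(0,1)$. First I would treat the generic case $f_{i+1}\neq f_i$ (both positive). Writing $\delta^{E}_{i+1/2} f_i + (1-\delta^{E}_{i+1/2})f_{i+1} = f_{i+1} - \delta^{E}_{i+1/2}(f_{i+1}-f_i)$, I substitute the expression for $\delta^{E}_{i+1/2}$. The term $\tfrac{f_{i+1}}{f_{i+1}-f_i}(f_{i+1}-f_i) = f_{i+1}$ cancels the leading $f_{i+1}$, leaving $-\tfrac{f_{i+1}-f_i}{\log f_i - \log f_{i+1}} = \tfrac{f_{i+1}-f_i}{\log f_{i+1}-\log f_i}$, which is exactly $\tilde f^{E}_{i+1/2}$. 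This is a one-line algebraic identity, so there is really no obstacle in this part; the only care needed is the sign bookkeeping in $\log f_i - \log f_{i+1}$ versus $\log f_{i+1}-\log f_i$.

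Next I would handle the degenerate case $f_{i+1}=f_i$. Here the formula for $\delta^{E}_{i+1/2}$ is a $0/0$ indeterminate, so I would interpret it as a limit: setting $f_{i+1}=f_i(1+\eta)$ and letting $\eta\to0$, a Taylor expansion of $\log(1+\eta)=\eta-\eta^2/2+O(\eta^3)$ gives $\tfrac{f_{i+1}}{f_{i+1}-f_i} = \tfrac{1+\eta}{\eta}$ and $\tfrac{1}{\log f_i-\log f_{i+1}} = \tfrac{-1}{\log(1+\eta)} = -\tfrac1\eta\cdot\tfrac{1}{1-\eta/2+O(\eta^2)} = -\tfrac1\eta(1+\eta/2+O(\eta^2))$; summing, the $1/\eta$ singularities cancel and the limit is $1/2$. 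Then the convex combination $\tfrac12 f_i + \tfrac12 f_{i+1} = f_{i+1}$, matching the second branch of \eqref{eq:entropic_average}. This confirms consistency of the two branches and in passing shows $\delta^{E}_{i+1/2}$ extends continuously through the diagonal.

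Finally, for the bound $\delta^{E}_{i+1/2}\in(0,1)$, the cleanest route is to use the logarithmic mean inequality: for $a\neq b>0$ one has $\sqrt{ab} < \tfrac{a-b}{\log a - \log b} < \tfrac{a+b}{2}$, so $\min(a,b) < \tilde f^{E}_{i+1/2} < \max(a,b)$. Since $\tilde f^{E}_{i+1/2}$ lies strictly between $f_i$ and $f_{i+1}$, and any value strictly between them is of the form $\delta f_i+(1-\delta)f_{i+1}$ with $\delta\in(0,1)$, the weight recovered above must satisfy $\delta^{E}_{i+1/2}\in(0,1)$. Alternatively, one can argue directly: writing $r=f_{i+1}/f_i>0$, $r\neq1$, one has $\delta^{E}_{i+1/2} = \tfrac{r}{r-1} - \tfrac{1}{\log r} = g(r)$, and it suffices to check $0<g(r)<1$, which reduces to the elementary inequalities $\tfrac{\log r}{r-1}<1$ (equivalently $\log r < r-1$) for $\delta^{E}>0$ after rearrangement, and $r-1 < r\log r$ (equivalently $1-\tfrac1r<\log r$) for $\delta^{E}<1$; both are the standard strict convexity bounds on the logarithm, valid for all $r>0$, $r\neq1$. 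I expect the main (mild) obstacle to be only the careful treatment of the indeterminate case and keeping the direction of the inequalities straight; the core identity itself is immediate.
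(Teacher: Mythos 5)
Your proposal is correct and follows essentially the same route as the paper: the core step is the same one-line algebraic substitution showing $f_{i+1}-\delta^{E}_{i+1/2}(f_{i+1}-f_i)=\frac{f_{i+1}-f_i}{\log f_{i+1}-\log f_i}$, and you additionally carry out the verification of $\delta^{E}_{i+1/2}\in(0,1)$ (which the paper dismisses as an ``easy computation'') via the logarithmic-mean bounds $\min(f_i,f_{i+1})<\tilde f^{E}_{i+1/2}<\max(f_i,f_{i+1})$, together with the limiting case $f_{i+1}=f_i$, where $\delta^{E}_{i+1/2}\to 1/2$. One small slip in your alternative elementary argument: with $r=f_{i+1}/f_i$ the inequality $\log r<r-1$ is what yields $\delta^{E}_{i+1/2}<1$, while $1-\tfrac1r<\log r$ yields $\delta^{E}_{i+1/2}>0$ (you attached them the other way around); this does not affect correctness since both inequalities hold for all $r>0$, $r\neq1$, and your logarithmic-mean argument already settles the bound.
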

 
 \begin{proof}
 From \eqref{eq:deltaE} we have
 \[
 \begin{split}
 \tilde{f}^{E}_{i+1/2} &= f_{i+1}+\delta_{i+1/2}^{E} (f_i-f_{i+1}) \\
 &= f_{i+1}-f_{i+1}+\dfrac{f_i-f_{i+1}}{\log f_i-\log f_{i+1}} \\
 &= \dfrac{f_{i+1}-f_i}{\log f_{i+1}-\log f_i},
 \end{split}
 \]
 that is \eqref{eq:F_E}. It is a easy computation to verify that $\delta^{E}_{i+1/2}$ lies in the interval $(0,1)$.
 \end{proof}
 
 \begin{remark} As a consequence the Chang-Cooper type average \eqref{eq:f_CC} and the entropic average \eqref{eq:entropic_average} define the same quantity at the steady state when $f_i=f_i^{\infty}$. In fact, in this case \eqref{eq:deltaE} are the same as \eqref{eq:delta_inf}.
 \end{remark}
 
We can summarize our findings of Section \ref{sec:ea} as follows.
\begin{theorem}\label{pr:2}
The numerical flux \eqref{eq:F_E}-\eqref{eq:entropic_average} for a constant diffusion $D$ satisfies the discrete entropy dissipation
\be
\dfrac{d}{dt}\mathcal{E}_{\Delta}=- \mathcal I_{\Delta}(t),
\ee
where $\mathcal{E}_{\Delta}$ is given by \eqref{eq:discrete_entropy} 
and $I_{\Delta}$ is the discrete entropy dissipation function 
\be
\mathcal I_{\Delta} = \Delta w \sum_{j=0}^N  (\xi_{j+1}-\xi_j)^2  \tilde{f}^{E}_{i+1/2} \ge 0,
\ee
with $\xi_{j+1}-\xi_j$ defined as in \eqref{eq:xj1xj}.
\end{theorem}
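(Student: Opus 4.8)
The plan is to reverse the derivation in Section \ref{sec:ea} and simply substitute the entropic flux into the chain of identities already established there. Starting from the discrete free energy \eqref{eq:discrete_entropy}, the computation preceding \eqref{eq:back_E} is purely formal and does not depend on the particular choice of flux: it shows that for any conservative discretization \eqref{eq:dflux} with no-flux boundary conditions one has $\frac{d}{dt}\mathcal{E}_{\Delta} = -\sum_{j=0}^N (\xi_{j+1}-\xi_j)\F_{j+1/2}$, where $\xi_j = U*f_j + D\log f_j$. I would quote this step verbatim, then plug in the entropic numerical flux. Using \eqref{eq:xj1xj}, the difference $\xi_{i+1}-\xi_i = \Delta w\,\tilde{\B}_{i+1/2} + D\log(f_{i+1}/f_i)$, and since the diffusion is constant we have $\tilde{\C}_{i+1/2} = \tilde{\B}_{i+1/2}$ (as $D'=0$), so $\xi_{i+1}-\xi_i = \Delta w\left(\tilde{\B}_{i+1/2} + \frac{D}{\Delta w}\log(f_{i+1}/f_i)\right)$.

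The key point is that with the entropic average \eqref{eq:entropic_average} one has $K_{j+1/2}\equiv 1$ by construction, so the flux \eqref{eq:F_E} (with constant $D$) becomes exactly $\F^E_{i+1/2} = \left(\tilde{\B}_{i+1/2} + \frac{D}{\Delta w}\log(f_{i+1}/f_i)\right)\tilde{f}^E_{i+1/2} = \frac{1}{\Delta w}(\xi_{i+1}-\xi_i)\,\tilde{f}^E_{i+1/2}$. Substituting this into $\frac{d}{dt}\mathcal{E}_{\Delta} = -\sum_{j}(\xi_{j+1}-\xi_j)\F^E_{j+1/2}$ gives directly $\frac{d}{dt}\mathcal{E}_{\Delta} = -\Delta w\sum_{j=0}^N \frac{1}{\Delta w}(\xi_{j+1}-\xi_j)^2\,\tilde{f}^E_{j+1/2}$, which after absorbing the $\Delta w$ factors correctly (being careful that the display in the statement writes $\mathcal{I}_\Delta = \Delta w\sum (\xi_{j+1}-\xi_j)^2\tilde f^E_{i+1/2}$, so the $\Delta w$ from the free energy definition and the cell width must be tracked consistently) is precisely the claimed identity $\frac{d}{dt}\mathcal{E}_{\Delta} = -\mathcal{I}_\Delta$.

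Nonnegativity of $\mathcal{I}_\Delta$ is then immediate: each summand is a square $(\xi_{j+1}-\xi_j)^2 \geq 0$ multiplied by $\tilde{f}^E_{j+1/2}$, which is nonnegative whenever $f_j, f_{j+1} \geq 0$ — indeed by the previous Lemma $\tilde{f}^E_{i+1/2}$ is a convex combination of $f_i$ and $f_{i+1}$ with weights $\delta^E_{i+1/2}\in(0,1)$, hence strictly positive for positive data. (Alternatively, the elementary inequality that $(a-b)/(\log a - \log b)$ is positive for $a,b>0$ suffices.) So the dissipation functional is a sum of nonnegative terms.

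The main obstacle is essentially bookkeeping rather than conceptual: one must make sure the identity $K_{j+1/2}\equiv 1$ is applied correctly and that the boundary terms in the summation by parts genuinely vanish under the stated no-flux conditions $\F_{N+1/2}=\F_{-1/2}=0$ — and one should note that the entropic flux is only defined via \eqref{eq:entropic_average} when $f_{i+1}\neq f_i$, with the natural continuous extension $\tilde f^E_{i+1/2}=f_{i+1}$ when $f_{i+1}=f_i$, in which case $\xi_{i+1}-\xi_i$ reduces to $\Delta w\,\tilde\B_{i+1/2}$ and the corresponding summand is still a nonnegative square. A secondary subtlety worth a remark is the precise placement of the $\Delta w$ factors between \eqref{eq:discrete_entropy}, the summation-by-parts step, and the final expression for $\mathcal{I}_\Delta$; once the continuity equation \eqref{eq:dflux} is written as $\frac{d}{dt}f_j = (\F_{j+1/2}-\F_{j-1/2})/\Delta w$ these cancel as claimed, but it is the one place where a sign or factor slip would go unnoticed, so I would verify it explicitly.
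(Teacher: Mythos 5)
Your proposal is correct and follows essentially the same route as the paper: Theorem \ref{pr:2} is stated there as a summary of the derivation in Section \ref{sec:ea}, namely the summation-by-parts identity \eqref{eq:back_E} combined with the observation that the entropic average forces $K_{j+1/2}\equiv 1$ in \eqref{eq:diss_entropy_general}, so the dissipation becomes a sum of squares weighted by the nonnegative entropic averages. Your remark about tracking the $\Delta w$ factors is well taken (the stated form of $\mathcal I_{\Delta}$ is consistent with \eqref{eq:diss_entropy_general} once $\xi_{j+1}-\xi_j$ is read through \eqref{eq:xj1xj}), but this is bookkeeping, not a difference in method.
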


\begin{remark}
On the contrary to the Chang-Cooper average the restrictions for the non negativity property of the solution are stronger. In fact, by the same arguments we used in the previous section, non negativity of the explicit scheme requires 
\be\begin{split}
(1-\delta^E_{i+1/2})\tilde{\C}_{i+1/2}^n+\dfrac{D_{i+1/2}}{\Delta w}\ge 0, \qquad
-\delta^E_{i-1/2}\tilde{\C}_{i-1/2}^n+\dfrac{D_{i-1/2}}{\Delta w}\ge 0.
\end{split}\ee
However, the weights do not possess any special structure that permits to avoid a constraint of the mesh size $\Delta w$ which now must satisfy
\be
\begin{split}
\Delta w \leq 
\min_i\left\{\dfrac{D_{i+1/2}}{|\tilde{\C}_{i+1/2}^n|},\dfrac{D_{i-1/2}}{|\tilde{\C}_{i-1/2}^n|}\right\}.
\end{split}
\ee
Therefore, similar to central differences, we have a restriction on the mesh size which becomes prohibitive for small values of the diffusion function $D(w)$. It is easy to verify that the same condition is necessary also for the non negativity of semi-implicit approximations.
\end{remark}

\subsubsection{The case ${\B}[f](w,t)=B(w)$}

Next we consider the case of linear flux ${\B}[f](w,t)=B(w)$.
The following Lemma holds true.

\begin{lemma}\label{prop:1E}
In the case $\B[f](w,t)=B(w)$ the numerical flux \eqref{eq:F_E}-\eqref{eq:entropic_average} corresponds to the form \eqref{eq:landau} and reads
\be\begin{split}
\tilde{\F}^E_{i+1/2}=&\dfrac{D_{i+1/2}}{\Delta w} \tilde{f}^E_{i+1/2}  \left(\log \left(\dfrac{f_{i+1}}{f^{\infty}_{i+1}}\right)-\log \left(\dfrac{f_{i}}{f^{\infty}_{i}}\right)\right).
\end{split}\ee
\end{lemma}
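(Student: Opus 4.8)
The plan is to repeat the algebraic manipulation carried out in the proof of Lemma \ref{lem:flux_CCE}, but now working directly with the entropic flux \eqref{eq:F_E}-\eqref{eq:entropic_average} instead of the Chang--Cooper flux. The key structural fact I would exploit first is that, when $\B[f](w,t)=B(w)$, the quantity $\tilde{\C}_{i+1/2}$ defined by \eqref{eq:B_tilde2} is time-independent and, as recorded in Remark \ref{rem:grad} (second bullet) and in \eqref{eq:SS}-\eqref{eq:lambda_inf}, satisfies
\[
\frac{\Delta w\,\tilde{\C}_{i+1/2}}{D_{i+1/2}}=\lambda^{\infty}_{i+1/2}=\log\left(\frac{f^{\infty}_i}{f^{\infty}_{i+1}}\right).
\]
Substituting this identity into the bracket of \eqref{eq:F_E}, namely $\dfrac{\tilde{\C}_{i+1/2}}{D_{i+1/2}}+\dfrac{\log f_{i+1}-\log f_i}{\Delta w}$, immediately collapses it to $\dfrac{1}{\Delta w}\left(\log f_{i+1}-\log f_i-\log f^{\infty}_{i+1}+\log f^{\infty}_i\right)=\dfrac{1}{\Delta w}\left(\log(f_{i+1}/f^{\infty}_{i+1})-\log(f_i/f^{\infty}_i)\right)$.

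Next I would simply carry the factor $D_{i+1/2}\tilde f^{E}_{i+1/2}$ through, obtaining
\[
\tilde{\F}^{E}_{i+1/2}=\frac{D_{i+1/2}}{\Delta w}\,\tilde f^{E}_{i+1/2}\left(\log\left(\frac{f_{i+1}}{f^{\infty}_{i+1}}\right)-\log\left(\frac{f_i}{f^{\infty}_i}\right)\right),
\]
which is exactly the claimed expression. To see that this is the discrete analogue of the logarithmic Landau form \eqref{eq:landau}, I would note that $\log(f_{i+1}/f^{\infty}_{i+1})-\log(f_i/f^{\infty}_i)$ is the natural finite-difference discretization of $\Delta w\,\partial_w\log(f/f^{\infty})$ on the cell, while $\tilde f^{E}_{i+1/2}$ is an approximation of $f(w_{i+1/2},t)$; multiplying by $D_{i+1/2}$ reproduces the integrand $D(w)f(w,t)\partial_w\log(f/f^{\infty})$ of \eqref{eq:landau}.

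I do not expect any genuine obstacle here: the lemma is essentially a bookkeeping identity, and the only point requiring a little care is the case $f_{i+1}=f_i$, where $\tilde f^{E}_{i+1/2}$ is defined by continuity as $f_{i+1}$ and the logarithmic difference on the right-hand side vanishes, so that the flux is zero and the formula holds trivially (consistently with the definition of the entropic average). The mild subtlety worth flagging explicitly is that the identity $\lambda^{\infty}_{i+1/2}=\log(f^{\infty}_i/f^{\infty}_{i+1})$ used above presupposes the existence of the discrete steady state $f^{\infty}$, which is exactly the standing assumption in this subsection (it is what allows \eqref{eq:landau} to be written in the first place), so no additional hypothesis is needed.
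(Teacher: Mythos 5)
Your proposal is correct and follows essentially the same route as the paper: both use the fact that a stationary state nullifies the flux, giving $\tilde{\C}_{i+1/2} = -\frac{D_{i+1/2}}{\Delta w}\left(\log f^{\infty}_{i+1}-\log f^{\infty}_i\right)$, and then substitute this into the entropic flux \eqref{eq:F_E} to collapse the bracket into the relative-entropy difference. Your added remarks on the degenerate case $f_{i+1}=f_i$ and the consistency with \eqref{eq:landau} are fine but not needed beyond the paper's argument.
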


\begin{proof}
If a stationary $f^{\infty}(w)$ state exists it nullify the flux and we have
\[
\tilde{\C}_{i+1/2} = -\dfrac{D_{i+1/2}}{\Delta w} \left( \log f^{\infty}_{i+1}-\log f^{\infty}_i\right).
\]
From the definition of the entropic flux \eqref{eq:F_E} we obtain
\[
\begin{split}
\tilde{\F}^E_{i+1/2}=& \tilde{\C}_{i+1/2}\tilde f^E_{i+1/2}+\dfrac{D_{i+1/2}}{\Delta w}\log \dfrac{f_{i+1}}{f_i}\tilde f^E_{i+1/2}\\
=& \dfrac{D_{i+1/2}}{\Delta w} \tilde{f}^E_{i+1/2} \left[ (\log f_{i+1}-\log f_i)-(\log f^{\infty}_{i+1}-\log f^{\infty}_i) \right],
\end{split}
\]
from which we conclude.
\end{proof}
We can now state the following entropy dissipation results for problem \eqref{eq:wu} in the nonlogarithmic Landau form \eqref{eq:nonlog_landau}.
\begin{theorem}\label{th:1b}
Let us consider $\B[f](w,t)=w-u$ as in equation \eqref{eq:wu}. The numerical flux \eqref{eq:F_E}-\eqref{eq:entropic_average} with $\tilde{\C}_{i+1/2}$ given by (\ref{eq:B_tilde}) satisfies the discrete entropy dissipation
\be
\dfrac{d}{dt} \mathcal H_{\Delta}(f,f^{\infty})  = -\mathcal I^E_{\Delta}(f,f^{\infty}),
\ee
where $\mathcal H_{\Delta w}(f,f^{\infty})$ is given by \eqref{eq:relative_entropy} and $I^E_{\Delta}$ is the positive discrete dissipation function 
\be
\mathcal I^E_{\Delta}(f,f^{\infty})= \sum_{i=0}^N \left[ \log \left(\dfrac{f_{i+1}}{f^{\infty}_{i+1}}\right) -\log \left(\dfrac{f_i}{f^{\infty}_i}\right) \right]^2 D_{i+1/2} \tilde f^E_{i+1/2}\ge 0.
\ee
\end{theorem}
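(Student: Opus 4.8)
The plan is to mirror exactly the structure of the proof of Theorem~\ref{th:1}, replacing the Chang--Cooper flux identity of Lemma~\ref{lem:flux_CCE} with the entropic-flux identity of Lemma~\ref{prop:1E}. First I would differentiate the discrete relative entropy \eqref{eq:relative_entropy}: since $\frac{d}{dt}\mathcal H_{\Delta w}(f,f^\infty) = \Delta w \sum_{i=0}^N \frac{df_i}{dt}\big(\log(f_i/f_i^\infty)+1\big)$ and $f_i^\infty$ is time-independent, I substitute the conservative discretization \eqref{eq:dflux}, $\frac{df_i}{dt} = (\F^E_{i+1/2}-\F^E_{i-1/2})/\Delta w$, to obtain
\[
\frac{d}{dt}\mathcal H_{\Delta w}(f,f^\infty) = \sum_{i=0}^N \Big(\log\big(\tfrac{f_i}{f_i^\infty}\big)+1\Big)\big(\F^E_{i+1/2}-\F^E_{i-1/2}\big).
\]
Then I perform summation by parts, using the no-flux boundary conditions $\F^E_{N+1/2}=\F^E_{-1/2}=0$ to kill the boundary terms; the constant $+1$ drops out by telescoping (this is Lemma~1). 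This yields
\[
\frac{d}{dt}\mathcal H_{\Delta w}(f,f^\infty) = -\sum_{i=0}^N \Big[\log\big(\tfrac{f_{i+1}}{f_{i+1}^\infty}\big)-\log\big(\tfrac{f_i}{f_i^\infty}\big)\Big]\F^E_{i+1/2}.
\]

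Next I invoke Lemma~\ref{prop:1E}, which applies since $\B[f](w,t)=w-u=B(w)$ is linear and the steady state $f^\infty$ of \eqref{eq:wu}--\eqref{eq:wu_boundary} exists (this existence is exactly what underlies \eqref{eq:lambda_inf}--\eqref{eq:delta_inf} and the second part of Remark~\ref{rem:grad}); it gives $\F^E_{i+1/2} = \frac{D_{i+1/2}}{\Delta w}\tilde f^E_{i+1/2}\big[\log(f_{i+1}/f_{i+1}^\infty)-\log(f_i/f_i^\infty)\big]$. Substituting this into the telescoped sum produces
\[
\frac{d}{dt}\mathcal H_{\Delta w}(f,f^\infty) = -\sum_{i=0}^N \Big[\log\big(\tfrac{f_{i+1}}{f_{i+1}^\infty}\big)-\log\big(\tfrac{f_i}{f_i^\infty}\big)\Big]^2 D_{i+1/2}\,\tilde f^E_{i+1/2},
\]
which is precisely $-\mathcal I^E_\Delta(f,f^\infty)$. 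Finally, to conclude $\mathcal I^E_\Delta \ge 0$ I note that $D_{i+1/2}\ge \alpha>0$, that each bracketed factor appears squared and hence is nonnegative, and that $\tilde f^E_{i+1/2}\ge 0$ because the entropic average \eqref{eq:entropic_average} of two nonnegative numbers is nonnegative — equivalently, by the preceding Lemma it is the convex combination $\delta^E_{i+1/2}f_i+(1-\delta^E_{i+1/2})f_{i+1}$ with $\delta^E_{i+1/2}\in(0,1)$.

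The computation is essentially routine given the two lemmas, so there is no serious obstacle; the only point requiring a little care is the bookkeeping in the summation by parts, namely checking that the index shifts combine the $\F^E_{i+1/2}$ terms correctly and that the boundary contributions genuinely vanish under the no-flux conditions (I would use the convention, implicit in \eqref{eq:relative_entropy} and in the proof of Theorem~\ref{th:1}, that $\log(f_{N+1}/f_{N+1}^\infty)-\log(f_N/f_N^\infty)$ multiplies $\F^E_{N+1/2}=0$). One should also remark that, unlike in Theorem~\ref{th:1}, the quadratic form of $\mathcal I^E_\Delta$ makes positivity immediate and one does not need the auxiliary inequality ``$(x-y)\log(x/y)\ge 0$'': the entropic average is built precisely so that the dissipation functional is a manifest sum of squares, which is the discrete analogue of passing from the logarithmic Landau form \eqref{eq:landau} to the nonlogarithmic form \eqref{eq:nonlog_landau}.
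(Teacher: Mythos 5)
Your proposal is correct and follows essentially the same route as the paper's proof: differentiate the discrete relative entropy, sum by parts using the no-flux boundary conditions, insert the flux identity of Lemma~\ref{prop:1E}, and conclude positivity from the squared factor together with $D_{i+1/2}>0$ and the nonnegativity of the entropic average $\tilde f^E_{i+1/2}$. The only (harmless) point of divergence is a factor of $\Delta w$ in the bookkeeping between the semi-discrete scheme \eqref{eq:dflux} and the stated dissipation functional, a scaling inconsistency already present in the paper's own computation, which does not affect the sign of the dissipation.
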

\begin{proof}
From
\[
\dfrac{d}{dt} \mathcal H(f,f^{\infty}) = -\Delta w \sum_{i=0}^N \left[ \log \left(\dfrac{f_{i+1}}{f^{\infty}_{i+1}}\right) -\log \left(\dfrac{f_i}{f^{\infty}_i}\right) \right] \F^E_{i+1/2}
\]
and being 
\[
\F^E_{i+1/2} = \dfrac{D_{i+1/2}}{\Delta w}
\left[\log \left(\dfrac{f_{i+1}}{f_{i+1}^{\infty}}\right)-\log \left(\dfrac{f_i}{f_i^{\infty}}\right) \right]
\tilde{f}^{E}_{i+1/2}
\]
we have
\[
\dfrac{d}{dt} \mathcal H(f,f^{\infty}) = - \sum_{i=0}^N \left[ \log \left(\dfrac{f_{i+1}}{f^{\infty}_{i+1}}\right) -\log \left(\dfrac{f_i}{f^{\infty}_i}\right) \right]^2 D_{i+1/2} \tilde f^E_{i+1/2}.
\]
\end{proof} 

\section{Applications}
\label{sec:applications}
In this section we present several numerical examples of Fokker-Planck equations solved with the structure-preserving schemes here introduced. An essential aspect for the accurate description of the steady state is the approximation of the integral defining the quasi-stationary solution
\begin{equation}\label{eq:app_lambda}
\lambda_{i+1/2} = \int_{w_i}^{w_{i+1}}\dfrac{\B[f](w,t)+D'(w)}{D(w)}dw.
\end{equation}
Except for simple linear cases, a suitable quadrature formula is required. In the following numerical examples we consider open Newton-Cotes formulas up to order 6 and Gauss-Legendre quadrature with $6$ points. 
In the sequel, we will adopt the notation SP--CC$_k$ and SP--EA$_k$, $k=2,4,6,G$ to denote the structure preserving schemes with Chang-Cooper (CC) and entropic average (EA) flux and when \eqref{eq:app_lambda} is approximated with second, fourth, sixth order Newton--Cotes quadrature or Gaussian quadrature, respectively.
Singularities at the boundaries in the integration of (\ref{eq:app_lambda}) can be avoided using open Newton--Cotes rules.

\subsection{Example 1: Opinion dynamics in bounded domains}
Let us consider the evolution of a distribution function described by \eqref{eq:NAD}, with $w\in I$, where $I=[-1,1]$, and
\be\label{eq:opinion_BD}
\B[f](w,t) = \int_I P(w,w_*)(w-w_*)f(w_*,t)dw_*, \qquad D(w)=\dfrac{\sigma^2}{2}(1-w^2)^2.
\ee
The model describes the evolution of the distribution functions of agents having opinion $w$ at time $t$ (see \cite{PT2,T} for more details).

In the simplified case $P\equiv 1$ the corresponding stationary distribution reads
\be\label{eq:opinion_stat}
f_{\infty}(w) = \dfrac{C}{(1-w^2)^2}\left(\dfrac{1+w}{1-w}\right)^{u/(2\sigma^2)}\exp\Big\{ -\dfrac{(1-uw)}{\sigma^2(1-w^2)} \Big\},
\ee
with $\sigma\in\RR$ a given parameter, $C>0$ is a normalization constant and $u=\int_I wf(w,t)dw,$. 

We consider as initial distribution
\be\label{eq:opinion_initial}
f(w,0) =\beta \left[ \exp\left(-c(w+1/2)^2\right)+\exp\left(-c(w-1/2)^2\right) \right], \qquad c = 30,
\ee
with $\beta>0$ a normalization constant. Since diffusion vanishes at the boundaries we present results for the Chang-Cooper type numerical schemes SP--CC only. 

In Figure \ref{fig:example1_L1ent} we compute the relative $L^1$ error of the numerical solution with respect to the exact \eqref{eq:opinion_stat} stationary state using $N=80$ points for the SP--CC scheme with various quadrature rules. 
It is possible to observe how the different integration methods capture the steady state with different accuracy. In particular low order quadrature rules achieve the numerical steady state faster than high order quadratures, with the Gaussian quadrature that essentially reach machine precision. In the same figure we illustrate how SP--CC scheme dissipates the relative entropy \eqref{eq:relative_entropy} in the case of two coarse grids with $N=10$ and $N=20$ points.

\begin{figure}
\centering
\includegraphics[scale=0.295]{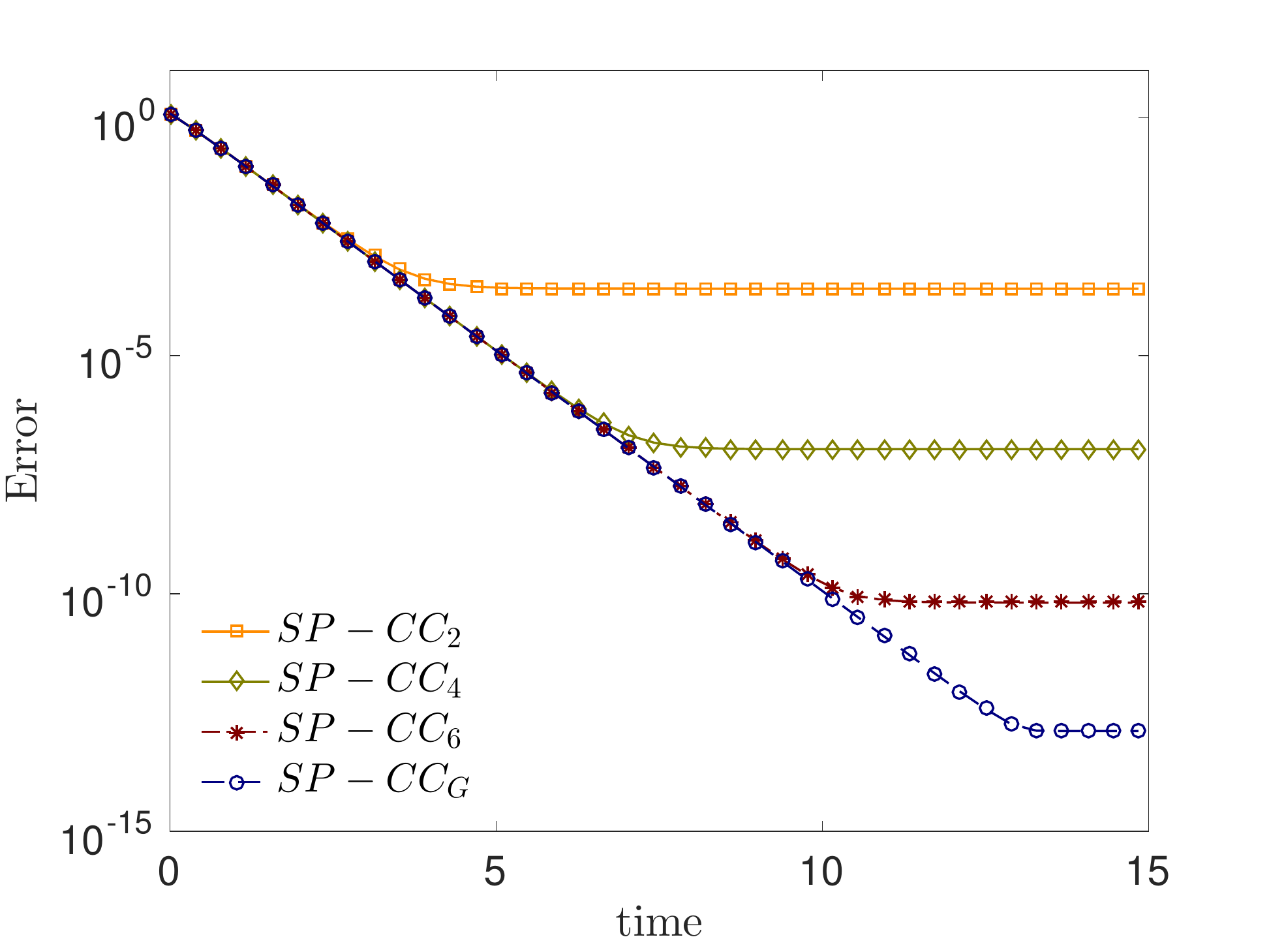}
\includegraphics[scale=0.295]{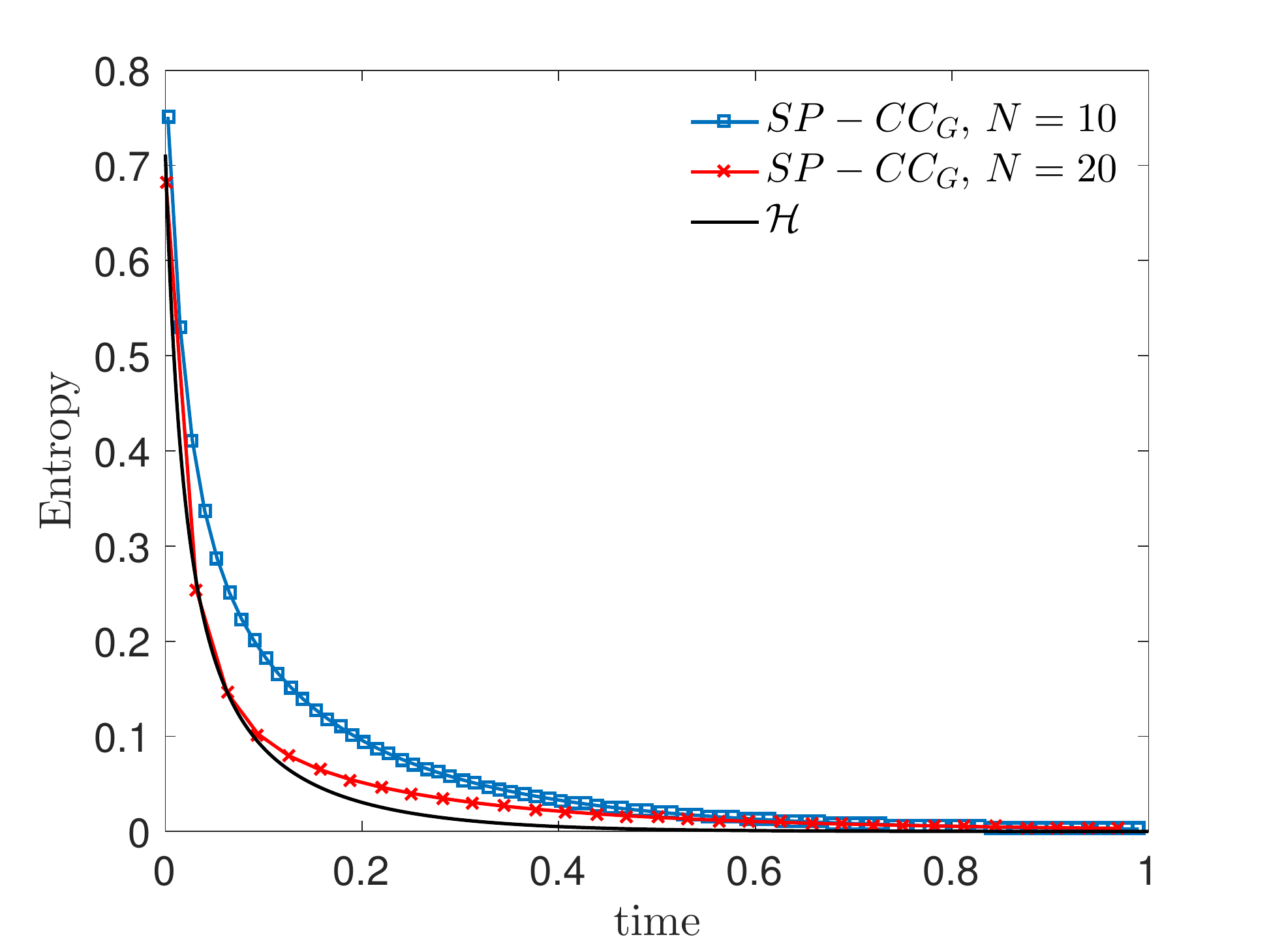}
\caption{{Example 1}. Left: evolution of the relative $L^1$ error with respect to the stationary solution \eqref{eq:opinion_stat} for the SP--CC scheme with different quadrature methods. Solution for the initial data \eqref{eq:opinion_initial} over the time interval $[0,10]$, $\sigma^2/2=0.1$, $N=80$, $\Delta t = \Delta w^2/2\sigma^2$. Right: dissipation of the numerical entropy for SP--CC scheme with Gaussian quadrature for two coarse grids with $N=10$ and $N=20$ points. }\label{fig:example1_L1ent}
\end{figure}

In Table \ref{tab:opinion} we estimate the overall order of convergence of explicit SP--CC schemes for several integration methods. Here we used $N=40,80$, with reference solutions computed with $N=640$ points. For comparison, the time integration has been performed with the explicit Euler and RK4 methods and the time step chosen in such a way that the CFL condition for the positivity of the scheme is satisfied, i.e. $\Delta t= O(\Delta w^2)$. As expected the two methods are essentially equivalent and both schemes are second order accurate in the transient regimes and assume the order of the quadrature method close to the steady state. 

In Table \ref{tab:opinion_SI}  we estimate the order of convergence of SP--CC schemes with first and second order semi--implicit methods (see Appendix \ref{appendix:A} for a detailed description of the methods). In this case the CFL condition is $\Delta t= O(\Delta w)$ and a second order method is necessary to achieve second order accuracy in the transient regime, whereas both schemes achieve higher order accordingly to the quadrature used for large times.

\begin{table}
\begin{center}
\begin{tabular}{ c || c c c c  | c c c c }
\hline
 & \multicolumn{4}{c} {$SP-CC_k$}  &  \multicolumn{4}{c} {$SP-CC_k$} \\ \hline


             Time         &     2   &  4  &  6  & G &     2   &  4  &  6  & G    \\ 
             \hline

\multirow{1}{*} {1} & 1.9456 & 1.9751  & 1.9740 & 1.9740
						& 1.9470 &  1.9773 & 1.9762 & 1.9762 \\
                             \hline
\multirow{1}{*}{5}  & 1.9700 & 3.2328 & 2.3690 & 2.3487
						& 1.9700 & 3.2323 & 2.3724 & 2.3522  \\
                             \hline
\multirow{1}{*}{10} & 1.9695 & 3.9156 & 6.8498 & 7.3299 
						 & 1.9695  & 3.9156 & 6.8517 & 7.3252  \\
                             \hline
\multirow{1}{*}{15} & 1.9695 & 3.9156 & 6.8715 & 7.3304
						&  1.9695 & 3.9156 & 6.8761 & 7.3223
						   \\
                             \hline
\end{tabular}
\caption{{Example 1}. Estimation of the order of convergence toward the reference stationary state for SP--CC scheme with explicit Euler (left) and RK4 (right) methods. Rates have been computed using $N=40,80$ and a reference solution  with $N=640$, $\sigma^2/2=0.1$, $\Delta t = \Delta w^2/2\sigma^2$. }
\label{tab:opinion}
\end{center}
\end{table}

\begin{table}
\begin{center}
\begin{tabular}{ c || c c c c  | c c c c }
\hline
 & \multicolumn{4}{c} {$SP-CC_k$}  &  \multicolumn{4}{c} {$SP-CC_k$} \\ \hline

             Time         &     2   &  4  &  6  & G &     2   &  4  &  6  & G    \\ 
             \hline
                                      
\multirow{1}{*} {1} & 1.0681 & 1.0648 & 1.0648 & 1.0648 
						& 2.0991 &  2.0931 & 2.0932 & 2.0932 \\
                             \hline
\multirow{1}{*}{5}  & 2.0093 & 1.9760 & 1.9648 & 1.9648
						& 2.0700 & 2.8755 & 2.6082 & 2.6092  \\
                             \hline
\multirow{1}{*}{10} & 2.0155 & 3.9714 & 3.4966 & 3.2983
						& 2.0700  & 4.0006 & 6.0768 & 7.4096  \\
                             \hline
\multirow{1}{*}{15} & 2.0155 & 3.9776 & 5.3045 & 7.3283 
						& 2.0700 & 3.9982  & 5.8780 & 9.0173\\
                             \hline
\end{tabular}
\caption{{Example 1}. Estimation of the order of convergence toward the reference stationary state for SP--CC scheme with first (left) and second order (right) semi--implicit methods. Rates have been computed using $N=40,80$ and a reference solution  with $N=640$, $\sigma^2/2=0.1$, $\Delta t = \Delta w/2\sigma^2$.}
\label{tab:opinion_SI}
\end{center}
\end{table}

In the general case $P(w,w_*)\ne 1$ and it is not possible to give an analytical formulation of the steady state solution $f^{\infty}(w,t)$. In Figure \ref{fig:BC} we represent a typical evolution of an aggregation model in the bounded confidence case \cite{PT2}
\be
P(w,w_*) = \chi(|w-w_*|\le \Delta),
\ee
where $\chi(\cdot)$ is the indicator function, for $\Delta = 0.4$, $\Delta = 0.8$. Here, the evolution has been computed through a  SP--CC with Gauss quadrature, the integral $\B[f](w,t)$ has been evaluated through a trapezoidal method. 

\begin{figure}
\centering
\subfigure[]{\includegraphics[scale=0.38]{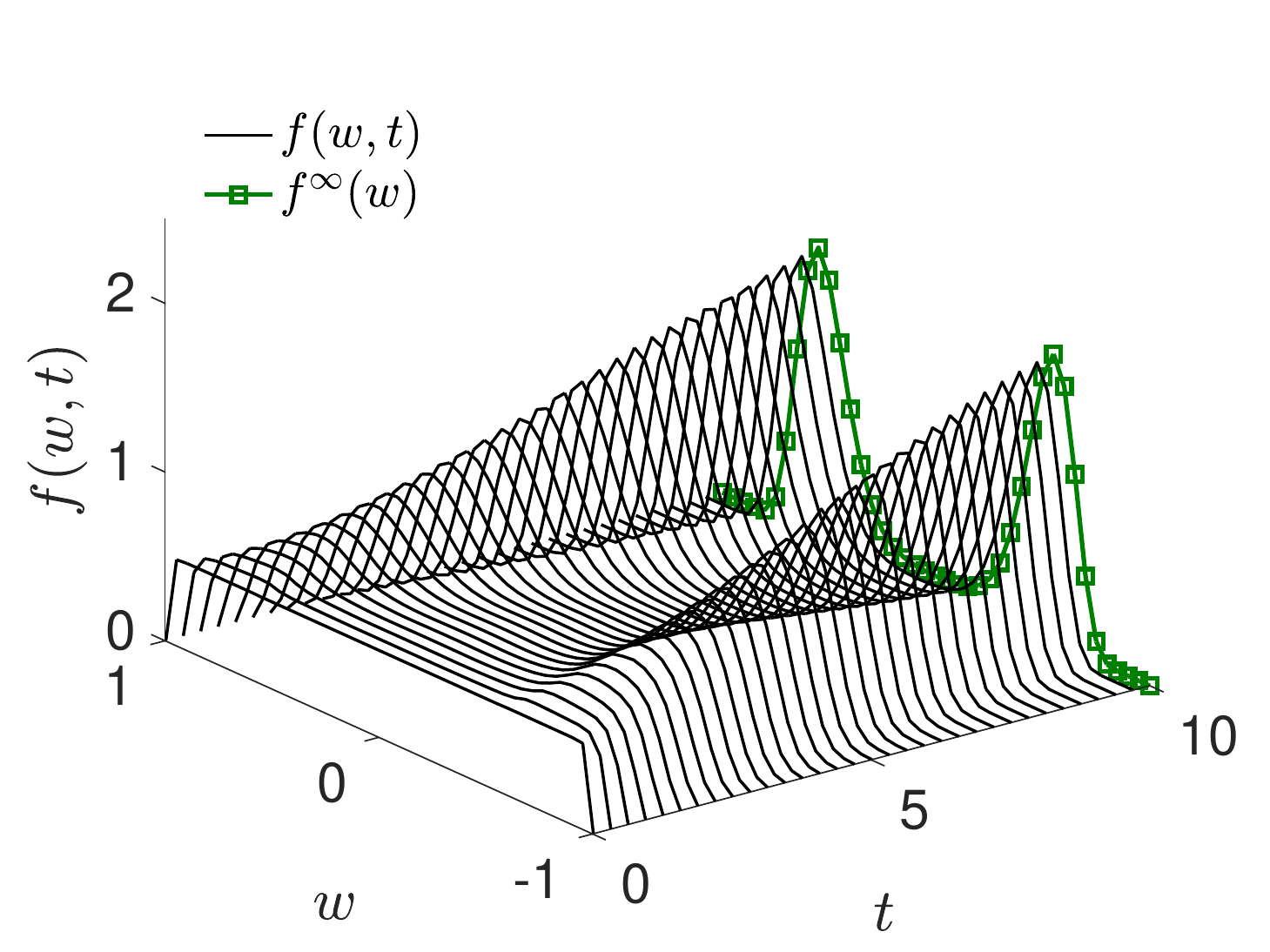}}
\subfigure[]{\includegraphics[scale=0.38]{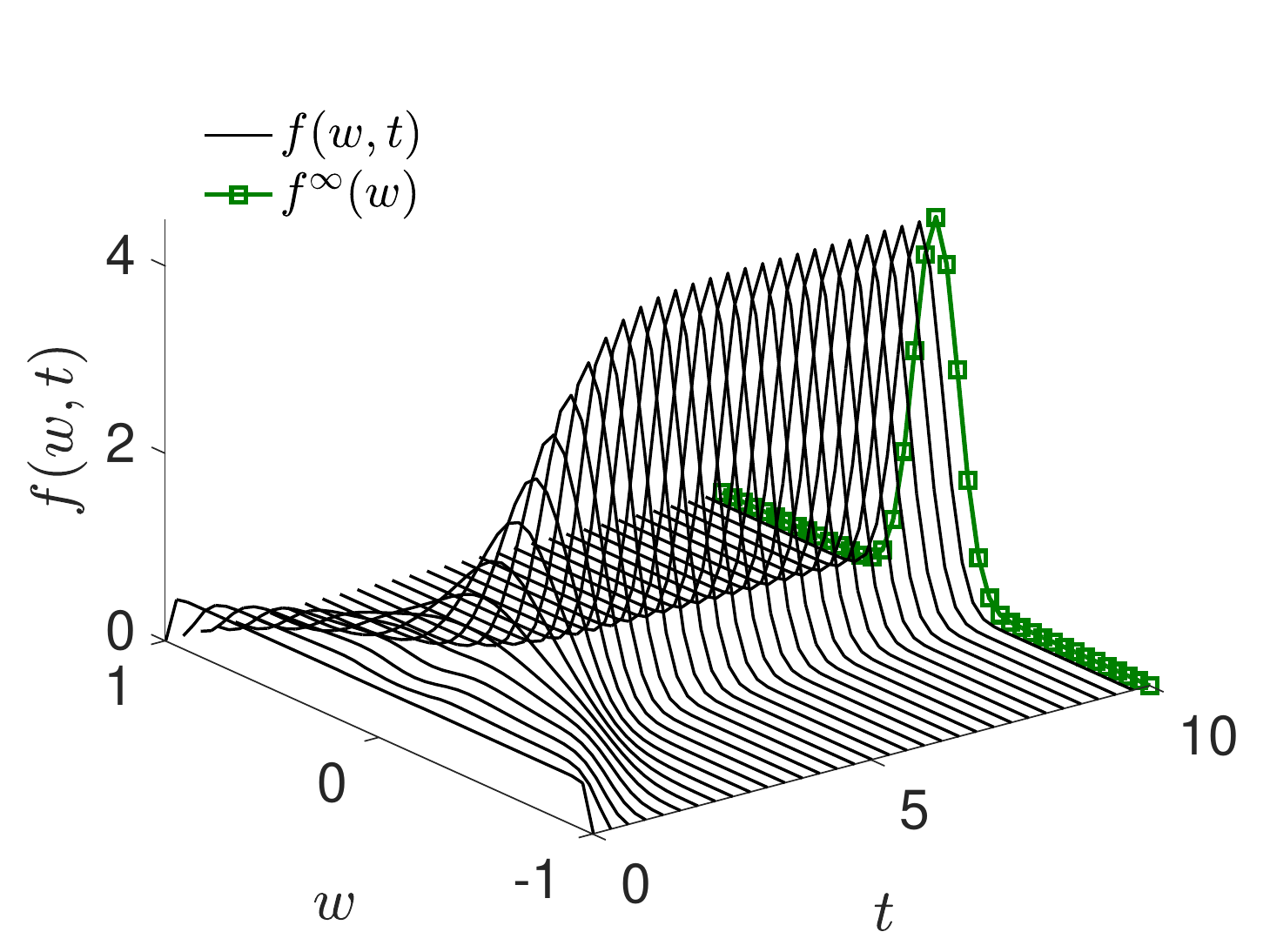}}
\caption{{Example 1}. Opinion model in the bounded confidence case with \textbf{(a)} $\Delta = 0.4$, \textbf{(b)} $\Delta = 0.8$. In both cases we considered $\Delta w=0.05$, $\sigma^2/2=0.01$, $\Delta t=\Delta w^2/2\sigma^2$. The reference stationary solution has been computed with $N = 640$ gridpoints.}
\label{fig:BC}
\end{figure}

\subsection{Example 2: Wealth evolution in unbounded domains}
Let us consider equation \eqref{eq:NAD} with $w\in\RR^+$ and
\be\label{eq:market_BD}
\B[f](w,t) = \int_{\RR^+} a(w,w_*)(w-w_*)f(w_*,t)dw_*, \qquad D(w) = \dfrac{\sigma^2}{2}w^2.
\ee
With the above choice, the Fokker-Planck equation describes the evolution of the wealth distribution $w$ at time $t$ in a large set of interacting economic agents (see \cite{NPT,PT2} for details).
 
In the case of constant interaction $a(w,w_*)\equiv 1$ the steady state of the equation is analytically computable 
\be\label{eq:market_anal}
f^{\infty}(w) = \dfrac{(\mu-1)^{\mu}}{\Gamma(\mu)w^{1+\mu}}\exp\left\{ -\dfrac{\mu-1}{w}  \right\},
\ee
where $\mu=1+2/\sigma^2$ is the so-called Pareto exponent. In the numerical test we consider the initial distribution
\be\label{eq:initial_market}
f(w,0) = \beta \left[ \exp\left(-c(w-u)^2\right) \right],\qquad c=20, 
\ee
with $\beta>0$ a normalization constant.

Again, due to degeneracy of the diffusion on the left boundary we report results only for SP--CC schemes.
In Figure \ref{fig:market_sigma} we present the solution with $u=1$ in the domain $[0,L]$, $L=10$. In both figures $a(\cdot,\cdot)= 1$ whereas the diffusion constant assumes different values. We report the evolution of the solution and the relative $L^1$ error with respect to the stationary state using $N=201$ points for the semi--implicit SP--CC scheme (SISP--CC). We observe how the introduced methods describe the stationary state with different levels of accuracy. Note that, at the right boundary we must introduce an artificial boundary condition in order to truncate the computational domain. In our numerical results we impose the quasi stationary condition \eqref{eq:quasi_SS} in order to evaluate $f_{N+1}(t)$, that is
\[
\dfrac{f_{N+1}(t)}{f_N(t)} = \exp\Big\{ -\int_{w_{N}}^{w_{N+1}} \dfrac{\B[f]+D(w)}{D(w)}dw \Big\}.
\]
In Table \ref{tab:market} we estimate the overall order of convergence of the semi--implicit SP--CC scheme for several integration methods with $N=51,101$ for the domain $[0,L]$, $L=10$, with reference solutions computed with $N=1601$ gridpoints. The time step is chosen in such a way that the CFL condition for the positivity of the scheme is satisfied, i.e. $\Delta t= O(\Delta w)$. We can observe that for short times the order of accuracy is limited by the semi--implicit method, which is first order accurate, whereas as we approach to the stationary solution the order depends on the quadrature formula used.  

\begin{figure}\label{fig:market_sigma}
\centering
\includegraphics[scale=0.4]{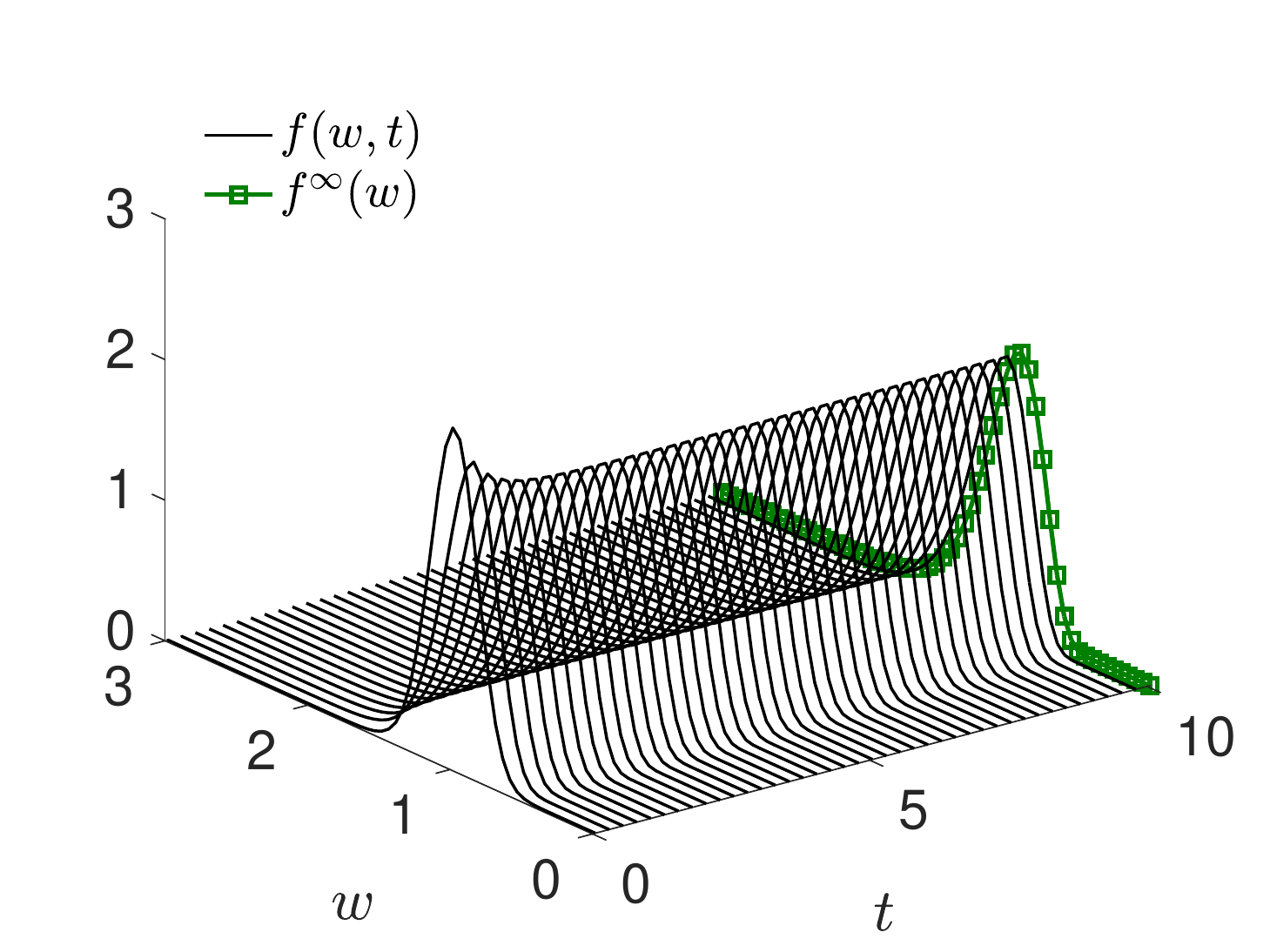}
\includegraphics[scale=0.295]{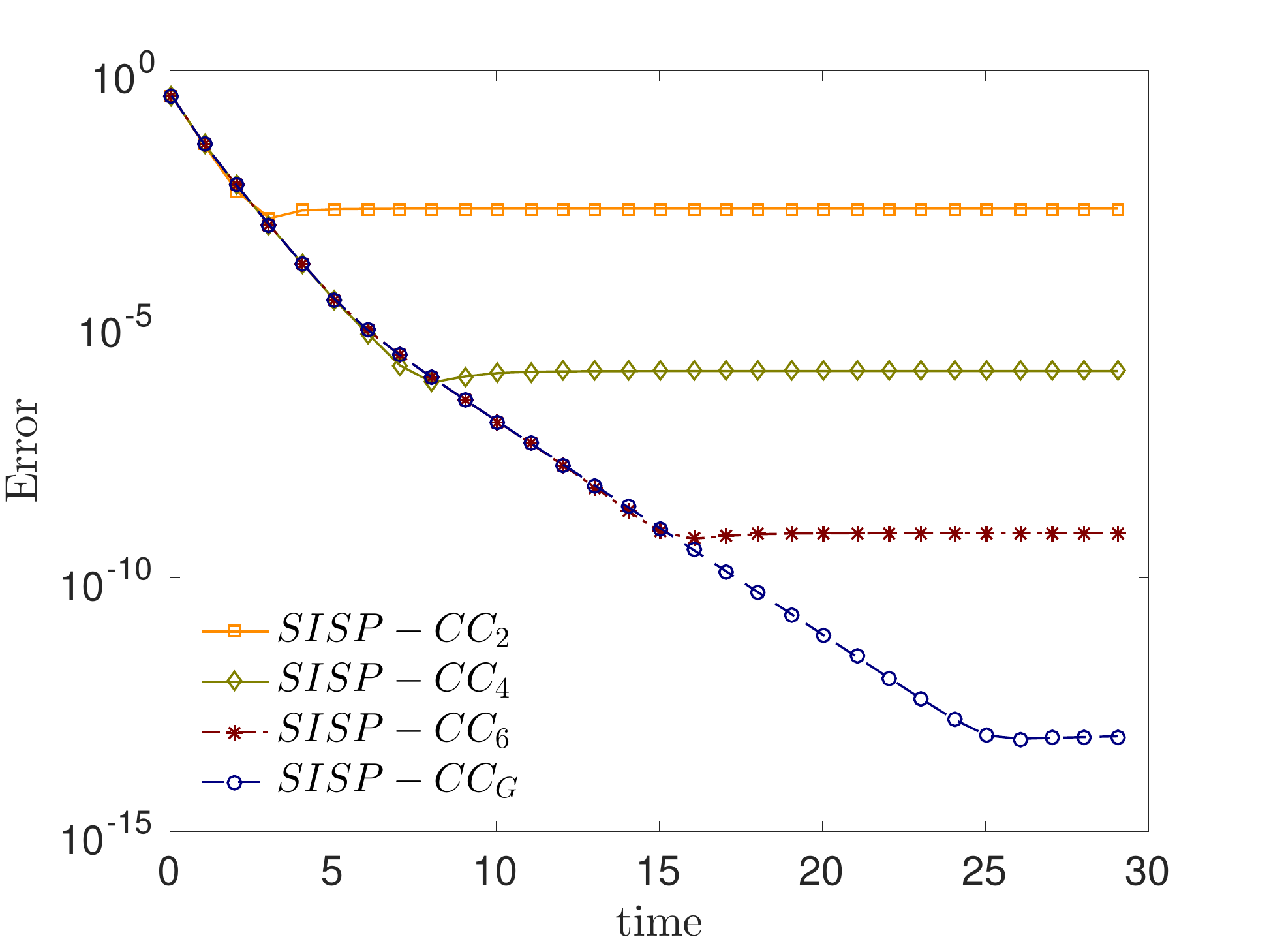}
\caption{{Example 2}. Left: evolution of the density $f(w,t)$ for \eqref{eq:market_BD} with $P(\cdot,\cdot)=1$, $u=1$, $\sigma^2/2=0.1$, $L=10$. In green we report the analytical steady state solution \eqref{eq:market_anal}. Right: evolution of the relative $L^1$ error for the different quadratures methods for the semi--implicit SP--CC scheme in the case $a(\cdot,\cdot)=1$, $\sigma^2/2=0.1$ and $\Delta w = 0.05$}\label{fig:market_1}
\end{figure}

\begin{table}
\begin{center}
\begin{tabular}{ c || c c c c  }
\hline
 & \multicolumn{4}{c}{$SP-CC_k$} \\
\hline

             Time                         & 2  &  4 & 6 & G  \\ \hline
                                      
\multirow{1}{*} {1} 
                             & 1.3047 & 1.5010 & 1.5021 & 1.5021 \\
                             \hline
\multirow{1}{*}{10}
                             & 1.9893 & 4.0634 & 2.8122 & 2.8682 \\
                             \hline
\multirow{1}{*}{20}
                             & 1.9894 & 3.9842 & 6.0784 & 10.0422\\
                             \hline
\end{tabular}
\caption{{Example 2}. Estimation of the order of convergence toward the reference stationary state for the semi--implicit SP-CC scheme, $N=51,101$, reference solution computed with $N=1601$, $\sigma^2/2=0.1$.}
\label{tab:market}
\end{center}
\end{table}

\subsection{Example 3: 2D model of swarming}
Let us consider a self-propelled swarming model of Cucker-Smale type \cite{BCCD} with diffusion. In this model the evolving distribution $f(x,w,t)$ represents the density of individuals (birds, fishes, $\ldots$) in position $x\in\RR^d$ having velocity $w\in\RR^d$ at time $t>0$. 
We have the following dynamics
\be\begin{split}\label{eq:model_2D}
\partial_t f(x,w,t)+w\nabla_x f(x,w,t) =& \nabla_w \cdot \Big[\alpha w(|w|^2-1)f(x,w,t)\\
&+(w-u_f)f(x,w,t)+D\nabla_wf(x,w,t)\Big],
\end{split}\ee
with 
\be\label{eq:K}
u_f(x,t) = \dfrac{\int_{\RR^{2d}}K(x,y)wf(y,w,t)dwdy}{\int_{\RR^{2d}}K(x,y)f(y,w,t)dwdy},
\ee
and $K(w,y)>0$ a localization kernel, $\alpha>0$ a self-propulsion term and $D>0$ a constant noise intensity. 

The space homogeneous version of the model \eqref{eq:model_2D} may be formulated in terms of the nonlinear Fokker--Planck equation \eqref{eq:NAD} with
\be\begin{split}\label{eq:BD_swarm}
\B[f](w,t) &= \alpha w(|w|^2-1)+ \int_{\RR^2} P(w,w_*)(w-w_*)f(w_*,t)dw_* ,\\
D(w)& = D,
\end{split}\ee
with $\alpha$ a positive constant and $P(w,w_*)\equiv 1$. The above equation can be written as a gradient flow. In fact, if we define
\be
\xi(w,t) = \Phi(w) + (U*f)(w,t) + D\log f(w,t),
\ee 
with $U(w)$ a Coloumb potential and $\Phi(w)$ a confining potential given by
\be
\Phi(w) = \alpha \left(\dfrac{|w|^4}{4}-\dfrac{|w|^2}{2}\right),
\ee
the equation reads
\be
\partial_t f(w,t) = \nabla_w \cdot \left(f(w,t) \nabla_w \xi(w,t)\right), \qquad w\in\RR^2.
\ee
A free energy functional which dissipates along solutions is defined by
\[
\mathcal E(t) = \int_{\RR^2} \left( \alpha\dfrac{|w|^4}{4}+(1-\alpha)\dfrac{|w|^2}{2} \right)f(w,t)dw-\dfrac{1}{2}|u_f|^2+D\int_{\RR^2} f(w,t)\log f(w,t) dw,
\]
with 
\[
u_f(t) = \dfrac{\int_{\RR^2}w f(w,t)dw}{\int_{\RR^2}f(w,t)dw}. 
\]
Stationary solutions should satisfy the identity $\nabla_w \xi= 0$ and have the form
\be\label{eq:stationary_swarming}
f^{\infty}(w) = C \exp\Bigg\{ -\dfrac{1}{D}\left[ \alpha \dfrac{|w|^4}{4}+(1-\alpha)\dfrac{|w|^2} {2}-u_{f^{\infty}}\cdot w \right] \Bigg\},
\ee
with $C>0$ a normalization constant. It is possible to prove the following result (see \cite{BCCD} for more details).
\begin{theorem}\label{th:1c}
Let us consider equation \eqref{eq:model_2D} in the space-homogeneous case, i.e. \eqref{eq:NAD} with $\B[f](w,t)$ and diffusion as in \eqref{eq:BD_swarm}, exhibits a phase transition in the following sense
\begin{itemize}
\item[i)] For small enough diffusion coefficient $D>0$ there is a function $u=u(D)$ with $\lim_{D\rightarrow 0}u(D)=1$, such that $f^{\infty}(w)$ with $u=(u(D),0,\dots,0)$ is a stationary solution of the original problem. 
\item[ii)] For large enough diffusion coefficients $D>0$ the only stationary solution is the symmetric distribution given by \eqref{eq:stationary_swarming} with $u_f \equiv 0$. 
\end{itemize}
\end{theorem}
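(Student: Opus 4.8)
The plan is to reduce the statement to a finite-dimensional fixed point problem and then analyse it in the two asymptotic regimes $D\to 0$ and $D\to\infty$ separately. Since $\mathcal E(t)$ is nonincreasing along the flow and a stationary density must cancel the flux, hence satisfy $\nabla_w\xi\equiv 0$, every stationary solution is necessarily of the Gibbs form
\be\label{eq:plan_gibbs}
f^\infty_u(w)=\frac{1}{Z(u)}\exp\!\Big\{-\tfrac1D\,V_u(w)\Big\},\qquad V_u(w)=\alpha\tfrac{|w|^4}{4}+(1-\alpha)\tfrac{|w|^2}{2}-u\cdot w,
\ee
with $Z(u)=\int_{\RR^2}e^{-V_u(w)/D}\,dw$, and the parameter $u$ must obey the self-consistency relation $u=m(u)$, where $m(u):=\int_{\RR^2}w\,f^\infty_u(w)\,dw$ is the barycenter of $f^\infty_u$; conversely any such $u$ produces a stationary solution. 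So the statement reduces to counting the fixed points of the smooth map $m:\RR^2\to\RR^2$. Since the confining part of $V_u$ is radially symmetric, $m$ is equivariant under rotations, so one may take $u=(r,0)$ with $r\ge 0$: then $m(u)=(g(r;D),0)$ with $g(\cdot;D)$ smooth, odd, and $g(0;D)=0$, so that $r=0$ — that is, the symmetric profile \eqref{eq:stationary_swarming} with $u_f\equiv 0$ — is always stationary, and everything comes down to the sign and the zeros of $h(r;D):=g(r;D)-r$ on $r>0$.

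For part i) I would combine Laplace's method with the implicit function theorem. As $D\to 0^+$, $f^\infty_{(r,0)}$ concentrates at the global minimizer of $V_{(r,0)}$; for $(r,D)$ near $(1,0)$ this minimizer is the point $(\mu(r),0)$, where $\mu(r)>0$ is the unique positive root of $\alpha\mu^3+(1-\alpha)\mu=r$, and it is nondegenerate because the Hessian of $V_{(1,0)}$ at $(1,0)$ equals $\mathrm{diag}(1+2\alpha,\,1)\succ 0$. Laplace asymptotics for such integrals are smooth in the parameters as long as the minimizer stays unique and nondegenerate, so $g$ extends to a $C^1$ function near $(1,0)$ with $g(r;0)=\mu(r)$. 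The limiting self-consistency equation $\mu(r)=r$ reads $\alpha r(r^2-1)=0$, whose only nonnegative roots are $r=0$ and $r=1$, while at $(r,D)=(1,0)$ one has $\partial_r h=\mu'(1)-1=\tfrac{1}{1+2\alpha}-1=-\tfrac{2\alpha}{1+2\alpha}\neq 0$. The implicit function theorem then gives a $C^1$ branch $r=u(D)$ for $D$ small with $h(u(D);D)\equiv 0$ and $u(0)=1$; since $u(D)\to 1$ it is positive for $D$ small, and \eqref{eq:plan_gibbs} with $u=(u(D),0)$ is the claimed nonsymmetric stationary state.

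For part ii) the aim is to show $h(r;D)<0$ for every $r>0$ once $D$ is large enough, so that $r=0$ is the unique fixed point. Here $g(r;D)$ is the barycenter of $f^\infty_{(r,0)}\propto e^{-W(w)/D+(r/D)w_1}$, with $W(w)=\alpha\tfrac{|w|^4}{4}+(1-\alpha)\tfrac{|w|^2}{2}$, and $\partial_r g(r;D)=\tfrac1D\mathrm{Var}_{f^\infty_{(r,0)}}(w_1)$. After the rescaling $w=D^{1/4}v$ one gets $g(r;D)=D^{1/4}\,\mathbb E[v_1]$, with barycenter taken under a measure proportional to $e^{-\alpha|v|^4/4-(1-\alpha)|v|^2/(2\sqrt D)+s\,v_1}$ and effective tilt $s:=r/D^{3/4}$. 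I would then split according to whether $s\lesssim 1$ or $s\gtrsim 1$. When $s\lesssim 1$ (i.e. $r\lesssim D^{3/4}$) the $v$-measure is an $O(s)$ perturbation of the symmetric, essentially fixed, quartic measure $\propto e^{-\alpha|v|^4/4}$, whose barycenter vanishes, so $\mathbb E[v_1]=O(s)$ and $g(r;D)=O(r/\sqrt D)<r$ for $D$ large. When $s\gtrsim 1$ the super-quartic confinement forces concentration near $v_1=(s/\alpha)^{1/3}$, giving $\mathbb E[v_1]=O(s^{1/3})$, hence $g(r;D)=O(r^{1/3})<r$ for $r$ large, which is automatic in this range once $D$ is large. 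These two estimates cover all $r>0$, so $h(r;D)<0$ there and \eqref{eq:stationary_swarming} with $u_f\equiv 0$ is the unique stationary solution.

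The hard part is exactly this last estimate in part ii). One cannot simply invoke a global Brascamp–Lieb or Poincaré inequality for $f^\infty_{(r,0)}$, nor argue by monotonicity $\partial_r g<1$: the potential $V_{(r,0)}$ is not uniformly convex, so when $\alpha<1$ the naive bound only gives $\mathrm{Var}_{f^\infty_{(r,0)}}(w_1)\le D/(1-\alpha)$, i.e. $\partial_r g\le 1/(1-\alpha)>1$, and for $\alpha\ge 1$ even convexity fails. The improvement requires exploiting that the relevant mass sits at scale $|w|\sim D^{1/4}$, where the confining potential is strongly convex, and making the resulting control uniform in $r$ (equivalently, in the tilt $s$). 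By comparison part i) is fairly routine, the only delicate point being the $C^1$ dependence of the Laplace-type integral on $(r,D)$ up to $D=0$, which rests on $V_{(r,0)}$ having a unique, nondegenerate global minimizer near $(1,0)$.
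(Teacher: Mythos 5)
The paper itself gives no proof of Theorem \ref{th:1c}: it is quoted from the reference \cite{BCCD}, and your reduction --- free-energy dissipation forcing $\nabla_w\xi=0$, hence Gibbs-type stationary states \eqref{eq:stationary_swarming} subject to the self-consistency condition $u=\int w f^{\infty}_u\,dw$, then rotational reduction to a scalar fixed-point equation analysed by Laplace asymptotics plus the implicit function theorem for $D\to0$ and by uniform moment/variance bounds after the $w=D^{1/4}v$ rescaling for large $D$ --- is essentially the strategy of that cited work. Your sketch is sound, and the technical debts you flag yourself (joint $C^1$ regularity of the Laplace expansion up to $D=0$, uniformity in the tilt of the large-$D$ estimates, and, one point you pass over quickly, that in $d\ge2$ a divergence-free flux need not vanish, so the Gibbs characterization really does require the dissipation identity together with positivity/regularity of stationary solutions) are exactly where the remaining work lies.
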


\begin{table}
\begin{center}
\begin{tabular}{ c || c c c c | c c c c }
\hline
$\alpha=0$ & \multicolumn{4}{c} {$SP-CC_k$}  &  \multicolumn{4}{c} {$SP-EA_k$} \\ \hline

             Time         &     2   &  4  &  6  & G &     2   &  4  &  6  & G    \\ 
             \hline
                                      
\multirow{1}{*} {1} & 2.1387 &  2.1387 & 2.1387 & 2.1387 
                             & 2.4142 & 2.4142 & 2.4142 & 2.4142  \\
                             \hline
\multirow{1}{*}{5}  & 6.9430 & 6.9430 & 6.9430 & 6.9430  
                             & 10.0712 & 10.0712 & 10.0712 & 10.0712 \\
                             \hline
\multirow{1}{*}{10}& 20.0127  & 20.0127 & 20.0127 & 20.0127  
                             & 23.9838 & 23.9838 & 23.9838 & 23.9838  \\
                             \hline
\hline
$\alpha=1$ & \multicolumn{4}{c} {$SP-CC_k$}  &  \multicolumn{4}{c} {$SP-EA_k$} \\ \hline

             Time         &     2   &  4  &  6  & G &     2   &  4  &  6  & G    \\ 
             \hline
                                      
\multirow{1}{*} {1} & 2.5310 &  2.5310 & 2.5310 & 2.5310
                             & 2.2614 & 2.2892 & 2.2892 & 2.2892  \\
                             \hline
\multirow{1}{*}{5}  & 2.0498 & 7.6659 & 7.6659 & 7.6659  
                             & 2.0635 & 10.9818 & 10.9818 & 10.9818 \\
                             \hline
\multirow{1}{*}{10}& 2.0503  & 18.7697 & 18.7697 & 18.7697  
                             & 2.0613 & 14.8321 & 14.8321 & 14.8321  \\
                             \hline
\end{tabular}
\caption{{Example 3}. Estimation of the order of convergence for the one-dimensional swarming model for the explicit SP--CC and SP--EA over the domain $[-L,L]$ with $L=5$, $N=21,41,81$, $D=0.4$, $\Delta t = \Delta w^2/L^2$. }
\label{tab:swarming}
\end{center}
\end{table}

Since diffusion is constant, we compute the solution both using SP--CC type schemes and the entropic average schemes SP--EA. 
In Table \ref{tab:swarming} we estimate the order of convergence of the SP--CC and SP--EA schemes in the 1D case for several integration methods. We can observe how each method reach spectral accuracy in the case $\alpha=0$, in this case, in fact, all quadrature methods become exact being  the quantity $(\B[f]+D')/D$ a first order polynomial in $w$.

\begin{figure}
\centering
\includegraphics[scale=0.295]{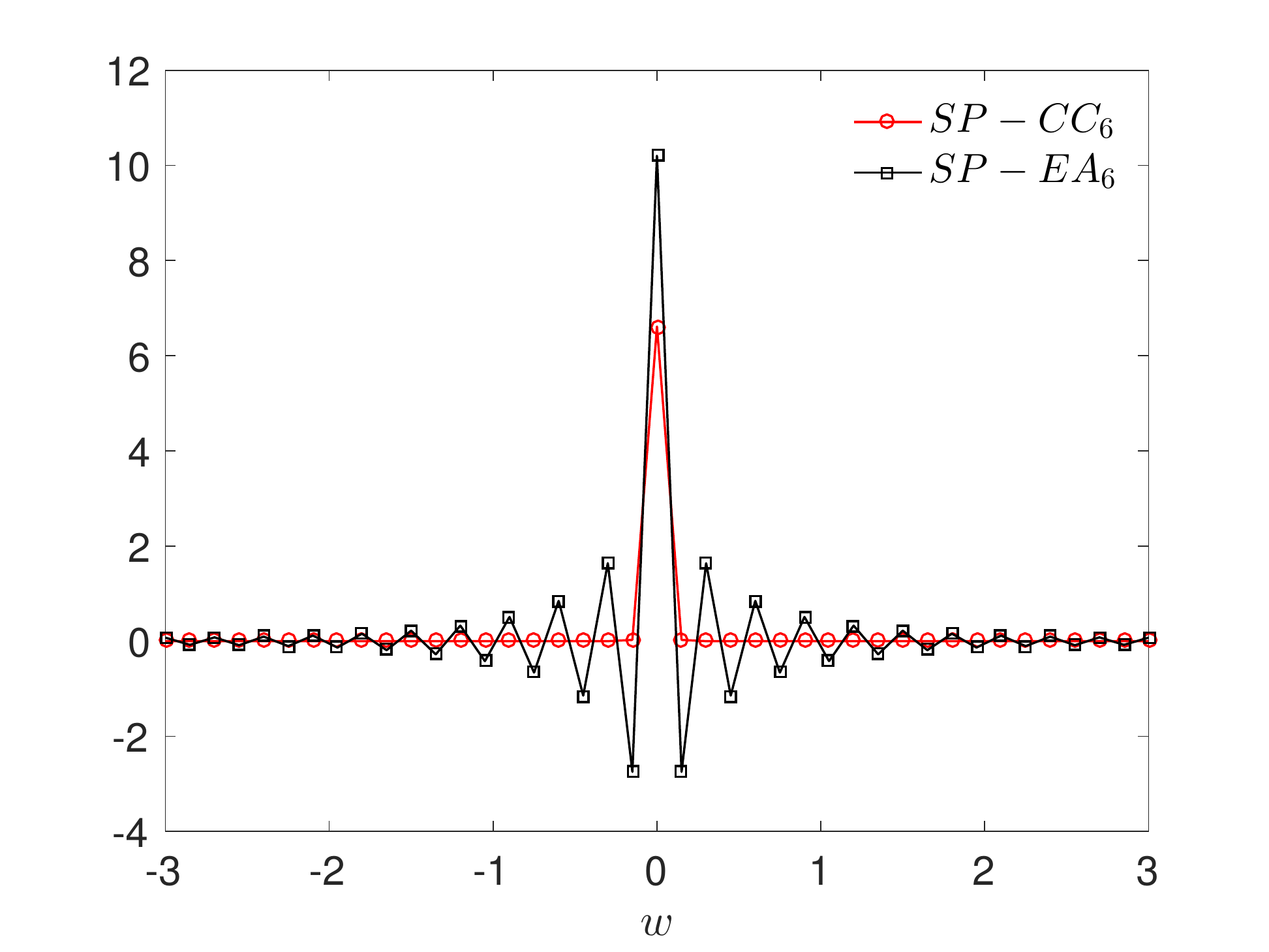}
\includegraphics[scale=0.295]{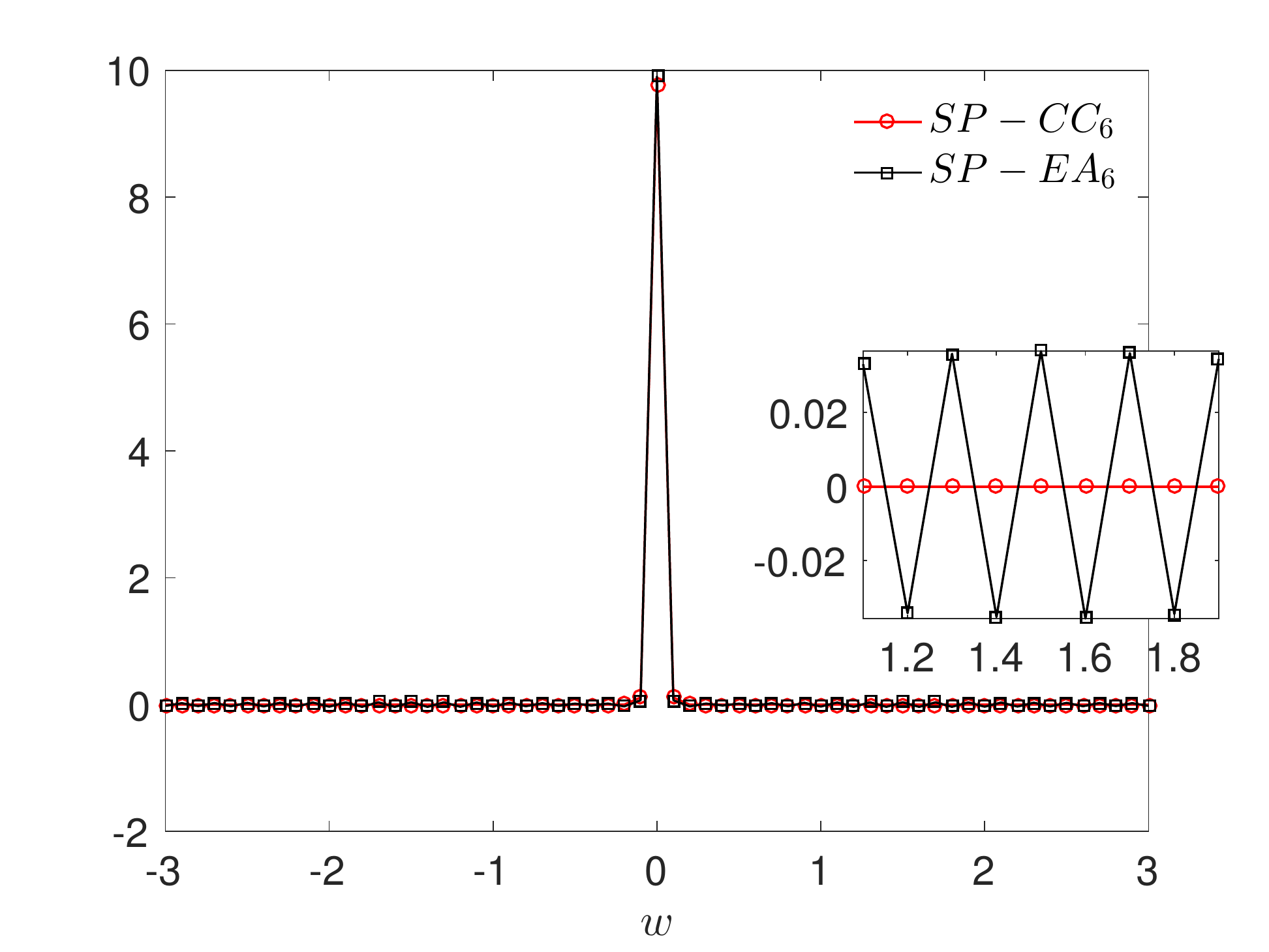}
\caption{{Example 3}. Stationary solution for the one-dimensional swarming model with $\alpha=1$ and $D=0.001$, $N=41$ (left) and $N=61$ (right). As expected the SP--EA scheme produces instabilities for a vanishing diffusion. The SP--CC scheme remains stable and first order accurate. }\label{fig:instability}
\end{figure}

In Figure \ref{fig:instability} we show that, as expected, on a coarse grid the SP--EA method becomes unstable for vanishing diffusions, whereas the SP--CC scheme remains stable and reduces to first order upwinding. In this case the solution becomes close to a Dirac delta in the velocity space. 
Finally, in Figure \ref{fig:2D_swarming} we present the resulting 2D nonlinear Fokker--Planck equation for swarming with $\B[f](w,t)$ and $D(w)$ in \eqref{eq:BD_swarm}, for several values of the diffusion coefficient $D=0.1,0.3,0.5$, and of the self-propulsion $\alpha=0,2,4$. The initial distribution is a bivariate normal distribution of the form
\[
f_0(w,v,0) = \dfrac{1}{2\pi\sqrt{\sigma^2_v\sigma^2_w}}\exp \Big\{ -\dfrac{1}{2} \Big[ \dfrac{(w-\mu_w)^2}{\sigma_w^2}+\dfrac{(v-\mu_v)^2}{\sigma_v^2} \Big] \Big\}
\] 
with $\mu_w=\mu_v=2$ and $\sigma_w^2=\sigma_v^2=0.5$. The generalization of the schemes to the multidimensional case is done dimension by dimension and is summarized in Appendix B. The semi-implicit numerical scheme has been used, with a $6$th order open Newton--Cotes quadrature method. It is possible to observe the threshold phenomenon occurring for an increasing diffusion prescribed by Theorem \ref{th:1c}. The results obtained with the two different schemes are essentially equivalent in this case.

\begin{figure}
\centering
\subfigure[$\alpha=0; D=0.1$]{\includegraphics[scale=0.24]{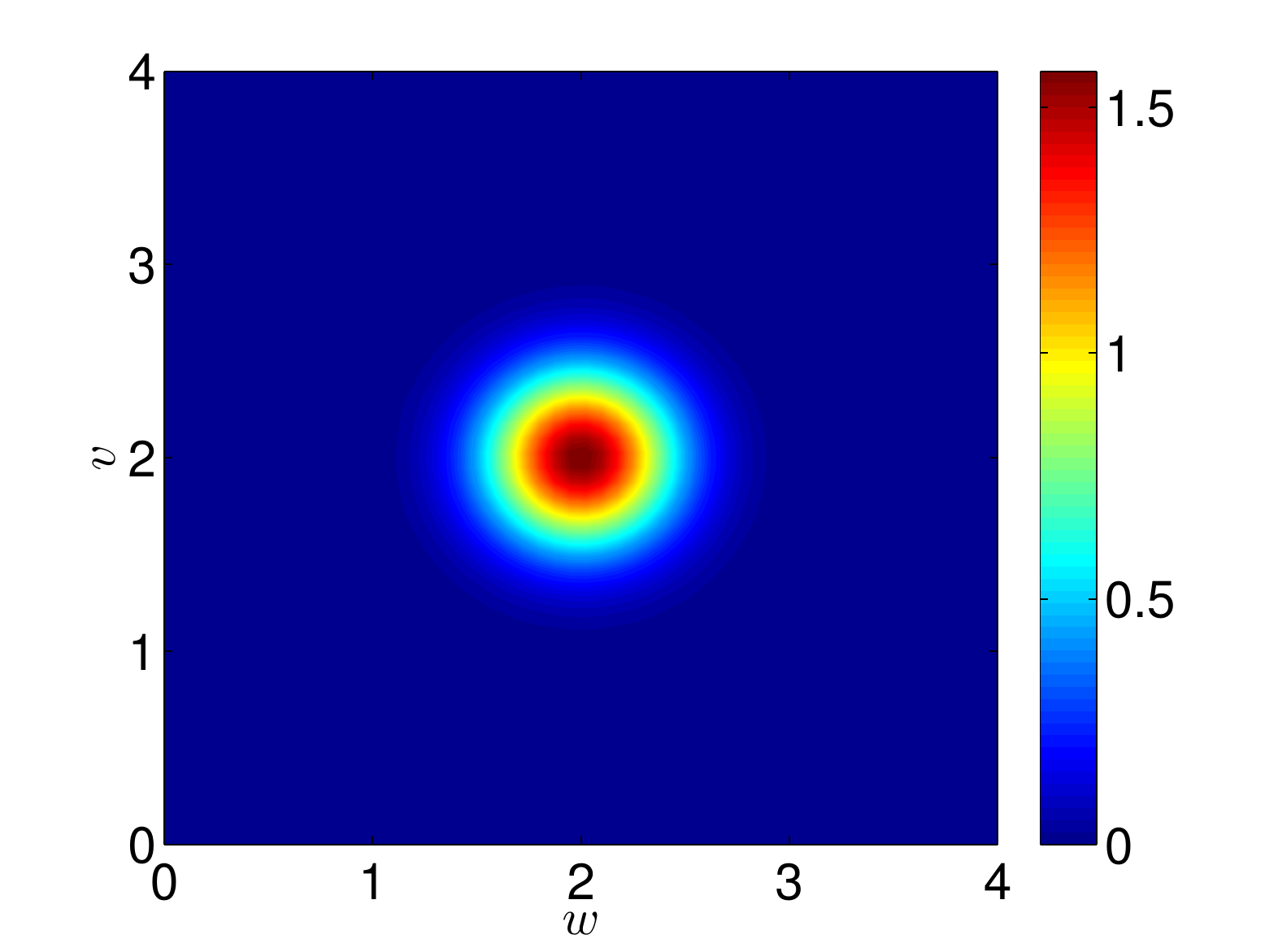}}
\subfigure[$\alpha=0; D=0.3$]{\includegraphics[scale=0.24]{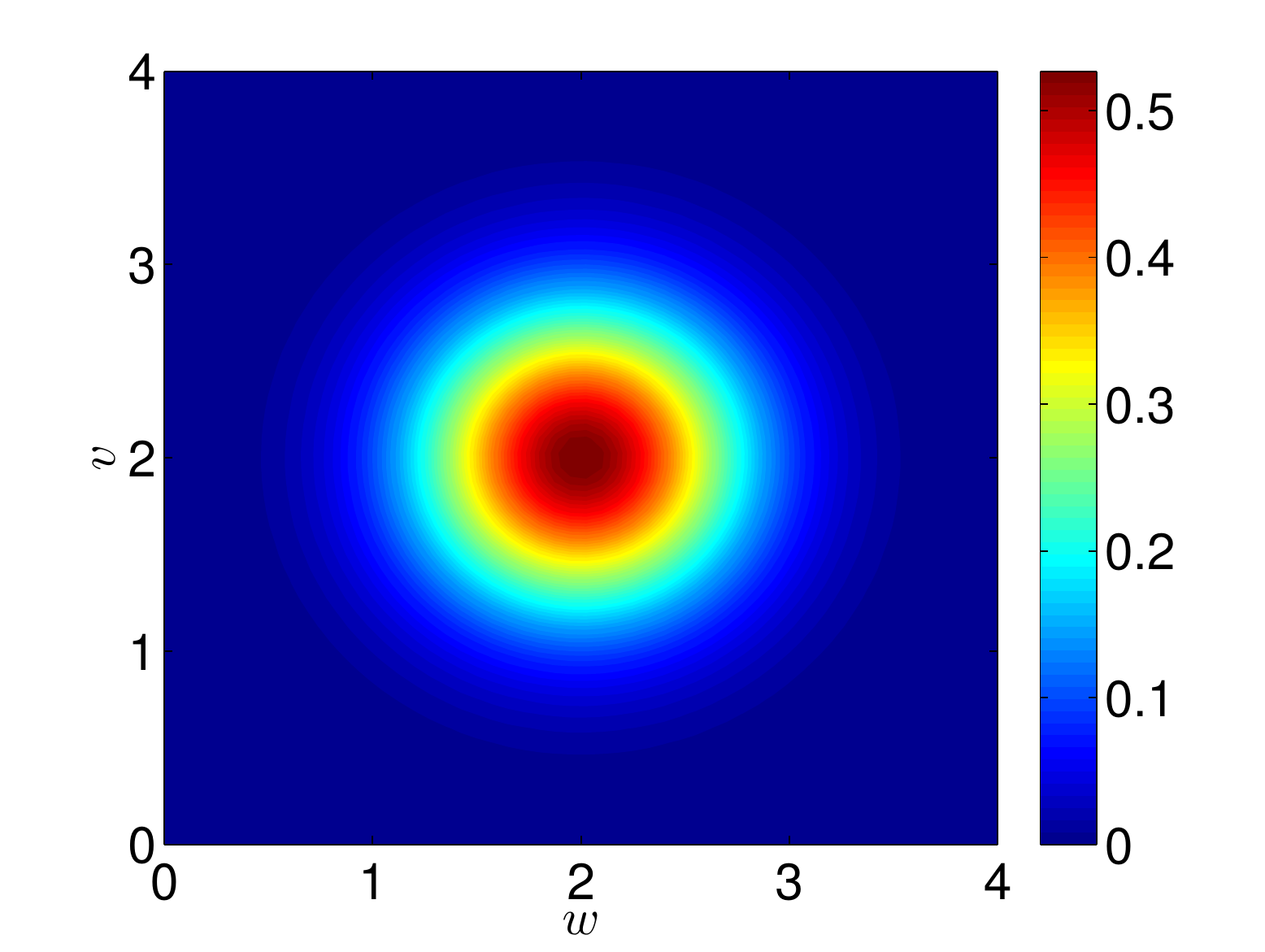}}
\subfigure[$\alpha=0; D=0.5$]{\includegraphics[scale=0.24]{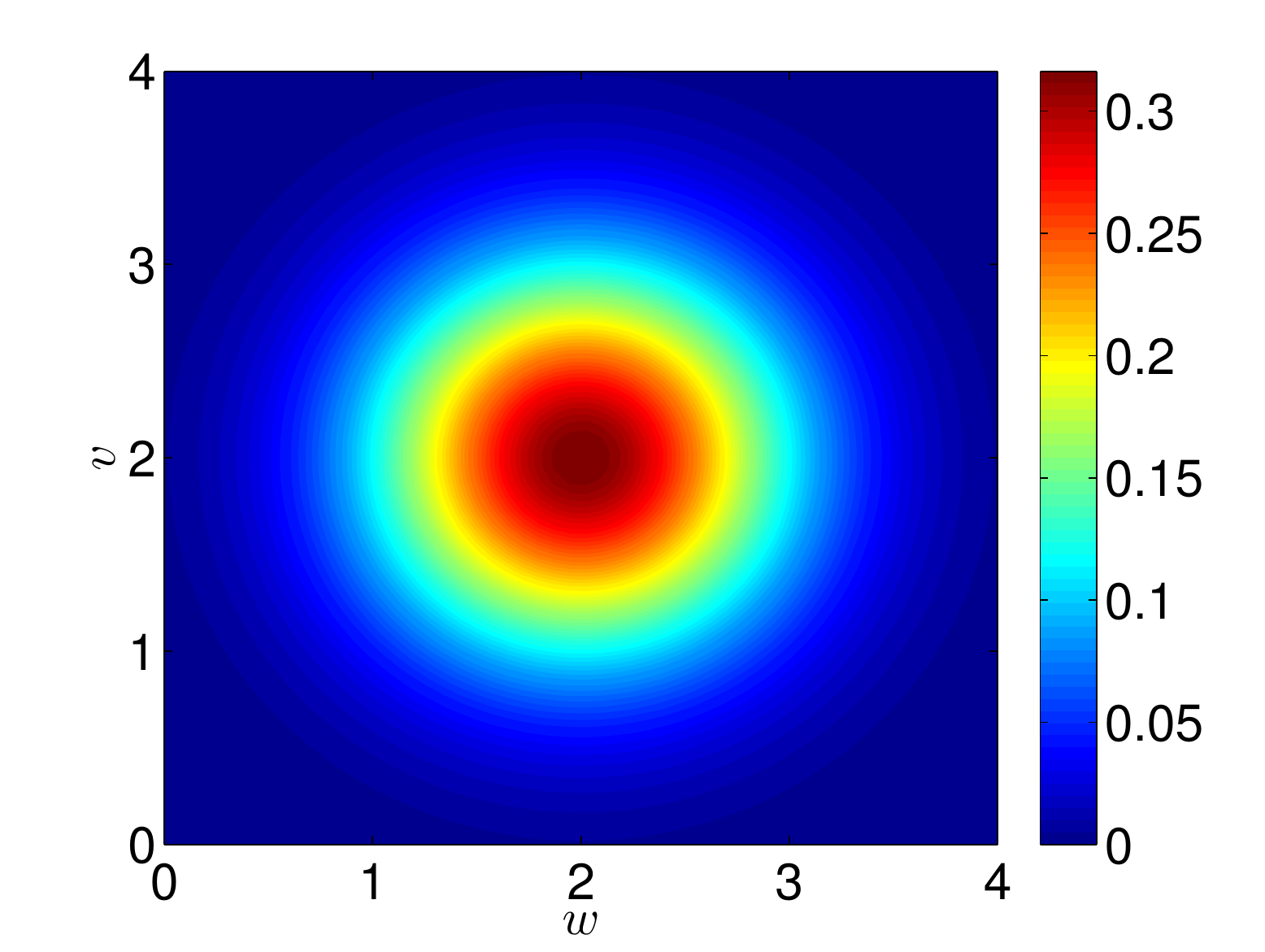}}\\
\subfigure[$\alpha=2; D=0.1$]{\includegraphics[scale=0.24]{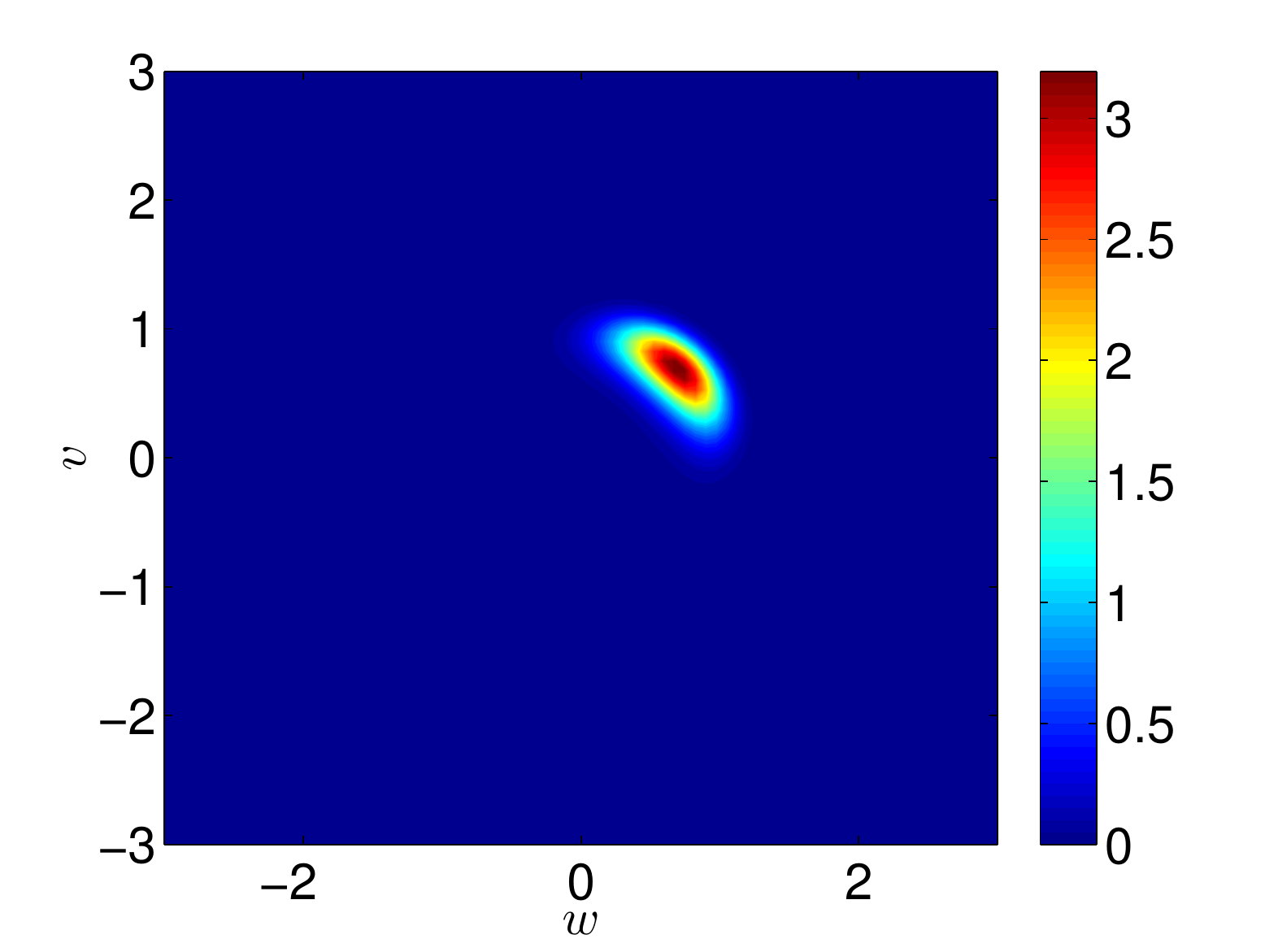}}
\subfigure[$\alpha=2; D=0.3$]{\includegraphics[scale=0.24]{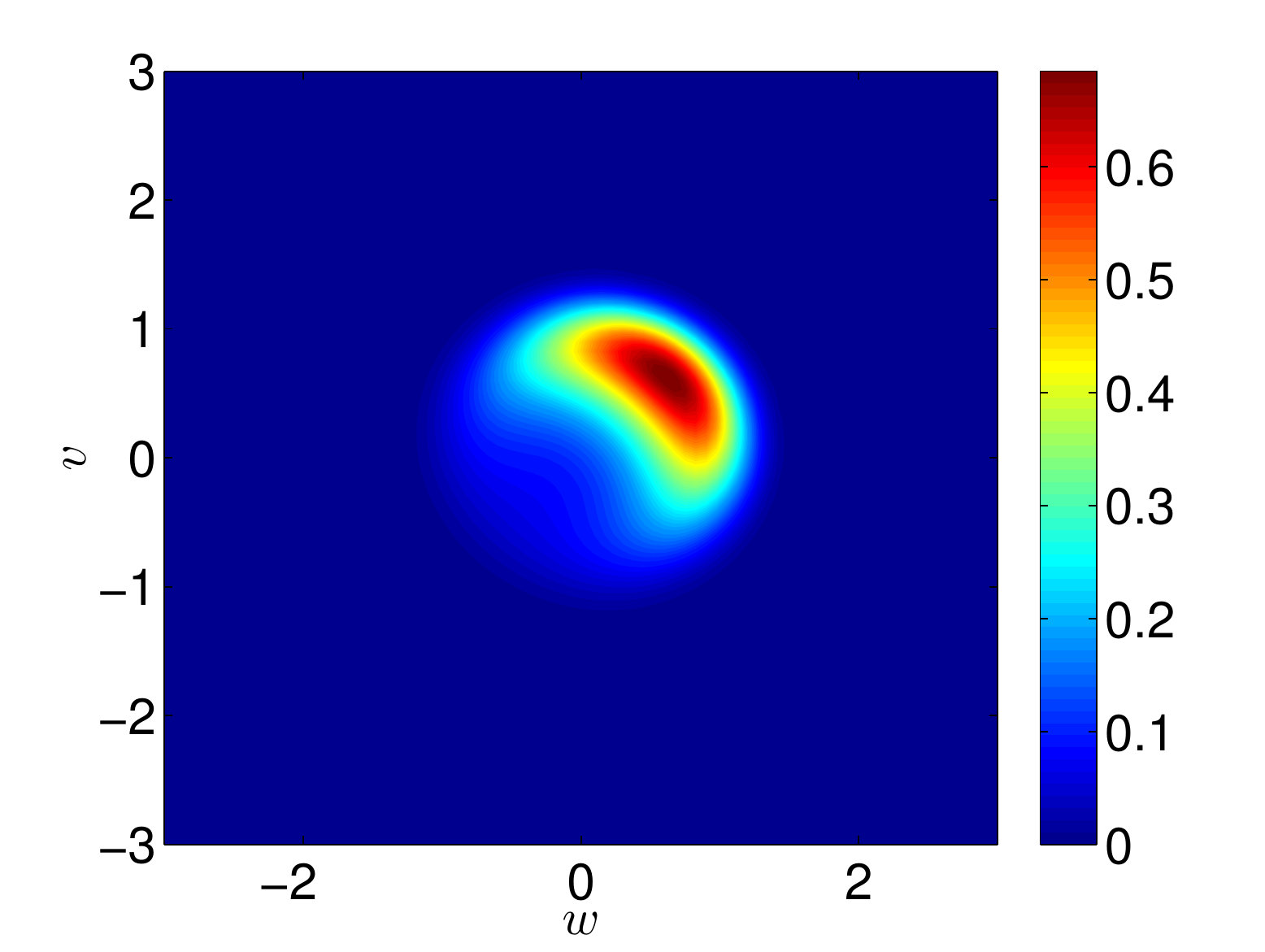}}
\subfigure[$\alpha=2; D=0.5$]{\includegraphics[scale=0.24]{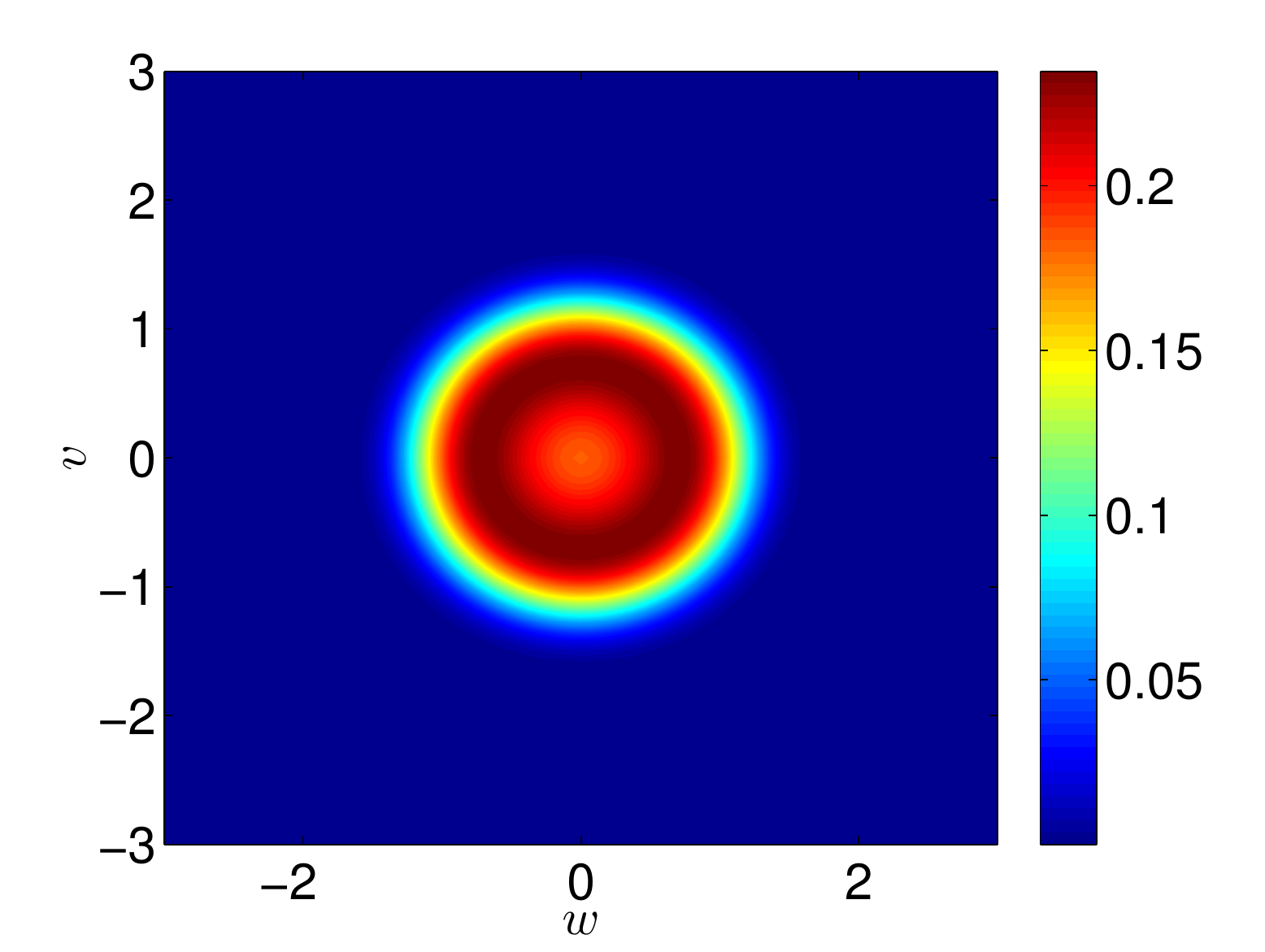}}\\
\subfigure[$\alpha=4; D=0.1$]{\includegraphics[scale=0.24]{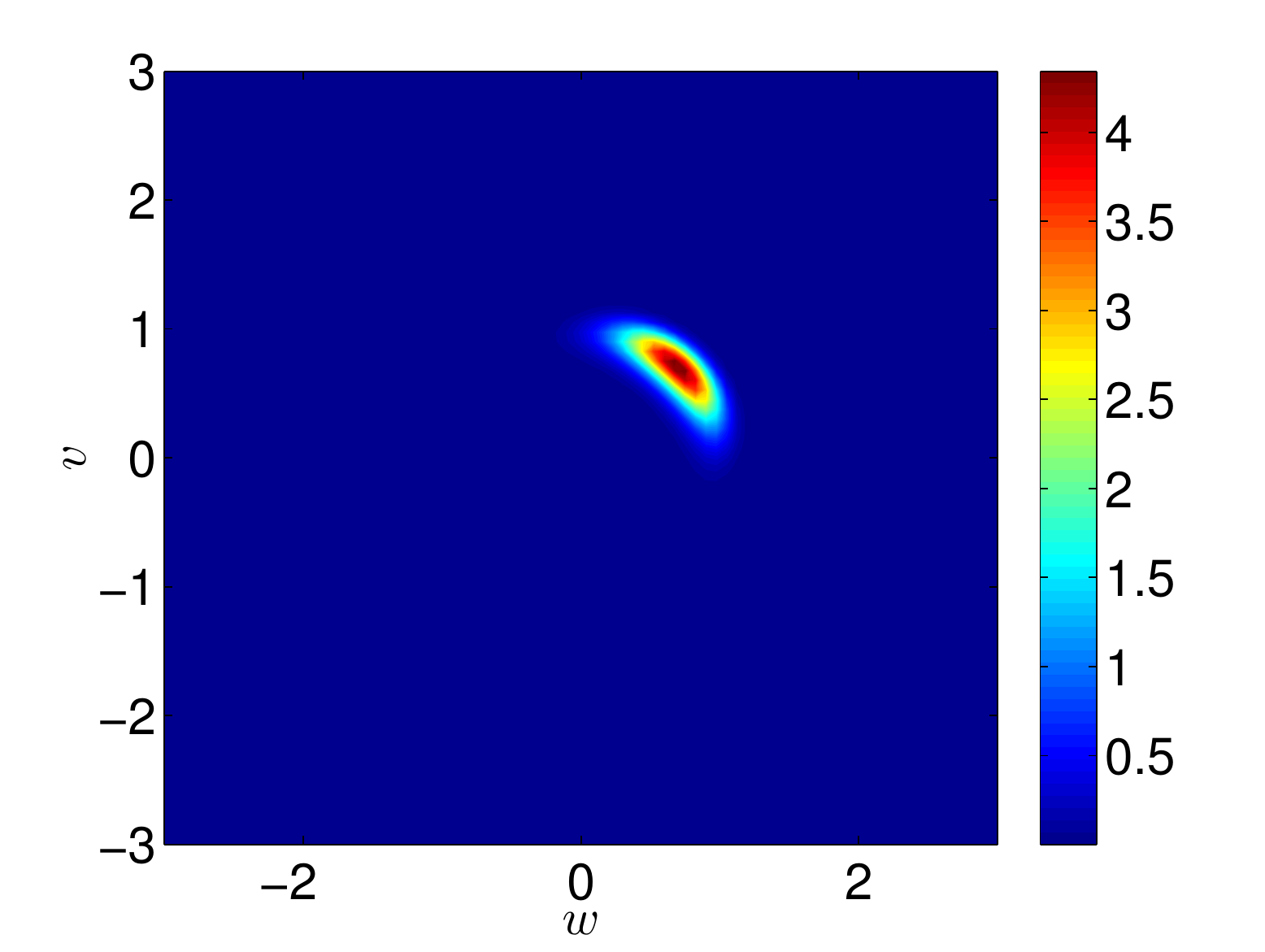}}
\subfigure[$\alpha=4; D=0.3$]{\includegraphics[scale=0.24]{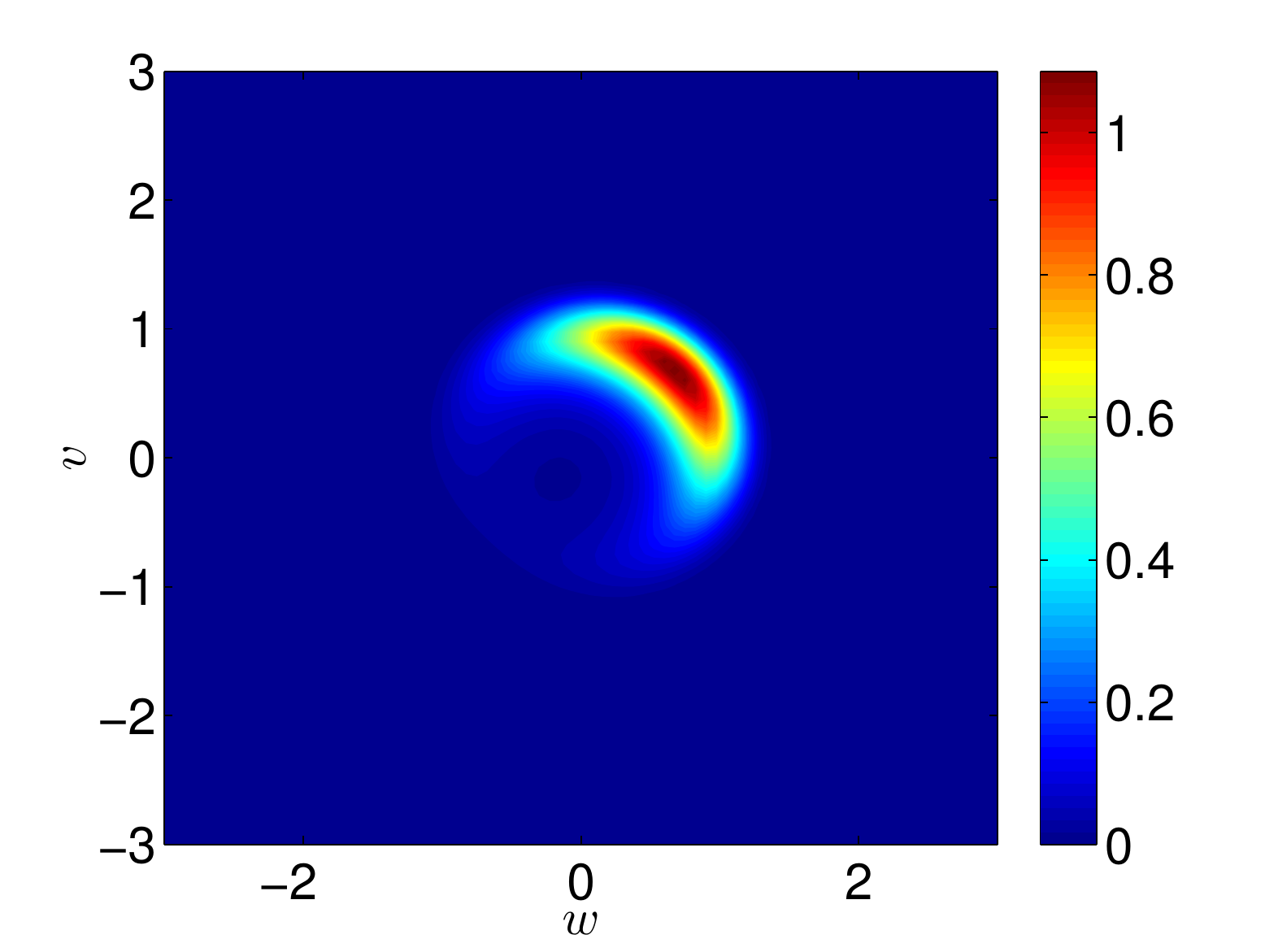}}
\subfigure[$\alpha=4; D=0.5$]{\includegraphics[scale=0.24]{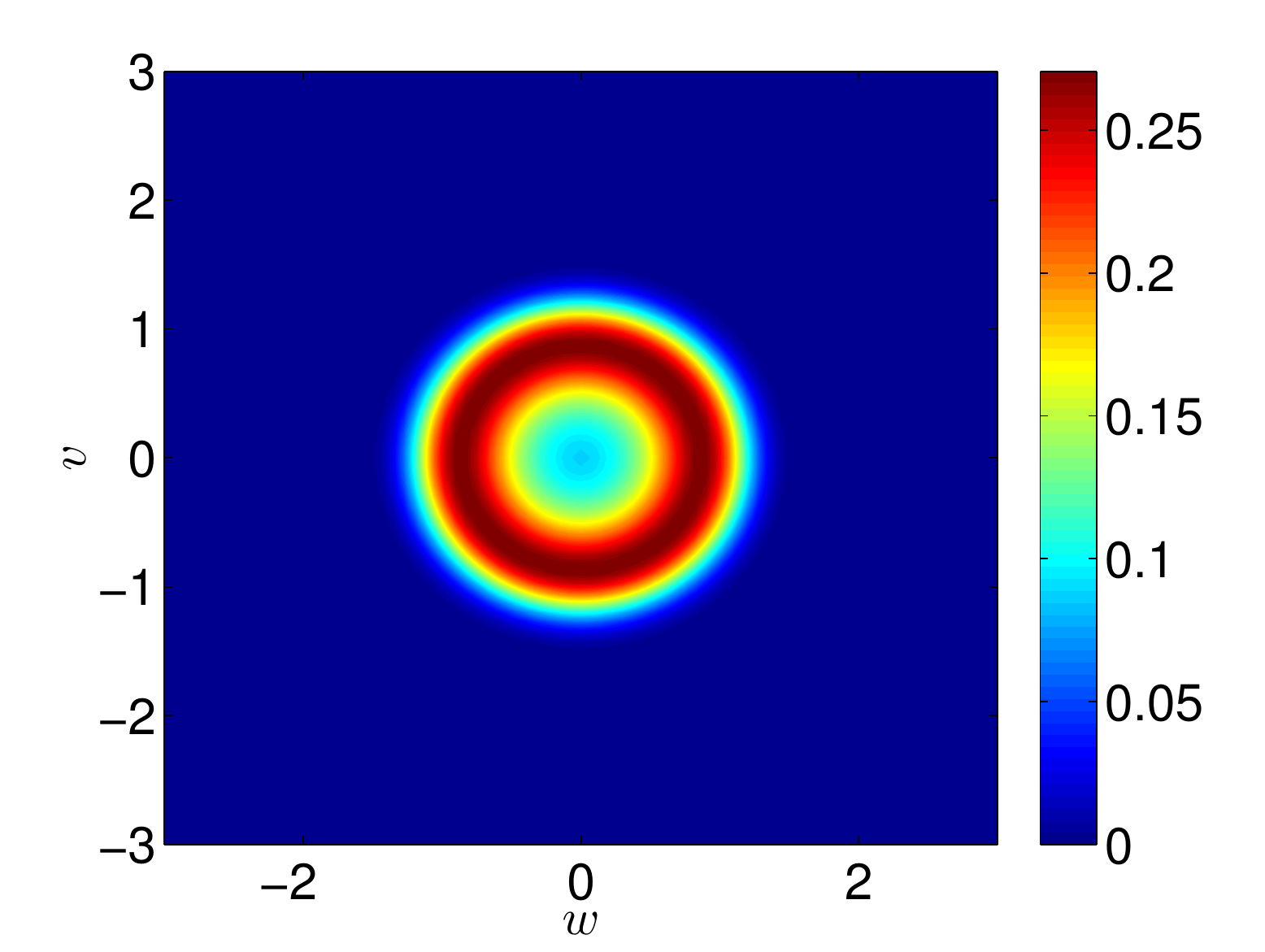}}
\caption{{Example 3}. Stationary state of the two-dimensional swarming model for several values of the diffusion coefficient $D>0$ and fixed self-propulsion $\alpha=0,2,4$. In the case $\alpha>0$ for increasing values of $D$ the mean of the stationary distribution approaches to $[0,0]$. We considered a discretization $(w,v)\in[-L,L]\times[-L,L]$, $L=3$, $\Delta w=\Delta v=0.05$ and $\Delta t = dw/L$.}
\label{fig:2D_swarming}
\end{figure}

\section*{Conclusion}
The construction of structure--preserving schemes for nonlinear Fokker--Planck equations has been studied.  Two different types of schemes have been constructed. The first type represents a natural extension of the so--called Chang--Cooper scheme to the nonlinear case. The second type of schemes represents a modification with better entropy dissipation properties. 
Both methods are second order accurate and capable to preserve the stationary state with arbitrary accuracy.
However, non negativity restrictions are more severe for the second type of schemes. 
Even if the analysis is performed in the one-dimensional case, extensions to multidimensional situations are also considered. Several applications to linear and nonlinear Fokker-Planck equations arising in socio-economic sciences are presented and show the generality of the present approach. Extensions of the schemes to include nonlinear diffusion terms and higher order schemes in the limiting of vanishing diffusion are actually under study and will be presented elsewhere.

\section*{Acknowledgements}
The research that led to the present paper was partially supported by the research grant {\it Numerical methods for uncertainty quantification in hyperbolic and kinetic equations} 
of the group GNCS of INdAM. MZ acknowledges ``Compagnia di San Paolo''. 

\appendix

\section{High order semi-implicit methods}\label{appendix:A}
Here we follow the approach in \cite{BFR}. We write the semi-discrete scheme \eqref{eq:dflux} in the equivalent form 
\be\label{eq:sys}
\begin{split}
\frac{df_i}{dt}(w,t) &= {\mathcal Q}_i({\bf f},{\bf g}),\\
\frac{dg_i}{dt}(w,t) &= {\mathcal Q}_i({\bf f},{\bf g}),\\
\end{split}
\ee
where ${\bf f}=(f_0,\ldots,f_N)$, ${\bf g}=(g_0,\ldots,g_N)$, 
\be
{\mathcal Q}_i({\bf f},{\bf g}) = \frac{{\F}_{i+1/2}[{\bf f},{\bf g}]-{\F}_{i-1/2}[{\bf f},{\bf g}]}{\Delta w}
\ee
and
\be
{\F}_{i+1/2}[{\bf f},{\bf g}]= \tilde{\C}_{i+1/2}[{\bf g}] \left[ (1-\delta_{i+1/2}[{\bf g}])f_{i+1}+\delta_{i+1/2}[{\bf g}] f_i \right]+D_{i+1/2}\dfrac{f_{i+1}-f_i}{\Delta w},
\ee
with initial conditions $f_i(0)=f_0(w_i)$, $g_i(0)=f_0(w_i)$. In the above equations we used the notation $[\cdot]$ to denote the functional dependence.\\
System (\ref{eq:sys}) is then solved by an implicit-explicit (IMEX) Runge-Kutta method \cite{PR} where the variables ${f}_i$ are treated implicitly and the variables ${g}_i$ are treated explicitly. 
More precisely, using standard notations we can write an implicit-explicit Runge-Kutta scheme for (\ref{eq:sys}) as follows. First we set $f_i^n=g_i^n$ and compute for $h=1,\ldots,s$
\be
\left\lbrace
\begin{split}
F_i^h&=f_i^n+\Delta t \sum_{k=1}^{h} a_{hk} {\mathcal Q}_i({\bf F}^k,{\bf G}^k) ,\\
G_i^h&=f_i^n+\Delta t \sum_{k=1}^{h-1} {\tilde a}_{hk} {\mathcal Q}_i({\bf F}^k,{\bf G}^k),
\end{split}
\right.
\ee
where ${\bf F}^k = (F_0^k,\ldots,F_N^k)$, ${\bf G}^k = (G_0^k,\ldots,G_N^k)$  and next we update the numerical solution 
\be
\left\lbrace
\begin{split}
f_i^{n+1}&=f_i^n+\Delta t \sum_{k=1}^{s} b_k {\mathcal Q}_i({\bf F}^k,{\bf G}^k) ,\\
g_i^{n+1}&=f_i^n+\Delta t \sum_{k=1}^{s} {\tilde b}_{k} {\mathcal Q}_i({\bf F}^k,{\bf G}^k).
\end{split}
\right.
\ee
In particular, the IMEX scheme is chosen such that $b_k={\tilde b}_k$, $k=1,\ldots,s$ so that $f^{n+1}=g^{n+1}$ and therefore the duplication of the system is only apparent since there is only one set of numerical solutions. In our numerical tests we coupled the structure preserving discretizations with the second order semi-implicit scheme obtained as a combination of Heun method (explicit) and Crank-Nicolson (implicit) characterized by $s=2$ and
\be
a_{11}=0,\quad a_{21}=a_{22}=1/2,\quad \tilde a_{21}=1,\quad b_k={\tilde b}_k=1/2,\quad k=1,2.
\ee
As observed in \cite{BFR} this represents a natural
choice when dealing with convection-diffusion type equations, since the Heun method is an SSP explicit
RK method, and Crank-Nicolson is an A-stable method, widely used for diffusion problems. Higher order methods can be found in \cite{BFR,PR}.

\section{The multi-dimensional case}\label{sec:2D}
In this section we report for the sake of completeness the details of the numerical schemes for multi-dimensional situations. We consider the case of Chang-Cooper type fluxes, and to keep notations simple we restrict to two dimensional problems $d=2$. We introduce a uniform mesh $(w_i,v_j) \in\Omega\subseteq \RR^2$, with $\Delta w=w_{i+1}-w_{i}$ and $\Delta v=v_{j+1}-v_{j}$. We denote by $w_{i+1/2}=w_i+\Delta w/2$ and $v_{j+1/2}=v_j+\Delta v/2$. Let $f_{i,j}(t)$ be an approximation of the solution $f(w_i,v_j,t)$ and consider the following discretization of the nonlinear Fokker-Planck equation \eqref{eq:NAD_dim}  
\be\label{eq:eqflux2D}
\frac{d}{dt} f_{i,j} = \dfrac{\F_{i+1/2,j}[f]-\F_{i-1/2,j}[f]}{\Delta w}+\dfrac{\F_{i,j+1/2}[f]-\F_{i,j-1/2}[f]}{\Delta v},
\ee
being $\F_{i\pm1/2,j}[f]$, $\F_{i,j\pm1/2}[f]$ flux functions characterizing the numerical discretization. The quasi-stationary approximations over the cell $[w_i,w_{i+1}]\times[v_i,v_{i+1}]$ of the two dimensional problem now read
\[
\begin{split}
\int_{w_{i}}^{w_{i+1}} \dfrac{1}{f(w,v_j,t)}\partial_w f(w,v_j,t) dw&= -\int_{w_{i}}^{w_{i+1}}\dfrac{\B[f](w,v_j,t)+\partial_w D(w,v_j)}{D(w,v_j)}dw, \\
\int_{v_{j}}^{v_{j+1}} \dfrac{1}{f(w_i,v,t)}\partial_v f(w_i,v,t) dv&= -\int_{v_j}^{v_{j+1}}\dfrac{\B[f](w_i,v,t)+\partial_v D(w_i,v)}{D(w_i,v)}dv.
\end{split}
\]
Therefore, setting
\be\begin{split}
\tilde{\C}_{i+1/2,j} &= \dfrac{D_{i+1/2,j}}{\Delta w}\int_{w_i}^{w_{i+1}}\dfrac{\B[f](w,v_j,t)+\partial_{w}D(w,v_j)}{D(w,v_j)}dw\\
\tilde{\C}_{i,j+1/2} &= \dfrac{D_{i,j+1/2}}{\Delta v}\int_{v_j}^{v_{j+1}}\dfrac{\B[f](w_i,v,t)+\partial_{v}D(w_i,v)}{D(w_i,v)}dv
\end{split}\ee
and by considering the natural generalization of the one-dimensional numerical flux
\be\begin{split}\label{eq:F2D}
\F_{i+1/2,j}[f] &= \tilde{\C}_{i+1/2,j}\tilde{f}_{i+1/2,j}+D_{i+1/2,j}\dfrac{f_{i+1,j}-f_{i,j}}{\Delta w}\\
\tilde f_{i+1/2,j}& = (1-\delta_{i+1/2,j})f_{i+1,j}+\delta_{i+1/2,j}f_{i,j}\\
\F_{i,j+1/2}[f] &= \tilde{\C}_{i,j+1/2}\tilde{f}_{i,j+1/2}+D_{i,j+1/2}\dfrac{f_{i,j+1}-f_{i,j}}{\Delta v}\\
\tilde f_{i,j+1/2}& = (1-\delta_{i,j+1/2})f_{i,j+1}+\delta_{i,j+1/2}f_{i,j},
\end{split}\ee
we define $\delta_{i+1/2,j}$ and $\delta_{i,j+1/2}$ in such a way that we preserve the steady state solution for each dimension. The Chang-Cooper type structure preserving methods are then given by
\be\begin{split}\label{eq:delta2D_1}
\delta_{i+1/2,j}     &= \dfrac{1}{\lambda_{i+1/2,j}}+\dfrac{1}{1-\exp(\lambda_{i+1/2,j})},\\
\lambda_{i+1/2,j} &= \dfrac{\Delta w \tilde{\C}_{i+1/2,j}}{D_{i+1/2,j}}
\end{split}\ee
and
\be\begin{split}\label{eq:delta2D_2}
\delta_{i,j+1/2}     &= \dfrac{1}{\lambda_{i,j+1/2}}+\dfrac{1}{1-\exp(\lambda_{i,j+1/2})},\\
\lambda_{i,j+1/2} &= \dfrac{\Delta v \tilde{\C}_{i,j+1/2}}{D_{i,j+1/2}}.
\end{split}\ee
The cases of higher dimension $d\ge 3$ and entropic average fluxes may be derived in a 
similar way.


\end{document}